\newcommand{\e}{\varepsilon}
\newcommand{\eps}{\varepsilon}
\newcommand{\R}{\mathbb{R}}
\newcommand{\RN}{{\mathbb{R}^N}}
\newcommand{\de}{\partial}
\DeclareMathOperator{\dist}{dist}
 \DeclareMathOperator{\Id}{Id}
\renewcommand{\le}{\leslant}
\renewcommand{\ge}{\geslant}
\renewcommand{\a }{\alpha }
\renewcommand{\b }{\beta }
\renewcommand{\d }{\delta }
\newcommand{\g }{\gamma }
\renewcommand{\l }{\lambda}
\newcommand{\n }{\nabla }
\renewcommand{\t}{\theta}
\newcommand{\G}{\Gamma}
\renewcommand{\H}{H^1(\RN)}
\newcommand{\Hr}{H^1_r(\RN)}
\newcommand{\N}{\mathbb{N}}
\newcommand{\irn }{\int_{\RN}}
\def\bbm[#1]{\mbox{\boldmath $#1$}}
\newcommand{\beq }{\begin{equation}}
\newcommand{\eeq }{\end{equation}}
\newcommand{\Ce}{\mathcal{C}_\eps}
\renewcommand{\le}{\leqslant}
\renewcommand{\ge}{\geqslant}
\newtheorem{theorem}{Theorem}[section]
\newtheorem{lemma}[theorem]{Lemma}
\newtheorem{proposition}[theorem]{Proposition}
\title[Semi-classical states for the NLSE]{Semi-classical states for the Nonlinear Schr{\"o}dinger Equation on saddle points of the potential via variational methods}
\author[d'Avenia]{Pietro d'Avenia$^1$}
\author[Pomponio]{Alessio Pomponio$^1$}
\address{$^1$Dipartimento di Matematica, Politecnico di
Bari, Via E. Orabona 4, I-70125 Bari, Italy.}
\author[Ruiz]{David Ruiz$^2$}
\address{$^2$Dpto. An{\'a}lisis Matem{\'a}tico, Granada, 18071 Spain.}
\thanks{P. D. and A. P. are supported by M.I.U.R. - P.R.I.N.
``Metodi variazionali e topologici nello studio di fenomeni non
lineari'' and by GNAMPA Project ``Problemi ellittici con termini
non locali''. D.R has been supported by the Spanish Ministry of
Science and Innovation under Grant MTM2008-00988 and by J.
Andaluc\'{\i}a (FQM 116). Moreover, the three authors have been
supported by the Spanish-Italian Acci{\'o}n Integrada
HI2008.0106/Azione Integrata Italia-Spagna IT09L719F1.}
\email{p.davenia@poliba.it, a.pomponio@poliba.it, daruiz@ugr.es}
\date{}
\keywords{Nonlinear Schr{\"o}dinger Equation, Semiclassical states,
Variational Methods.}
\subjclass[2010]{35J20, 35B40.}
\begin{document}

%

\maketitle

\begin{abstract}
In this paper we study semiclassical states for the problem
$$ -\eps^2 \Delta u + V(x) u = f(u) \qquad \hbox{in } \RN,$$
where $f(u)$ is a superlinear nonlinear term. Under our hypotheses
on $f$ a Lyapunov-Schmidt reduction is not possible. We use
variational methods to prove the existence of spikes around saddle
points of the potential $V(x)$.
\end{abstract}

\section{Introduction}

%
%
%
%
%
%
%

Our starting point is the equation of the standing waves for the
Nonlinear Schr{\"o}dinger Equation:
\begin{equation}
\label{eq:Sch} -\eps^2 \Delta u + V(x) u = f(u) \qquad \hbox{in }
\RN.
\end{equation}
Here $u \in H^1(\R^N)$, $N \ge 2$, $V(x)$ is a positive potential
and $f$ is a nonlinear term. This problem has been largely studied
in the literature, and it is not possible to give here a complete
bibliography.

The existence of solutions for \eqref{eq:Sch} has been treated in
\cite{beres-lions, strauss} for constant potentials and
\cite{bahri-lions, benci-cerami, ding-ni, rabi} in more general
cases. An interesting issue concerning \eqref{eq:Sch} is the
existence of semiclassical states, which implies the study of
\eqref{eq:Sch} for small $\e>0$. From the point of view of
Physics, semiclassical states describe a kind of transition from
Quantum Mechanics to Newtonian Mechanics. In this framework one is
interested not only in existence of solutions but also in their
asymptotic behavior as $\e \to 0$.  Typically, solutions tend to
concentrate around critical points of $V$: such solutions are
called \emph{spikes}.

The first result in this direction was given by Floer and
Weinstein in \cite{fw}, where the case $N=1$ and $f(u)=u^3$ is
considered. Later, Oh generalized this result to higher values of
$N$ and $f(u)=u^p$, $1<p<\frac{N+2}{N-2}$, see \cite{oh1, oh2}. In
those papers existence of \emph{spikes} around any non-degenerate
critical point $x_0$ of $V(x)$ is proved. Roughly speaking, a
spike is a solution $u_{\e}$ such that:
$$ u_{\e} \sim U \left (\frac{x-x_0}{\e} \right ) \ \mbox{ as }
\e \to 0,$$ where $U$ is a ground state solution of the limit
problem:
\begin{equation} \label{eq:limit} - \Delta U + V(x_0) U = f(U).\end{equation}

Let us point out here that not any critical point of $V(x)$ will
generate a spike around it: for instance, it has been proved in
\cite{esteban, esteban2} that \eqref{eq:Sch} has no non-trivial
solution if $V(x)$ is decreasing along a direction (and different
from constant). However, \cite{abc, li} extended the previous
result to some possibly degenerate critical points of $V$.

All those results (\cite{abc, fw, li, oh1, oh2}) use the following
non-degeneracy condition for \eqref{eq:limit}:

\begin{enumerate}[label=(ND), ref=(ND)]

\item \label{ND} The vector space of solutions of $- \Delta w +
V(x_0) w = f'(U)w$ is generated by $\{ \partial_{x_i} U,\ i=1
\dots N.\}$.
\end{enumerate}

\medskip

This property is essential in their approach since they use a
Lyapunov-Schmidt reduction which is based on the study of the
linearized problem. The argument of the proof of \ref{ND} (see for
instance \cite{am-book}, Chapter 4) needs a non-existence result
for ODE's that has been proved only for specific types of
nonlinearities, like powers (see \cite{kwong}).

A first attempt to generalize such result without assuming
\ref{ND} was given in \cite{dpf1} (see also \cite{gui}), which was
later improved by \cite{dpf2, dpf3}. Here the procedure is
completely different, and uses a variational approach applied to a
truncated problem. In those papers the following hypotheses are
made on $f$:

\begin{enumerate}[label=(f\arabic*), ref=(f\arabic*)]
\setcounter{enumi}{-1} \item \label{f0} $f:[0, +\infty) \to \R$ is
$C^1$;

\item \label{f1} $f(s)=o(s)$ as $s \sim 0$;

\item \label{f2} $ \lim_{s\to +\infty} \frac{f(s)}{s^p}=0$ for
some $p \in (1, \frac{N+2}{N-2})$ if $N\ge 3$, or just $p >1$ if
$N=2$;

\item \label{f3} there exists $\mu>2$ such that, for every $s>0$,
\[
0<\mu F(s) < sf(s),
\]
where $F(s)=\int_0^s f(t) dt$;

\item \label{f4} the map $t \mapsto \frac{f(t)}{t}$ is
non-decreasing.
\end{enumerate}

The first two conditions imply that $f$ is superlinear and
sub-critical, and are quite natural in this framework. Condition
\ref{f3} is the so-called Ambrosetti-Rabinowitz condition, which
has been imposed many times in order to deal with superlinear
problems. Finally, condition \ref{f4} is suitable for using a
Nehari manifold approach.

Under those conditions, \cite{dpf3} shows the existence of spikes
around critical points of $V(x)$ under certain conditions. Roughly
speaking, the critical points considered are those that can be
found through a local min-max approach; this is a very general
assumption and includes of course any non-degenerate critical
point.

Recently, some papers have tried to eliminate some of the
conditions \ref{f3}-\ref{f4}, or to substitute them with other
assumptions. For instance, in \cite{bw, jt-cv} condition \ref{f4}
is removed (moreover, \cite{jt-cv} deals also with asymptotically
linear problems, where \ref{f3} is replaced with another
condition). In \cite{avila-j, jb-arma, jbt-cpde} both conditions
\ref{f3} and \ref{f4} are eliminated, and the authors assume the
minimal hypotheses under which one can prove the existence of
solution for \eqref{eq:limit} (those of \cite{beres-lions}).
However, in \cite{avila-j, jb-arma, jbt-cpde, bw, jt-cv} only the
case of local minima of $V(x)$ is considered.

The goal of this paper is to prove existence of spikes around
saddle points or maxima of $V(x)$ without assumption \ref{f4}. Our
approach is reminiscent of \cite{dpf3}; basically, we define a
conveniently modified energy functional and try to prove existence
of solution by variational methods. The main difference with
respect to \cite{dpf3} is that, since \ref{f4} is not assumed, the
Nehari manifold technique is not applicable here. So, we need to
construct a different min-max argument, which involves suitable
deformations of certain cones in $H^1(\R^N)$. This approach seems
very natural but has not been used before in the related
literature. As a second novelty, a classical property of the
Brouwer degree regarding the existence of connected sets of
solutions reveals crucial to estimate the critical values (see
\cite{leray-s, mawhin}). Indeed, this property allows us to relate
our min-max value to another min-max value with the constraint of
having center of mass equal to $0$ (see Section \ref{The min-max
argument} for a more detailed exposition).

Finally, once a solution is obtained, asymptotic estimates are
needed in order to prove that the solution of the modified problem
solves \eqref{eq:Sch}.

We assume that $V:\RN \to \R$ is a function satisfying the
following boundedness condition:

\begin{enumerate}[label=(V0), ref=(V0)]

\item \label{V0} $0<\a_1 \le V(x) \le \a_2$, for all $x \in \RN$;

\end{enumerate}

Moreover, with respect to the critical point $0$, we assume that
one of the following conditions is satisfied:

\begin{enumerate}[label=(V\arabic*), ref=(V\arabic*)]

\item \label{V1} $V(0)=1$, $V$ is $C^1$ in a neighborhood of $0$
and $0$ is an isolated local maximum of $V$.

\item \label{V2} $V(0)=1$, $V$ is $C^2$ in a neighborhood of $0$
and $0$ is a non-degenerate saddle critical point of $V$.

\item \label{V3} $V(0)=1$, $V$ is $C^{N-1}$ in a neighborhood of
$0$, $0$ is an isolated critical point of $V(x)$ and there exists
a vector space $E$ such that:

\begin{enumerate}[label=\alph*), ref=\alph*)]

\item $V|_E$ has a local maximum at $0$;

\item $V|_{E^\perp}$ has a local minimum at $0$.

\end{enumerate}

\end{enumerate}

Our assumptions on the critical points of $V$ are not as general
as in \cite{dpf3}, but still include non-degenerate cases, as well
as isolated maxima and many degenerate cases.

Our main theorem is the following:

\begin{theorem} \label{teo} Assume that $f$ satisfies hypotheses \ref{f0}, \ref{f1}, \ref{f2},
\ref{f3}, and that $V$ satisfies \ref{V0} and one of \ref{V1},
\ref{V2} or \ref{V3}. Then there exists $\e_0>0$ such that
\eqref{eq:Sch} admits a positive solution $u_{\e}$ for $\e \in (0,
\e_0)$. Moreover, there exists $\{y_{\e}\} \subset \R^N$ such that
$\e y_{\e} \to 0$ and:
$$   u_{\e}(\e ( \cdot + y_{\e}))  \to U \mbox{ in } H^1(\R^N),  $$
where $U$ is a ground state solution for
\begin{equation*}
 - \Delta U + U = f(U).
\end{equation*}

\end{theorem}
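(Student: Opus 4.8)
The proof of Theorem \ref{teo} combines a variational construction with asymptotic analysis. First I would perform the standard semiclassical scaling $x \mapsto \e x$, so that \eqref{eq:Sch} becomes $-\Delta u + V(\e x) u = f(u)$ on $\RN$, whose solutions are critical points of the functional
\[
J_\e(u) = \frac12 \irn \left( |\n u|^2 + V(\e x) u^2 \right) - \irn F(u).
\]
The difficulty is that $J_\e$ does not satisfy Palais--Smale globally (the potential is not coercive, $V(0)=1$ is only a saddle value, not a minimum), so I would replace $f$ by a penalized nonlinearity $\tilde f$ which is modified outside a large ball so that the modified functional $\tilde J_\e$ does satisfy the compactness conditions; this is the del Pino--Felmer device as adapted in \cite{dpf3}. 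Working with $\tilde J_\e$ one then has to produce a critical point whose max-location $\e y_\e$ converges to $0$ and which, after recentering, converges to a ground state of the limit problem $-\Delta U + U = f(U)$; the final step is to check via elliptic estimates that this solution actually avoids the region where the penalization acts, hence solves the original \eqref{eq:Sch}.

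\textbf{The min-max scheme.}
The heart of the argument is constructing the critical level of $\tilde J_\e$ by a min-max over deformations of cones in $\H$, as announced in the introduction. I would fix a ground state $U$ of the limit problem and, for $y$ ranging over a small neighborhood of $0$ (respectively over a neighborhood of $0$ in $E$ in case \ref{V3}), consider initial maps of the form $y \mapsto t_y\, U(\cdot - y/\e)$ with $t_y>0$ chosen appropriately; the dimension of the parameter space matches the Morse index data in \ref{V1}--\ref{V3} (full dimension $N$ for a maximum, $\dim E$ for the saddle). One defines
\[
c_\e = \inf_{\eta \in \Gamma_\e} \ \max_{y} \ \tilde J_\e(\eta(y)),
\]
where $\Gamma_\e$ is a family of admissible deformations of this initial cone that fix the boundary. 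The geometric separation needed for the min-max (the fact that the max over any deformed cone stays above a level that is strictly below the value on the initial cone) is obtained by comparing $c_\e$ with $\e^N$ times the mountain-pass level $c_\infty$ of the limit functional, using \ref{f3} for the needed compactness and uniform estimates; one shows
\[
\e^N m_\e \le c_\e \le \e^N m_\e + o(\e^N), \qquad m_\e = \min_y J_{\e}\big(t_y U(\cdot - y/\e)\big),
\]
and that $m_\e \to c_\infty$ as $\e \to 0$.

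\textbf{The degree argument and the main obstacle.}
The subtle point — and the one I expect to be the main obstacle — is proving that $c_\e$ is in fact a critical value realized by a spike centered near $0$, rather than degenerating (escaping to infinity or collapsing). Here I would invoke the Leray--Schauder-type connectedness property of the Brouwer degree \cite{leray-s, mawhin}: if $c_\e$ were not critical, a deformation along $-\n \tilde J_\e$ would push the whole deformed cone below $c_\e$, but a degree-theoretic obstruction — coming from the fact that the center-of-mass map, restricted to near-minimizers of the max, is homotopic to the identity on the parameter sphere — forbids this. Concretely one relates $c_\e$ to the constrained min-max value $c_\e^*$ where the profiles are forced to have center of mass $0$; the connected-set property of the degree yields, for each almost-optimal deformation, a point with barycenter $0$ on which $\tilde J_\e$ is $\ge c_\e^* \ge c_\e - o(\e^N)$, so $c_\e = c_\e^*$ and the latter is attainable. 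Having a critical point $u_\e$ at level $c_\e^*$ with barycenter $0$, a concentration-compactness / Lions-type argument (no vanishing thanks to the level being bounded below by $\sim \e^N c_\infty > 0$, no dichotomy thanks to \ref{f3} and the energy estimate being asymptotically that of a single bubble) gives $u_\e(\cdot + y_\e) \to U$ in $\H$ with $\e y_\e \to 0$; then $L^\infty$ decay estimates (De Giorgi--Nash--Moser plus a comparison function) show $u_\e$ is small where the truncation $\tilde f \ne f$ lives, so $u_\e$ solves \eqref{eq:Sch}, completing the proof.
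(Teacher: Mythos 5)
Your overall skeleton (rescale, del Pino--Felmer penalization, cone min-max, degree/connectedness, concentration-compactness, $L^\infty$ estimates to remove the truncation) is the right one, but two of your key steps have genuine gaps precisely where the paper has to work hardest. First, your initial min-max surface $y\mapsto t_y\,U(\cdot-y/\e)$ with ``$t_y$ chosen appropriately'' is a Nehari-type fibering: to guarantee that the maximum of the energy along the ray $t\mapsto tU$ equals the ground-state level $m$ you need the monotonicity hypothesis \ref{f4}, which is exactly the assumption this theorem drops. Without \ref{f4} one may have $\max_{t>0}\Phi(tU)>m$, and then the upper estimate $\limsup_{\e\to0}(\hbox{min-max level})\le m$ (Proposition \ref{estima}), which must match the lower bound, fails. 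The paper's substitute is the cone $\Ce$ built over the Jeanjean--Tanaka path $\gamma_t$ of Lemma \ref{l:gamma} (a dilation path, not scalar multiplication), along which the maximum is exactly $m$; this is not a cosmetic difference but the reason the scheme survives without \ref{f4}. (Relatedly, your $\e^N$ normalizations are inconsistent with the rescaled functional you wrote down, where all levels are $O(1)$.)

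Second, the heart of the proof is the lower bound for the barycenter-constrained level (Proposition \ref{fund} via Proposition \ref{pr:b>=m}), and your proposal asserts it rather than proves it. The constrained near-optimal sets produce critical points of a problem with a Lagrange multiplier, $-\Delta u_\e+V(\e x)u_\e=g_\e(x,u_\e)+\l_\e\cdot h_\e(x)u_\e$ (Lemma \ref{le:lambda}); one must show $\l_\e=O(\e)$ by Esteban--Lions type integral identities (Lemma \ref{le:Oeps}) and then run a delicate splitting analysis (including the case $\bar\l\neq0$, with half-spaces and reflections) to conclude that the limiting concentration point $\bar y$ lies in $E^\perp$, where $V(\bar y)\ge V(0)=1$, so that monotonicity of $k\mapsto m_k$ (Lemma \ref{le:cm}) gives the bound by $m$. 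None of this is visible in your ``no vanishing, no dichotomy'' argument, which does not explain why the barycenter constraint pushes the level up to $m$. Finally, your last step presumes the spike sits near $0$, but since $0$ is a saddle there are nearby points with $V<1$, and excluding concentration on $\partial B_1$ or in the annulus where the cutoff acts requires the translation/Pohozaev identities of Proposition \ref{hola} together with the Sard-lemma choice of $R_1$ in \eqref{elenuzza}; only then do the $L^\infty$ estimates show the truncation is inactive and $u_\e$ solves \eqref{eq:Sch}.
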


This result can be compared with \cite{dpf3, jb-arma} as follows.
In \cite{dpf3} more general critical points of the potential
$V(x)$ are considered, but condition \ref{f4} is assumed. On the
other hand, the hypotheses on $f$ of \cite{jb-arma} are less
restrictive than ours, but \cite{jb-arma} considers only local
minima of $V$.

The rest of the paper is organized as follows. In Section
\ref{Preliminaries} we will give some preliminary results, most of
them well-known, related to some autonomous limit problems. We
will also define the truncation of the problem that will be used
throughout the paper. The min-max argument is exposed in Section
\ref{The min-max argument}. There we will prove the main estimate
needed for our argument, stated in Proposition \ref{fund}. This
estimate will imply the existence of a solution for the truncated
problem. In Section \ref{Asymptotic behavior} some asymptotic
estimates on the solutions will be given: in particular we will
show that the solutions of the truncated problem actually solve
our original problem. Finally, in Section \ref{appendix} some
possible extensions of our result will be briefly commented, and
some technicalities are explained in detail.

\subsection*{Acknowledgement} This work has been partially carried out during a stay of Pietro d'Avenia and Alessio Pomponio in Granada.
They would like to express their deep gratitude to the
Departamento de An{\'a}lisis Matem{\'a}tico for the support and warm
hospitality.

\section{Preliminaries} \label{Preliminaries}
In this section we will give some preliminary definitions and
results that will be used in our arguments. First, we will define
a certain truncation of $f(u)$, and establish the basic properties
of the related problem. After that, we will address the study of
certain limit problems that will appear naturally in later proofs.

Let us first fix some notation. In $\R^N$, $B(x,R)$ will denote
the usual euclidean ball centered at $x \in \R^N$ and with radius
$R>0$. Given any set $\Lambda \subset \R^N$, its complement is
denoted by $\Lambda^c$. Moreover, for any $\e>0$, we write:
$$ \Lambda^\eps := \e^{-1} \Lambda = \left\{ x\in\RN\;\vline\; \eps x \in
\Lambda \right\}. $$

In what follows we will denote by $\| \cdot \|$ the usual norm of
$\H$: other norms, like Lebesgue norms, will be indicated with a
subscript. If nothing is specified, strong and weak convergence of
sequences of functions are assumed in the space $\H$.

In our estimates, we will frequently denote by $C>0$, $c>0$ fixed
constants, that may change from line to line, but are always
independent of the variable under consideration. We also use the
notations $O(1), o(1), O(\e), o(\e)$ to describe the asymptotic
behaviors of quantities in a standard way.

\subsection{The truncated problem} By making the change of variable $x \mapsto \e x$,
problem \eqref{eq:Sch} becomes:
\begin{equation}
\label{eq:11} - \Delta u + V(\e x) u = f(u) \qquad \hbox{in } \RN.
\end{equation}

In what follows it will be useful to extend $f(u)$ as $0$ for
negative values of $u$. Observe that, by the maximum principle,
any nontrivial solution of \eqref{eq:11} will be positive, so that
we come back to our original problem.

It is well-known that solutions of \eqref{eq:11} correspond to
critical points of the functional $I_{\e}:\H \to \R$,
\[
I_\eps (u) = \frac{1}{2}\irn |\n u|^2 + \frac{1}{2} \irn V(\eps x)
u^2 - \irn F(u).
\]
However, we will not deal with \eqref{eq:11} and $I_{\e}$
directly. First, we will use a convenient truncation of the
nonlinear term $f(u)$, in the line of \cite{dpf1, dpf2, dpf3,
jt-cv}. The idea is to localize the problem around $0$, so that
the energy functional becomes coercive far from the origin. By
using min-max arguments we will find a solution of the truncated
problem. In Section \ref{Asymptotic behavior} we will show that
such solution actually solves \eqref{eq:11}.

Let us define:
\[
\tilde{f}(s)= \left\{
\begin{array}{lll}
\min\{f(s),as\} & & s\ge 0\\
0 & &s<0
\end{array}
\right.
\]
with
\begin{equation} \label{eq:a}
0< a<\left(1-\frac{2}{\mu}\right)\a_1.
\end{equation}

We also define the primitive $\tilde{F}(s)=\int_0^s \tilde{f}(t)
dt$.

In the following we will consider the balls
$B_i:=B(0,R_i)\subset\RN$ ($i=0,\ldots,4$) with $R_i <R_{i+1}$ for
$i=0,1,2,3$, where $R_i$ are small positive constants to be
determined. For technical reasons, we will choose $R_1$ such that:
\begin{equation} \label{elenuzza} \forall x \in \partial
B_1 \mbox{ with } V(x)=1, \partial_{\tau}V(x) \neq 0, \mbox{ where
} \tau \mbox{ is tangent to } \partial B_1 \mbox{ at }x.
\end{equation}

In cases \ref{V1} and \ref{V2} it is clear that such a choice is
possible. In case \ref{V3} this is also true, see Proposition
\ref{sard} in the Appendix. Observe that this is the unique point
where the $C^{N-1}$ regularity of $V$ is needed in case \ref{V3}.

Next we define $\chi : \R^N \to \R$,
\begin{equation} \label{def:chi}
\chi(x)= \left \{
\begin{array}{ll}
 1 & x \in B_1, \\
 \frac{R_2 - |x|}{R_2-R_1} & x \in B_2 \setminus B_1, \\
 0 & x \in B_2^c,
\end{array}
\right.
\end{equation}
and then
\begin{align*}
g(x,s)=&\chi(x) f(s) + \left( 1- \chi(x)\right) \tilde{f}(s),\\
G (x,s):=&\int_0^s g(x,t) dt = \chi(x) F(s) + \left( 1-
\chi(x)\right) \tilde{F}(s).
\end{align*}

We denote with subscripts the dilation of the previous functions;
being specific,
\[\chi_\eps(x)=\chi(\eps x), \] and
\[
g_\eps (x,s):= g(\e x, s)= \chi_\eps(x) f(s) + \left( 1-
\chi_\eps(x)\right) \tilde{f}(s).
\]

So, in this section we consider the truncated problem:
\begin{equation}
\label{eq:truncated} - \Delta u + V(\e x) u = g_{\e}(x,u) \qquad
\hbox{in } \RN.
\end{equation}

As mentioned above, we will find solutions of \eqref{eq:truncated}
as critical points of the associated energy functional
$\tilde{I}_{\e}:\H \to \R$, which is defined as:
\[
\tilde{I}_\eps (u) = \frac{1}{2}\irn |\n u|^2 + \frac{1}{2} \irn
V(\eps x) u^2 - \irn G_\eps(x,u),
\]
with
\[
G_\eps (x,s):=\int_0^s g_\eps (x,t) dt = \chi_\eps(x) F(s) +
\left( 1- \chi_\eps(x)\right) \tilde{F}(s).
\]

In the next lemma we collect some properties of the functions
defined above that will be of use in our reasonings:

\begin{lemma} \label{properties}

There holds:
\begin{enumerate}[label=($\tilde{{\rm f}}$\arabic*), ref=($\tilde{{\rm f}}$\arabic*)]

\item \label{en:ft2} $\tilde{F}(s) \le \min \{ \frac{1}{2} a s^2,\
F(s)\}$; \item \label{en:ft4} there exists $r>0$ such that
$\tilde{f}(s)=f(s)$ for $s \in(0,r)$;
\end{enumerate}

\begin{enumerate}
[label=(g\arabic*), ref=(g\arabic*)] \item \label{en:g1}
$G_\eps(x,s)\le F(s)$ for all $(x,s)\in
\mathbb{R}^N\times\mathbb{R}$; \item \label{en:g2}
$g_\eps(x,s)=f(s)\ $ if $|s|<r$ or $\ x \in \Lambda_1^{\e}$;
\end{enumerate}

\begin{enumerate}[label=(fg), ref=(fg)]

\item \label{en:fg} for any $\delta>0$ there exists $\ C_\d
>0$ such that:
$$ |f(s) |\le \d |s| + C_\d |s|^p,  $$
and the same assertion also holds for $\tilde{f}(s)$,
$g_{\e}(x,s)$.
\end{enumerate}

\end{lemma}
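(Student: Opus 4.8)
The statement to prove is Lemma~\ref{properties}, which collects elementary properties of the truncated nonlinearity $\tilde f$ and of the glued nonlinearity $g_\e$. I will verify the assertions one at a time, since each is a direct consequence of the definitions and of hypotheses \ref{f0}--\ref{f3}.

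For \ref{en:ft2}, recall that $\tilde f(s) = \min\{f(s), as\}$ for $s \ge 0$ and $\tilde f(s)=0$ for $s<0$. For $s \ge 0$ this gives $\tilde f(t) \le at$ and $\tilde f(t) \le f(t)$ for all $t \in [0,s]$; integrating both inequalities from $0$ to $s$ yields $\tilde F(s) \le \frac12 a s^2$ and $\tilde F(s) \le F(s)$, hence $\tilde F(s) \le \min\{\frac12 a s^2, F(s)\}$. For $s<0$ all three quantities vanish (note $F(s)=0$ there since we extended $f$ by $0$). For \ref{en:ft4}, by \ref{f1} we have $f(s)/s \to 0$ as $s \to 0^+$, so there is $r>0$ with $f(s) < as$ for $s \in (0,r)$; on that interval the minimum defining $\tilde f$ is attained by $f(s)$, so $\tilde f(s)=f(s)$.

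For the properties of $g_\e$: since $g_\e(x,s) = \chi_\e(x) f(s) + (1-\chi_\e(x)) \tilde f(s)$ is a convex combination (because $0 \le \chi_\e \le 1$) and $\tilde f(s) \le f(s)$ for $s\ge 0$ while both equal $0$ for $s<0$, the corresponding primitives satisfy the same convex-combination identity; using $\tilde F(s) \le F(s)$ from \ref{en:ft2} we get $G_\e(x,s) \le \chi_\e(x)F(s) + (1-\chi_\e(x))F(s) = F(s)$, which is \ref{en:g1}. For \ref{en:g2}, if $x \in \Lambda_1^\e$ then $\e x \in \Lambda_1$, hence $\chi_\e(x)=\chi(\e x)=1$ by \eqref{def:chi}, so $g_\e(x,s)=f(s)$; and if $|s|<r$ then $\tilde f(s)=f(s)$ by \ref{en:ft4} (and both vanish for $-r<s\le 0$), so again $g_\e(x,s)=\chi_\e(x)f(s)+(1-\chi_\e(x))f(s)=f(s)$ regardless of $x$. (Here I read $\Lambda_1$ as the ball $B_1$, consistently with the notation $\Lambda^\e=\e^{-1}\Lambda$ and the definition of $\chi$.)

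Finally, \ref{en:fg} is the standard subcritical growth estimate. Fix $\delta>0$. By \ref{f1} there is $s_0>0$ with $|f(s)| \le \delta |s|$ for $|s| \le s_0$; by \ref{f2} there is $s_1 \ge s_0$ with $|f(s)| \le |s|^p$ for $|s| \ge s_1$; on the compact range $s_0 \le |s| \le s_1$ the continuous function $|f(s)|/|s|^p$ is bounded, say by $C_\delta'$. Combining, $|f(s)| \le \delta|s| + C_\delta |s|^p$ with $C_\delta = \max\{1, C_\delta'\}$. The same bound then holds for $\tilde f(s)$, since $|\tilde f(s)| \le |f(s)|$, and for $g_\e(x,s)$, since it is a convex combination of $f(s)$ and $\tilde f(s)$ and hence $|g_\e(x,s)| \le \max\{|f(s)|,|\tilde f(s)|\} \le |f(s)|$. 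There is no real obstacle here; the only point requiring a little care is bookkeeping the definition of $\tilde f$ and the sign convention (extension of $f$ by $0$ on the negative axis) so that all inequalities hold on the whole real line and not merely for $s\ge 0$.
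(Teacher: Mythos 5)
Your proof is correct and follows exactly the route the paper intends: the paper simply states that \ref{en:ft2}--\ref{en:g2} follow immediately from the definitions of $\tilde f$ and $g_\e$, and that \ref{en:fg} follows from \ref{f1} and \ref{f2}, which is precisely what you verify in detail (including the correct reading of $\Lambda_1$ as $B_1$). No gaps; your write-up just makes the omitted elementary checks explicit.
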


\begin{proof}
Properties \ref{en:ft2}, \ref{en:ft4}, \ref{en:g1}, \ref{en:g2}
follow immediately from the definitions of $\tilde{f}$ and
$g_{\e}$. Finally, property \ref{en:fg} follows from the
assumptions \ref{f1} and \ref{f2} made on $f$.

\end{proof}


\begin{proposition}
\label{pr:PS} For every $\eps>0$, the functional $\tilde{I}_\eps$
satisfies the Palais-Smale condition.
\end{proposition}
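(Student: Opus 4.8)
The plan is to verify the Palais–Smale condition for $\tilde I_\eps$ directly, at an arbitrary fixed level $c\in\R$: take a sequence $(u_n)\subset\H$ with $\tilde I_\eps(u_n)\to c$ and $\tilde I_\eps'(u_n)\to 0$ in $H^{-1}(\RN)$, and show it has a convergent subsequence. The key point that makes the truncated functional better behaved than $I_\eps$ is that, outside the ball $B_2^\eps$, the nonlinearity is $\tilde f$, which by \ref{en:ft2} satisfies $\tilde F(s)\le \tfrac12 a s^2$ with $a<(1-2/\mu)\alpha_1$; this is precisely what restores coercivity of the quadratic-minus-subquadratic part far from the origin. So the first step is boundedness.

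\medskip

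\textbf{Step 1 (boundedness of the PS sequence).} The standard trick is to estimate $\tilde I_\eps(u_n)-\tfrac1\mu\langle\tilde I_\eps'(u_n),u_n\rangle$. Using \ref{f3} one has $F(s)\le\tfrac1\mu f(s)s$ for the part of the integrals where the nonlinearity is genuinely $f$ (i.e.\ on $B_2^\eps$ and where $f(s)\le as$), while on the region where the truncation is active one uses $\tilde F(s)\le\tfrac12 as^2$ and $\tilde f(s)s\ge 0$. Splitting $\RN$ accordingly, one gets
\[
\tilde I_\eps(u_n)-\frac1\mu\langle\tilde I_\eps'(u_n),u_n\rangle
\ \ge\ \Bigl(\frac12-\frac1\mu\Bigr)\|\nabla u_n\|_2^2
+\Bigl(\frac12-\frac1\mu\Bigr)\irn V(\eps x)u_n^2 - \frac{a}{2}\,\Bigl(1-\frac{2}{\mu}\Bigr)^{-1}\cdots
\]
— more cleanly, on the truncation region the bad term is at most $\bigl(\tfrac12-\tfrac1\mu\bigr)a\,u_n^2$, which is absorbed by $\bigl(\tfrac12-\tfrac1\mu\bigr)V(\eps x)u_n^2\ge\bigl(\tfrac12-\tfrac1\mu\bigr)\alpha_1 u_n^2$ thanks to \eqref{eq:a} and \ref{V0}. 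Hence the left side dominates $c'\|u_n\|^2$ for some $c'>0$, while the left side is also $\le c+1+\|\tilde I_\eps'(u_n)\|\,\|u_n\|/\mu = O(1)+o(1)\|u_n\|$; therefore $(u_n)$ is bounded in $\H$.

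\medskip

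\textbf{Step 2 (passage to the limit).} Up to a subsequence, $u_n\weakto u$ in $\H$, $u_n\to u$ in $L^q_{loc}(\RN)$ for $q<2^*$, and $u_n\to u$ a.e. Write $\langle\tilde I_\eps'(u_n)-\tilde I_\eps'(u),u_n-u\rangle$; the quadratic part gives $\|\nabla(u_n-u)\|_2^2+\irn V(\eps x)(u_n-u)^2$, which by \ref{V0} is equivalent to $\|u_n-u\|^2$, so it suffices to show the nonlinear remainder $\irn\bigl(g_\eps(x,u_n)-g_\eps(x,u)\bigr)(u_n-u)$ tends to $0$. The difficulty here, and the reason one cannot simply quote a compact embedding, is that $\RN$ is unbounded — the loss of compactness at infinity is the main obstacle. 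It is handled using the structure of $g_\eps$: for $|x|>R_2/\eps$ one has $g_\eps(x,s)=\tilde f(s)$ with $|\tilde f(s)|\le a|s|$, so by \ref{en:fg} and the boundedness of $(u_n)$ the tail contribution $\int_{|x|>R}|\,g_\eps(x,u_n)-g_\eps(x,u)\,|\,|u_n-u|$ can be made arbitrarily small, uniformly in $n$, by choosing $R$ large — exactly the standard tail estimate (choose $R>R_2/\eps$, use Hölder together with $|\tilde f(u_n)|\le a|u_n|$, and the fact that $\|u_n-u\|$ is bounded; a Brezis–Lieb / uniform-integrability argument gives that the $L^2$ tail of $u_n-u$ is small). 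On the complementary ball $|x|\le R$, the local strong $L^p$ convergence together with \ref{en:fg} and the continuity of $g_\eps(x,\cdot)$ gives $\int_{|x|\le R}\bigl(g_\eps(x,u_n)-g_\eps(x,u)\bigr)(u_n-u)\to 0$ by dominated convergence (the integrands are dominated, via \ref{en:fg}, by $C(|u_n|+|u|)+C(|u_n|^p+|u|^p)$ times $|u_n-u|$, controlled in $L^1_{loc}$ by the bounded $L^2$ and $L^{p+1}$ norms). Combining, $\langle\tilde I_\eps'(u_n)-\tilde I_\eps'(u),u_n-u\rangle\to 0$; since the left-hand side also equals $\langle\tilde I_\eps'(u_n),u_n-u\rangle-\langle\tilde I_\eps'(u),u_n-u\rangle$ with the first term $o(1)$ (as $\tilde I_\eps'(u_n)\to 0$ and $u_n-u$ bounded) and the second $o(1)$ (weak convergence), we conclude $\|u_n-u\|^2=\langle(\tilde I_\eps'(u_n)-\tilde I_\eps'(u)),u_n-u\rangle+o(1)\to 0$, i.e.\ $u_n\to u$ strongly in $\H$.

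\medskip

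\textbf{Main obstacle.} As indicated, the only genuinely delicate point is the lack of compactness of the embedding $\H\hookrightarrow L^q(\RN)$; everything hinges on the fact that the truncation forces $g_\eps(x,\cdot)$ to be globally sublinear (controlled by $a\,|s|$ with $a$ small relative to $\alpha_1$) outside a fixed ball, which both secures boundedness in Step 1 and lets one discard the tail in Step 2. Note that $\eps>0$ is fixed throughout, so the radius $R_2/\eps$ of the "good" region is finite but fixed, which is all we need; no uniformity in $\eps$ is claimed at this stage.
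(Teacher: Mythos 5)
Your Step 1 is fine and coincides with the paper's argument: estimating $\tilde I_\eps(u_n)-\tfrac1\mu\tilde I_\eps'(u_n)[u_n]$, using \ref{f3} where the nonlinearity is $f$ and $\tilde F(s)\le\tfrac a2 s^2$, $\tilde f(s)s\ge 0$ where it is $\tilde f$, and absorbing via \eqref{eq:a} and \ref{V0} gives boundedness exactly as in the paper.

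Step 2, however, has a genuine gap precisely at the crucial point. You reduce matters to showing that the exterior term $\int_{|x|>R}\big(g_\eps(x,u_n)-g_\eps(x,u)\big)(u_n-u)$ is small uniformly in $n$ for $R$ large, and you justify this by saying that ``a Brezis--Lieb / uniform-integrability argument gives that the $L^2$ tail of $u_n-u$ is small.'' This is not true for a general bounded, weakly convergent sequence in $H^1(\R^N)$: a sequence of the form $u+\varphi(\cdot-ne_1)$ is bounded, converges weakly to $u$, yet its $L^2$ mass outside any fixed ball does not vanish. Brezis--Lieb only splits norms; it gives no tightness. With only $|\tilde f(s)|\le a|s|$, H\"older and boundedness, the exterior term is bounded by a constant times $a$, not small. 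Ruling out mass escaping to infinity must use the fact that $u_n$ is a Palais--Smale sequence for the \emph{truncated} functional together with $a<\alpha_1$; this is exactly what the paper does, by testing $\tilde I_\eps'(u_n)$ against $\phi_R u_n$, where $\phi_R$ vanishes on $B(0,R/2)\supset B_2^\eps$, and using $\tilde f(s)s\le a s^2$ and $V(\eps x)\ge\alpha_1>a$ to obtain the genuine uniform tail estimate $\|u_n\|^2_{H^1(B(0,R)^c)}\le C/R+o_n(1)$.

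Your scheme can be repaired without a tail estimate on $u_n$, but only by changing the plan: instead of proving that the nonlinear remainder tends to $0$, bound the exterior part by
\[
\Big|\int_{|x|>R}\big(\tilde f(u_n)-\tilde f(u)\big)(u_n-u)\Big|
\le a\int_{|x|>R}(u_n-u)^2 + C\,a\,\|u\|_{L^2(|x|>R)},
\]
and absorb the first term into $\int |\nabla(u_n-u)|^2+V(\eps x)(u_n-u)^2\ge \|\nabla(u_n-u)\|_2^2+\alpha_1\|u_n-u\|_2^2$, which is possible since $a<\alpha_1$; letting $n\to\infty$ and then $R\to\infty$ kills the second term because $u$ is a fixed $L^2$ function. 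Either this absorption argument or the paper's cut-off argument must be supplied; as written, the assertion that the $L^2$ tails of $u_n-u$ are uniformly small is unjustified and is the heart of the compactness issue you yourself identify.
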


The proof of this result is basically identical to the proof of
\cite[Lemma 1.1]{dpf1}. We reproduce it here for the sake of
completeness.
\begin{proof}
Let $\{u_n\}$ be a (PS) sequence for $\tilde{I}_\eps$, i.e.
\[
\tilde{I}_\eps (u_n)=\frac{1}{2}\irn |\n u_n|^2 + \frac{1}{2} \irn
V(\eps x) u_n^2 - \irn G_\eps(x,u_n) \to c
\]
and
\[
\tilde{I}'_\eps (u_n)[u_n]=\irn |\n u_n|^2 + \irn V(\eps x) u_n^2
- \irn g_\eps(x,u_n) u_n = o(\|u_n\|).
\]
Then, by \eqref{eq:a} we have
\begin{align*}
\mu \tilde{I}_\eps(u_n) - \tilde{I}'_\eps(u_n)[u_n] = &
\left( \frac{\mu}{2}-1\right) \irn \left ( |\n u_n|^2  + V(\eps x) u_n^2 \right ) - \irn \chi_\eps (x) \left(\mu F(u_n) - f(u_n) u_n\right)\\
& - \irn \left(1- \chi_\eps (x)\right) \left(\mu \tilde{F}(u_n) - \tilde{f}(u_n)u_n\right)\\
\ge & \left( \frac{\mu}{2}-1\right) \irn \left ( |\n u_n|^2  + V(\eps x) u_n^2 \right ) - \frac{\mu}{2}a  \irn u_n^2\\
\ge & c \| u_n\|^2.
\end{align*}
Then $\{u_n\}$ is bounded and hence $u_n \rightharpoonup u$ up to
a subsequence. Now we show that this convergence is strong. It is
sufficient to prove that for every $\d >0$ there exists $R>0$ such
that
\[
\limsup_n \|u_n\|_{H^1(B(0,R)^c)}<\d.
\]
We take $R>0$ such that $B_2^\eps \subset B(0,R/2)$. Let $\phi_R$
a cut-off function such that $\phi_R=0$ in $B(0,R/2)$, $\phi_R=1$
in $B(0,R)^c$, $0\le\phi_R\le 1$ and $|\n\phi_R|\le C/R$. Then
\[
\tilde{I}'_\eps (u_n)[\phi_R u_n]=\irn \n u_n \cdot \n (\phi_R
u_n) + \irn V(\eps x) u_n^2 \phi_R  - \irn g_\eps(x,u_n) \phi_R
u_n = o_n(1)
\]
since $\{u_n\}$ is bounded. Therefore
\begin{align*}
\irn \left(|\n u_n|^2 + V(\eps x) u_n^2\right) \phi_R
= &  \irn \tilde{f}(u_n) \phi_R u_n - \irn u_n \n u_n \cdot \n \phi_R + o_n(1)\\
\le & \ a \irn  u_n^2 +\frac CR + o_n(1)
\end{align*}
and so
$$
\| u_n \|^2_{H^1(B(0,R)^c)} \le C/R +o_n(1).$$

\end{proof}

\subsection{The limit problems}

Let us start by studying the limit problem:
\begin{equation}
\label{eq:lp} \tag{$\mathcal{LP}_k$} -\Delta u + k u = f(u)
\end{equation}
for some $k >0$. The associated energy functional $\Phi_k : \H \to
\R$ is defined as:
\begin{equation} \label{def:phi}
\Phi_k(u)=\frac{1}{2} \irn |\n u|^2 + \frac{k}{2} \irn u^2 - \irn
F(u).
\end{equation}
Problem \eqref{eq:lp} can be attacked by using the Mountain Pass
Theorem in a radially symmetric framework, see \cite{ar, BGK,
beres-lions}. Indeed, let us define:
\begin{equation} \label{def:m}
m_k=\inf_{\g\in\G_k}\max_{t\in[0,1]} \Phi_k(\g(t)),
\end{equation}
with $\G_k=\{ \g\in C([0,1],\H): \ \g(0)=0, \Phi_k(\g(1))<0 \}$.
It can be proved that $m_k$ is a critical value of $\Phi_k$, that
is, there exists a solution $U \in \H$ of \eqref{eq:lp} such that
$\Phi_k(U)=m_k$. Moreover, it is known that $U$ is a ground state
solution or, in other words, it is the solution with minimal
energy, see \cite{jt-pams}.

However, without some additional hypotheses on $f$, it is not
known whether that solution is unique or not. Every non-negative
solution $U$ of \eqref{eq:lp} satisfies the following properties
(see \cite{beres-lions, strauss}):

\begin{itemize}

\item $U(x)>0$, $U$ is $C^{\infty}$ and radially symmetric;

\item $U(r)$ is decreasing in $r=|x|$ and converges to zero
exponentially as $r \to + \infty$.

\item The Pohozaev identity holds:
\begin{equation*}
\frac{N-2}{2}\irn |\n U|^2 + k \frac{N}{2}\irn U^2 = N \irn F(U).
\end{equation*}

\end{itemize}

In \cite{jt-pams} it is also proved that the infimum in
\eqref{def:m} is actually a minimum. Being more specific, we have:
\begin{lemma}  {\bf (\cite[Lemma 2.1]{jt-pams})} \label{l:gamma}
Let $U\in \H$ a ground state solution of \eqref{eq:lp}. Then,
there exists $\g\in\G_k$ such that $U\in\g([0,1])$ and
\[
\max_{t\in[0,1]} \Phi_k(\g(t))=m_k.
\]

\end{lemma}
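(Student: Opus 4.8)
The plan is to construct $\gamma$ explicitly by deforming $U$ through rescalings, using the Pohozaev identity to keep the energy at or below $m_{k}$. Write $A=\irn|\nabla U|^{2}$, $B=\irn U^{2}$, $C=\irn F(U)$; recall that the mountain pass value $m_{k}$ coincides with the least energy level, so $\Phi_{k}(U)=m_{k}$, and that (taking $U>0$, as for any ground state) the Pohozaev identity gives $\tfrac{k}{2}B-C=-\tfrac{N-2}{2N}A$. For $\theta>0$ put $U_{\theta}:=U(\cdot/\theta)$; a change of variables yields
\[
P(\theta):=\Phi_{k}(U_{\theta})=\frac{\theta^{N-2}}{2}A+\theta^{N}\Big(\frac{k}{2}B-C\Big)=\frac{\theta^{N-2}}{2}A-\frac{N-2}{2N}A\,\theta^{N}.
\]

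If $N\ge 3$ this already suffices: since $P'(\theta)=\tfrac{N-2}{2}A\,\theta^{N-3}(1-\theta^{2})$, the function $P$ increases on $(0,1)$ and decreases on $(1,\infty)$, with $P(1)=\Phi_{k}(U)=m_{k}$, $P(0^{+})=0$, $P(\theta)\to-\infty$, and $\|U_{\theta}\|\to 0$ as $\theta\to 0$. Choosing $\Theta>1$ with $P(\Theta)<0$ and setting $\gamma(0):=0$ and $\gamma(t):=U_{t\Theta}$ for $t\in(0,1]$, we obtain a path $\gamma\in\G_{k}$ with $U=U_{1}=\gamma(1/\Theta)\in\gamma([0,1])$ and $\max_{t\in[0,1]}\Phi_{k}(\gamma(t))=\max_{[0,\Theta]}P=P(1)=m_{k}$.

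The case $N=2$ carries the real work. Now the Pohozaev identity reads $C=\tfrac{k}{2}B$, so $m_{k}=\Phi_{k}(U)=\tfrac12 A$ and $P(\theta)\equiv\tfrac12 A=m_{k}$: the dilation family sits exactly on the plateau $\{\Phi_{k}=m_{k}\}$ but never reaches $0$ or negative energy, so I would cap it at both ends by amplitude rescalings. Set $Q(s):=\tfrac{k}{2}s^{2}B-\irn F(sU)$; since the Dirichlet integral is scale-invariant in dimension two, $\Phi_{k}(sU_{\theta})=\tfrac{s^{2}}{2}A+\theta^{2}Q(s)$ for all $s,\theta>0$. From the equation for $U$, $Q(1)=0$ and $Q'(1)=kB-\irn f(U)U=-A$, while \ref{f3} gives $\tfrac{d}{ds}\big(s^{-2}\irn F(sU)\big)>0$ (because $\sigma f(\sigma)-2F(\sigma)>(\mu-2)F(\sigma)>0$), so $s\mapsto s^{-2}Q(s)$ is strictly decreasing and hence $Q>0$ on $(0,1)$ and $Q<0$ on $(1,\infty)$. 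The lower cap $s\mapsto sU_{\delta}$ ($s\in[0,1]$) joins $0$ to $U_{\delta}$, and $\Phi_{k}(sU_{\delta})-m_{k}=\tfrac{s^{2}-1}{2}A+\delta^{2}Q(s)\le 0$ provided $\delta^{2}\le c_{0}:=\inf_{s\in(0,1)}\tfrac{(1-s^{2})A}{2Q(s)}$; here $c_{0}>0$ since that quotient is continuous and positive on $(0,1)$ with limit $+\infty$ as $s\to 0^{+}$ and limit $1$ as $s\to 1^{-}$ (by l'H\^opital, using $Q'(1)=-A$). The upper cap $s\mapsto sU_{T}$ ($s\ge 1$) gives $\Phi_{k}(sU_{T})-m_{k}=\tfrac{s^{2}-1}{2}A-T^{2}|Q(s)|\le 0$ as soon as $T^{2}\ge M:=\sup_{s>1}\tfrac{(s^{2}-1)A}{2|Q(s)|}$, with $M<\infty$ because that quotient tends to $1$ at $s=1^{+}$ and to $0$ at $s=\infty$; the latter holds since \ref{f3} forces $\irn F(sU)$, and hence $|Q(s)|$, to grow faster than $s^{2}$, and this same growth makes $\Phi_{k}(sU_{T})\to-\infty$, so some $s^{\ast}>1$ satisfies $\Phi_{k}(s^{\ast}U_{T})<0$.

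Finally I would concatenate the lower cap ($0$ to $U_{\delta}$), the dilation segment $\theta\in[\delta,T]$ ($U_{\delta}$ to $U_{T}$, passing through $U=U_{1}$ with energy $\equiv m_{k}$) and the upper cap ($U_{T}$ to $s^{\ast}U_{T}$), and reparametrize the result on $[0,1]$; the curve so obtained lies in $\G_{k}$, passes through $U$, and has maximal energy exactly $m_{k}$. The only genuine obstacle is this two-dimensional verification that the amplitude caps do not overshoot $m_{k}$, which is exactly where the Ambrosetti-Rabinowitz condition \ref{f3} enters.
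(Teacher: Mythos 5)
Your construction is correct and is essentially the same argument the paper invokes from \cite{jt-pams} and sketches right after the lemma: the pure dilation path for $N\ge 3$, where the Pohozaev identity forces the maximum of $\theta\mapsto\Phi_k(U_\theta)$ at $\theta=1$, and for $N=2$ the dilation plateau capped at both ends by amplitude scalings $s\mapsto sU_\delta$, $s\mapsto sU_T$, with \ref{f3} keeping the caps below $m_k$ and driving the endpoint energy negative. The details you add (sign of $Q$ via monotonicity of $s^{-2}Q(s)$, positivity of $c_0$, finiteness of $M$) are sound, so this is a faithful filled-in version of the cited proof.
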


Let us briefly describe the construction in \cite{jt-pams}. Given
$t>0$, we denote:
\[
U_t=U\left(\frac{\cdot}{t}\right), \qquad t>0.
\]

For $N\ge 3$, the curve $\g$ is constructed by simply dilating the
space variable; indeed, for $\theta$ large enough,
\[
{\g}(t)= \left\{
\begin{array}{lll}
U_t & & \hbox{if }t \in (0, \theta],\\
0   & & \hbox{if }t=0.
\end{array}
\right.
\]

For $N=2$ the construction combines dilation and multiplication by
constants in a certain way:
\[\left \{
\begin{array}{lll}
tU_{\t_0} & & \hbox{if }t\in[0,1],\\
U_\t       & & \hbox{if } \t\in[\t_0,\t_1],\\
tU_{\t_1} & & \hbox{if }t\in[1,\t_2].
\end{array}
\right.
\]
with suitable $\t_0\in(0,1)$ and $\t_1,\t_2>1$. Observe that in
both cases $\gamma$ is defined in a closed interval: a suitable
re-parametrization of it gives us the desired curve.

This curve will be of use for the construction of our min-max
scheme.

\bigskip

The following lemma studies the dependence of the critical level
on $k$:

\begin{lemma}
\label{le:cm} The map $m:(0,+\infty) \to (0,+\infty)$,
\[
m(k) = m_k.
\]
is strictly increasing and continuous.
\end{lemma}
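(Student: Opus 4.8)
The plan is to prove monotonicity and continuity of $k \mapsto m_k$ separately, using the variational characterization \eqref{def:m} together with the scaling structure of the curve described in Lemma \ref{l:gamma}.

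\textbf{Monotonicity.} Suppose $0 < k_1 < k_2$. Let $U_2$ be a ground state solution of $(\mathcal{LP}_{k_2})$, so that $\Phi_{k_2}(U_2) = m_{k_2}$, and let $\g_2 \in \G_{k_2}$ be the curve provided by Lemma \ref{l:gamma}, which passes through $U_2$ and has $\max_{t} \Phi_{k_2}(\g_2(t)) = m_{k_2}$. Since $U_2 > 0$ is nontrivial, $\irn U_2^2 > 0$; more generally, for every $t$ with $\g_2(t) \neq 0$ one has $\irn \g_2(t)^2 > 0$, and from the definition \eqref{def:phi},
\[
\Phi_{k_1}(\g_2(t)) = \Phi_{k_2}(\g_2(t)) - \frac{k_2 - k_1}{2} \irn \g_2(t)^2 < \Phi_{k_2}(\g_2(t))
\]
whenever $\g_2(t) \neq 0$ (and equality holds at $t = 0$). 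In particular $\Phi_{k_1}(\g_2(1)) \le \Phi_{k_2}(\g_2(1)) < 0$, so $\g_2 \in \G_{k_1}$ as well. Hence
\[
m_{k_1} \le \max_{t \in [0,1]} \Phi_{k_1}(\g_2(t)) < \max_{t \in [0,1]} \Phi_{k_2}(\g_2(t)) = m_{k_2},
\]
where the strict inequality uses that the maximum of $\Phi_{k_2}$ on $\g_2$ is attained at a point where $\g_2(t) = U_2 \neq 0$, so the pointwise strict inequality is inherited by the maxima. This gives $m_{k_1} < m_{k_2}$. One small point to check is that $m_{k_1} > 0$, i.e.\ that $m$ indeed maps into $(0,+\infty)$; this follows from the mountain pass geometry, since by \ref{en:fg} the functional $\Phi_k$ has a uniform positive lower bound on a small sphere $\|u\| = \rho$.

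\textbf{Continuity.} Fix $k_0 > 0$ and let $k_n \to k_0$. Upper semicontinuity of $m$ at $k_0$: take the curve $\g_0 \in \G_{k_0}$ from Lemma \ref{l:gamma} realizing $m_{k_0}$. Since $\g_0$ is defined on a compact interval and $\Phi_{k_n} \to \Phi_{k_0}$ uniformly on the compact set $\g_0([0,1]) \subset \H$ (the difference is $\frac{k_n - k_0}{2}\irn \g_0(t)^2$, bounded by $\frac{|k_n-k_0|}{2}\sup_t \|\g_0(t)\|^2$), for $n$ large we have $\Phi_{k_n}(\g_0(1)) < 0$, so $\g_0 \in \G_{k_n}$ and $m_{k_n} \le \max_t \Phi_{k_n}(\g_0(t)) \to m_{k_0}$; thus $\limsup_n m_{k_n} \le m_{k_0}$. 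For the lower semicontinuity one can argue either with curves or with ground states: if $U_n$ is a ground state for $(\mathcal{LP}_{k_n})$, the Pohozaev identity together with $\Phi_{k_n}(U_n) = m_{k_n}$ (which is bounded thanks to the $\limsup$ bound just obtained and monotonicity, hence $\|U_n\|$ is bounded by \ref{f3}) lets one extract, via the radial compact embedding, a limit $U_0$ which is a nontrivial solution of $(\mathcal{LP}_{k_0})$ with $\Phi_{k_0}(U_0) \le \liminf_n m_{k_n}$; since $m_{k_0}$ is the ground state level, $m_{k_0} \le \Phi_{k_0}(U_0)$, giving $m_{k_0} \le \liminf_n m_{k_n}$. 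A cleaner alternative avoiding compactness is purely comparative: by monotonicity, for $k_n$ near $k_0$ one sandwiches $m_{k_n}$ between $m_{k_0 - \delta}$ and $m_{k_0 + \delta}$, and then one needs one-sided continuity at $k_0$, which again reduces to the curve argument above applied with a curve realizing the level at the nearby parameter. I would present the curve-based version since it only uses tools already established in the excerpt.

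\textbf{Main obstacle.} The only genuinely delicate point is the lower semicontinuity, i.e.\ ruling out a downward jump $\liminf_n m_{k_n} < m_{k_0}$. The curve argument gives upper semicontinuity essentially for free, but to go the other way one must either invoke compactness of ground states (needing boundedness of $\|U_n\|$, which comes from \ref{f3}, and the radial Sobolev embedding) or run the comparison-plus-curve argument symmetrically; either way the care is in making sure the approximating objects do not degenerate. Given that the ground state level and the curve from Lemma \ref{l:gamma} are already in hand, I expect this to be short but it is where the actual work of the proof lies.
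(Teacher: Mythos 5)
Your monotonicity and upper-semicontinuity arguments via the curve of Lemma \ref{l:gamma} coincide with the paper's, and your first option for lower semicontinuity (ground states $U_j$ for $(\mathcal{LP}_{k_j})$, boundedness from \ref{f3}, radial compact embedding, nontrivial limit solving the limit problem) is also exactly the paper's route. The difficulty is with the version you say you would actually present. The ``purely comparative'' alternative is circular: the curve argument yields only $\limsup_j m_{k_j}\le m_k$, which combined with monotonicity gives right-continuity; for $k_j\uparrow k$ the danger is precisely a jump $\lim_j m_{k_j}<m_k$, and sandwiching $m_{k_j}$ between $m_{k-\delta}$ and $m_{k+\delta}$ presupposes the continuity of $m$ at $k$ that you are trying to prove. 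If instead you test $\Phi_k$ with the optimal curves $\gamma_j$ of the nearby parameters $k_j$, you must verify $\Phi_k(\gamma_j(1))<0$ and control $\sup_t\|\gamma_j(t)\|_{L^2}$ uniformly in $j$; both of these come from uniform estimates on the ground states at the parameters $k_j$, i.e. from the very compactness ingredients you wanted to avoid. So the ground-state argument is not an interchangeable option: it is where the lower semicontinuity actually comes from, and it is what the paper does.

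Within that argument there is a second gap: you assert that the weak limit is a nontrivial solution, and you correctly flag non-degeneracy as the delicate point, but you never supply the estimate. The paper obtains it from \ref{en:fg} together with $\Phi_{k_j}'(U_j)[U_j]=0$: for $\delta>0$ small, $\irn|\n U_j|^2+(k_j-\delta)\irn U_j^2\le C_\delta\irn|U_j|^{p+1}$, while the left-hand side stays bounded away from zero because $m_{k_j}\ge c>0$ and $F\ge 0$ (by \ref{f3}); hence $\|U_j\|_{L^{p+1}}\nrightarrow 0$, and the compact embedding of $\Hr$ into $L^{p+1}(\RN)$ forces the limit to be nonzero, after which $\irn F(U_j)\to\irn F(U)$ and weak lower semicontinuity of the norm give $\liminf_j m_{k_j}\ge\Phi_k(U)\ge m_k$. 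A minor related slip: the Pohozaev identity combined with $\Phi_{k_j}(U_j)=m_{k_j}$ only controls $\irn|\n U_j|^2$, not the $L^2$ norm; the $H^1$ bound is obtained, as in Proposition \ref{pr:PS}, from \ref{f3} and the identity $\Phi_{k_j}'(U_j)[U_j]=0$. With these two repairs your proof is the paper's proof.
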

\begin{proof}
Let us first show that $m$ is strictly increasing. Take $k_1, k_2
>0$ with $k_1 < k_2$ and $ \g \in \G_{k_2}$ given by Lemma
\ref{l:gamma}. Observe that clearly $\g \in \G_{k_1}$, and:
$$ m_{k_1} \le \max_{t\in[0,1]} \Phi_{k_1}(\g(t)) < \max_{t\in[0,1]}
\Phi_{k_2}(\g(t))= m_{k_2}.$$

We now prove the continuity of $m$. Take $\{k_j\}_j$ a sequence of
positive real numbers that converges to $k>0$. As above, take
$\gamma \in \G_k$ given by Lemma \ref{l:gamma}: then, for $j$
large enough $\gamma \in \G_{k_j}$ and
$$ m_{k_j} \le \max_{t\in[0,1]} \Phi_{k_j}(\g(t)) \to \max_{t\in[0,1]}
\Phi_{k}(\g(t))= m_{k}.$$

Then
\begin{equation*}
\limsup_j m_{k_j} \le m_k.
\end{equation*}

We now prove a reversed inequality. For every $j\in\mathbb{N}$, we
consider $U_j$ a radially symmetric least energy solution of
\begin{equation}
\label{eq:lpkj} \tag{$\mathcal{LP}_{k_j}$} -\Delta u + k_j u =
f(u).
\end{equation}

The sequence $\{U_j\}_j$ is bounded in $\H$-norm. Indeed, since
\[
\frac{1}{2} \irn |\n U_j|^2 + \frac{k_j}{2} \irn U_j^2 - \irn
F(U_j)={m}_{k_j}= O(1),
\]
and
\[
\irn |\n U_j|^2 + k_j \irn U_j^2 - \irn f(U_j)U_j=0
\]
then, we obtain by \ref{f3}
\begin{align*}
\mu {m}_{k_j} =  & \left(\frac{\mu}{2}-1\right) \irn |\n U_j|^2 + k_j \left(\frac{\mu}{2}-1\right) \irn U_j^2 - \irn \mu F(U_j) - f(U_j)U_j\\
\ge  & c \| U_j\|^2.
\end{align*}
Therefore $U_j \rightharpoonup U$ in $\Hr= \{ u \in \H \; \vline \; \ u \mbox{ is radially symmetric}\}$. By the compact embedding of $\Hr$ into $L^{p+1}(\RN)$  (see \cite{strauss}), we get that $U_j \to U$ in $L^{p+1}(\RN)$.\\
Since
\[
\frac{1}{2} \irn |\n U_j|^2 + \frac{k_j}{2} \irn U_j^2 = {m}_{k_j}
+ \irn F(U_j) \ge c > 0
\]
and, by \ref{en:fg}, fixed $\delta>0$ small enough,
\[
0< c \le \irn |\n U_j|^2 + (k_j -\d) \irn U_j^2 \le C \irn
|U_j|^{p+1},
\]
so that $U\neq 0$.

Observe that $U$ is a positive radially symmetric solution of the
problem:
$$ - \Delta U + k U = f(U).$$

Moreover, using the strong convergence in $L^{p+1}(\RN)$,
\[
\irn F(U_j) \to \irn F(U).
\]

By the lower semicontinuity of the $\H$ norm, we conclude:
$$ \liminf_{j \to +\infty} m_{k_j}= \liminf_{j \to +\infty}
\Phi_{k_j}(U_j)\ge \Phi_k(U) \ge m_k.$$
\end{proof}

We finish the section with a couple of definitions that will be of
use later. First, let us restrict ourselves to the case $k=1$; for
simplicity, we will denote:
\begin{equation*} 
 \Phi:= \Phi_1,\ m:= m_1.
\end{equation*}
Let us define:
\begin{equation}\label{eq:S}
\mathcal{S}=\left\{ u\in\H\;\vline\; \Phi(u)=m, \
\Phi'(u)=0\right\}.
\end{equation}
In other words, $\mathcal{S}$ denotes the set of positive ground
state solutions of the problem:
$$ - \Delta u + u = f(u).$$

Moreover, given any $ y \in \R^N$, we define the energy functional
$J_y : \H \to \R$,
\begin{equation} \label{def:J}
J_y (u) = \frac{1}{2}\irn |\n u|^2 + \frac{V(y)}{2} \irn  u^2 -
\irn G(y,u).
\end{equation}
Obviously, the critical points of $J_y$ are solutions of the
problem:
$$ -\Delta u  + V(y) u = g(y,u).  $$

\section{The min-max argument} \label{The min-max argument}

In this section we will develop the min-max argument that will
provide the existence of a solution. In order to do that, some
estimates on the min-max value are needed: those are the
fundamental part of our work, and are contained in Proposition
\ref{fund}. A key ingredient of our proof is a classical property
of the Brouwer degree concerning existence of connected sets of
solutions (see the proof of Lemma \ref{le:grado}).

First of all, let us observe that under our hypotheses on $V$,
there exists a vector space $E$ such that:
\begin{enumerate}[label=\alph*), ref=\alph*)]
\item $V|_E$ has a strict local maximum at $0$; \item
$V|_{E^\perp}$ has a strict local minimum at $0$.
\end{enumerate}

Indeed, in case \ref{V1} $E= \RN$, whereas, in case \ref{V2} $E$
is the space formed by eigenvectors associated to negative
eigenvalues of $D^2V(0)$.

\medskip

First of all, let us define the following topological cone:
\begin{equation} \label{def:cono}
\Ce=\left\{ \gamma_t(\cdot-\xi)\;\vline\; t\in[0,1], \xi\in
\overline{B_0^\eps}\cap E \right\}.
\end{equation}

Here $\gamma_t=\gamma(t)$ is the curve given in Lemma
\ref{l:gamma} for $k=1$ and $U$ a radially symmetric ground state.
Observe that $\gamma(0)=0$ is the vertex of the cone. Let us
define a family of deformations of $\Ce$:
\[
\G_\eps=\left\{\eta\in C\left(\Ce,\H\right) \; \vline \;
\eta(u)=u, \ \forall  u \in \partial\Ce \cup
\tilde{I}_{\e}^{m/2}\right\},
\]
where $\partial \Ce$ is the topological boundary of $\Ce$ and
$\tilde{I}_{\e}^{m/2}$ is the sub-level $\tilde{I}_{\e}^{m/2}= \{
u \in \H\mid  \tilde{I}_{\e}(u) < m/2\}$. Recall that $m=m_1$ is
the ground state energy level of the problem $ - \Delta u + u
=f(u)$, see \eqref{def:m}.

We define the min-max level:
\[
m_\eps=\inf_{\eta\in\G_\e}\max_{u\in\Ce} \tilde{I}_\eps(\eta(u)).
\]

\begin{proposition}
\label{pr:bb} There exist $\eps_0>0, \ \delta>0$ such that for
every $\eps\in(0,\eps_0)$
\begin{equation*} 
\tilde{I}_\eps|_{\partial \Ce} \le m-\delta.
\end{equation*}
\end{proposition}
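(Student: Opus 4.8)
The plan is to analyze $\tilde I_\eps$ on the two pieces of the boundary $\partial\mathcal C_\eps$ separately. Recall that a point of $\mathcal C_\eps$ has the form $\gamma_t(\cdot-\xi)$ with $t\in[0,1]$ and $\xi\in\overline{B_0^\eps}\cap E$; the topological boundary $\partial\mathcal C_\eps$ (relative to the cone) consists of the ``lateral'' part where $\xi\in\partial B_0^\eps\cap E$, together with the ``top'' part $\{t=1\}$ (the ``bottom'' $t=0$ is the single vertex $0$, which is interior to no cell but harmless since $\tilde I_\eps(0)=0<m/2<m-\delta$). Along the top $t=1$ we use that $\gamma$ comes from Lemma \ref{l:gamma}: $\Phi_1(\gamma_1)<0$, and in fact (by the explicit construction recalled after Lemma \ref{l:gamma}) $\gamma_1=U_\theta$ or $tU_{\theta_1}$ is a large dilation/multiple for which $\Phi_1(\gamma_1)$ is very negative. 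Since $\tilde I_\eps(u)\le I_\eps(u)$ only fails in general, I instead compare $\tilde I_\eps(\gamma_1(\cdot-\xi))$ with $\Phi_1(\gamma_1)$ directly using \ref{en:g1}: $\tilde I_\eps(v)\le \frac12\|\nabla v\|_2^2+\frac{\a_2}{2}\|v\|_2^2-\irn \tilde F(v)$ is not quite $\Phi_1$; rather I bound it by $\Phi_{\a_2}(v)$ plus a term controlled by the truncation. The cleanest route: on $\{t=1\}$, $\tilde I_\eps(\gamma_1(\cdot-\xi))$ is uniformly (in $\xi$, $\eps$) close to a fixed negative number, hence $\le -1 < m-\delta$, provided we first fix $\theta$ (equivalently the endpoint of $\gamma$) large enough in Lemma \ref{l:gamma}; the dependence on $\eps$ and $\xi$ enters only through $V(\eps x)$ inside a compactly-supported-ish integral and through $\chi_\eps$, both of which perturb a large negative quantity by $O(1)$.

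The main work is the lateral boundary $\xi\in\partial B_0^\eps\cap E$. Here $\eps\xi\in\partial B_0\cap E$, a point where (by the strict maximum property a) of $V|_E$) we have $V(\eps\xi)>1=V(0)$ if $R_0$ is chosen small enough; by compactness of $\partial B_0\cap E$ there is $\rho>0$ with $V(y)\ge 1+\rho$ for all $y\in\partial B_0\cap E$. The idea is that $t\mapsto \tilde I_\eps(\gamma_t(\cdot-\xi))$ is, up to $o(1)$ as $\eps\to 0$, close to $t\mapsto \Phi_{V(\eps\xi)}(\gamma_t)$, whose maximum over $t$ is at most $m_{V(\eps\xi)}\le m_{1+\rho}<m$ by Lemma \ref{le:cm} (strict monotonicity). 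Setting $\delta=\tfrac12(m-m_{1+\rho})>0$ gives the claim on the lateral part once $\eps$ is small. To make ``close to $\Phi_{V(\eps\xi)}$'' precise I would write
\[
\tilde I_\eps(\gamma_t(\cdot-\xi)) = \frac12\irn|\nabla\gamma_t|^2 + \frac12\irn V(\eps x+\eps\xi)\gamma_t^2 - \irn G_\eps(x+\xi,\gamma_t),
\]
and estimate the difference from $\Phi_{V(\eps\xi)}(\gamma_t)=\frac12\|\nabla\gamma_t\|_2^2+\frac{V(\eps\xi)}2\|\gamma_t\|_2^2-\irn F(\gamma_t)$ by three terms: the potential term $\frac12\irn (V(\eps x+\eps\xi)-V(\eps\xi))\gamma_t^2$, which is $o(1)$ uniformly in $t\in[0,1]$ and $\xi\in\partial B_0^\eps\cap E$ because $V$ is uniformly continuous near $0$, $\gamma_t$ ranges over a compact subset of $\H$ (Lemma \ref{l:gamma} gives a compact curve), and $\gamma_t$ has fast-decaying tails; and the nonlinear discrepancy $\irn (F(\gamma_t)-G_\eps(x+\xi,\gamma_t)) = \irn(1-\chi_\eps(x+\xi))(F(\gamma_t)-\tilde F(\gamma_t))$, which by \ref{en:ft2} is nonnegative and supported where $\eps(x+\xi)\notin B_1$, i.e. $|x+\xi|\ge R_1/\eps$; since $|\xi|\le R_0/\eps< R_1/\eps$ this forces $|x|\ge (R_1-R_0)/\eps\to\infty$, and the exponential decay of $\gamma_t$ (uniformly on the compact curve) makes this integral $o(1)$ as well — and with the right sign it only helps.

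The step I expect to be the main obstacle is establishing these uniformities simultaneously in $t\in[0,1]$ and in $\xi\in\partial B_0^\eps\cap E$: one needs that $\{\gamma_t: t\in[0,1]\}$ is a compact set in $\H$ with a common integrable/exponential decay bound (true from the explicit construction recalled after Lemma \ref{l:gamma}, since each $\gamma_t$ is a dilation and/or multiple of a single exponentially decaying ground state $U$, and the dilation parameter ranges over a compact interval bounded away from $\infty$), and that translation by $\xi$ does not spoil the potential estimate — which is fine precisely because $|\eps\xi|=R_0\to 0$ only after we also let $R_0$ be small; here $R_0$ is fixed small first (so that $V|_{\partial B_0\cap E}\ge 1+\rho$ and $R_0<R_1$), and then $\eps_0$ is chosen small depending on $R_0,R_1,\rho$ and on the modulus of continuity of $V$. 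Assembling the lateral estimate (bound $m_{1+\rho}+o(1)$), the top estimate (bound $\le -1$), and triviality at the vertex, and taking $\delta=\tfrac12\min\{1,\,m-m_{1+\rho}\}$ and $\eps_0$ small, yields $\tilde I_\eps|_{\partial\mathcal C_\eps}\le m-\delta$ for all $\eps\in(0,\eps_0)$.
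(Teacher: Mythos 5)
Your overall plan (splitting $\partial\Ce$ into the top $t=1$ and the lateral part $\xi\in\partial B_0^\eps\cap E$, and comparing $\tilde I_\eps$ with an autonomous functional via the uniform exponential decay of $\gamma_t$ and continuity of $V$) is the same as the paper's, but your key lateral estimate rests on a sign error. Since $V|_E$ has a strict local \emph{maximum} at $0$ with $V(0)=1$, on $\partial B_0\cap E$ one has $V\le 1-\sigma$ for some $\sigma>0$, not $V\ge 1+\rho$ as you assert. Your use of Lemma \ref{le:cm} is reversed as well: $k\mapsto m_k$ is strictly \emph{increasing}, so $m_{1+\rho}>m$ and your $\delta=\tfrac12(m-m_{1+\rho})$ would be negative; you reach the desired conclusion only because two opposite errors compensate, and if $V$ really were $\ge 1+\rho$ on the lateral boundary this route would give a bound above $m$, not below. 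Moreover, even with the correct inequality $V(\eps\xi)\le 1-\sigma$, the step ``$\max_t\Phi_{V(\eps\xi)}(\gamma_t)\le m_{V(\eps\xi)}$'' is not available: $\gamma$ is an optimal path only for $\Phi_1$ (Lemma \ref{l:gamma}), and for any other $k$ the definition of $m_k$ as an infimum only gives $\max_t\Phi_k(\gamma_t)\ge m_k$. The paper instead uses the pointwise strict inequality $\Phi_{1-\sigma}(u)<\Phi(u)$ for $u\neq 0$, plus compactness of the curve, to conclude $\max_{t}\Phi_{1-\sigma}(\gamma_t)<m$, which is what yields a uniform $\delta>0$.

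There is a secondary gap at the top $t=1$. You propose to fix the endpoint parameter $\theta$ large so that $\Phi_1(\gamma_1)$ is very negative and then absorb the potential and truncation effects as an $O(1)$ perturbation. But the potential perturbation on the bulk of $\gamma_1(\cdot-\xi)$ is of size comparable to $(\nu-1)\|\gamma_1\|_{L^2}^2$ with $\nu=\max_{\overline{B_1}}V$, and $\|\gamma_1\|_{L^2}^2$ grows like $\theta^N$, i.e.\ at the same rate as $|\Phi_1(\gamma_1)|$; if $\nu-1$ exceeds the Pohozaev ratio (which can happen, e.g.\ in case \ref{V3} where $\max_{\overline{B_1}}V>1$), then $\Phi_\nu(U_\theta)>0$ for every $\theta$ and no choice of $\theta$ saves the estimate. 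The paper's remedy is different: it shows $\tilde I_\eps(\gamma_1(\cdot-\xi))\le\Phi_\nu(\gamma_1)+o_\eps(1)$ and then \emph{shrinks $B_1$} so that $\nu$ is close enough to $1$ that $\Phi_\nu(\gamma_1)<0$ for the fixed curve. Finally, a small point: the truncation term $\int_{\RN}(1-\chi_\eps(\cdot+\xi))\bigl(F(\gamma_t)-\tilde F(\gamma_t)\bigr)$ is nonnegative and therefore works \emph{against} your upper bound (it adds to $\tilde I_\eps$), although, as you note, it is $o_\eps(1)$ by the decay of $\gamma_t$, so this is harmless once stated with the correct sign.
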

\begin{proof}
It suffices to show that:
\begin{equation} \label{1}
\tilde{I}_\eps(\gamma_1(\cdot - \xi)) <0 \quad \forall  \xi \in
B_0^\e \cap E
\end{equation}
and
\begin{equation} \label{2}
\tilde{I}_\eps(\gamma_t(\cdot - \xi)) < m - \d \quad \forall  \xi
\in
\partial B_0^{\e} \cap E, \ t \in [0,1].
\end{equation}

Let us denote $\hat{U}= \gamma_1(\cdot-\xi)$ for some $\xi \in
B_0^\e \cap E$. Then,
$$
\tilde{I}_\eps(\hat{U}) \le  \int_{B_1^{\e}} \left ( \frac{1}{2}
|\n \hat{U}|^2 + \frac{1}{2} V(\eps x) \hat{U}^2 - F(\hat{U})
\right )\,  + \int_{(B_1^{\e})^c} \left ( \frac{1}{2} |\n
\hat{U}|^2 + \frac{1}{2} V(\eps x) \hat{U}^2 \right )\, .$$ By the
exponential decay of $\hat{U}$, we get:
$$\tilde{I}_\eps(\hat{U}) \le \Phi_{\nu}(\hat{U})+ o_\eps(1)$$ where $\nu= \max_{x \in \overline{B_1}}
V(x)$ and $\Phi_{\nu}$ is defined in \eqref{def:phi} . By
shrinking $B_1$, if necessary, we can assume that
$\Phi_{\nu}(\hat{U})$ is negative, so we obtain \eqref{1}.

In order to prove \eqref{2}, let us first observe that there
exists $\sigma>0$ such that $V(x) < 1-\sigma$ for every $x \in
\partial B_0^\e \cap E$. Then,
\begin{align*}
\tilde{I}_\eps(\gamma_t(\cdot-\xi)) &\le  \int_{B(0, \sqrt{\e})}
\left ( \frac{1}{2} |\n \gamma_t(x)|^2 + \frac{1}{2} V(\eps
(x+\xi))\g_t(x)^2 - F(\gamma_t(x)) \right )
\\
&\quad  + \int_{B(0, \sqrt{\e})^c} \left ( \frac{1}{2} |\n
\gamma_t(x)|^2 + \frac{1}{2} \alpha_2\g_t(x)^2 \right ).
\end{align*}

Again by the exponential decay of $\gamma_t$, the second right
term tends to zero as $\e \to 0$. Observe also that this
convergence is uniform in $t$, since the exponential decay is
uniform in $t$. By using dominated convergence theorem,
$$
\tilde{I}_\eps(\gamma_t(\cdot-\xi)) \le \Phi_{1-\sigma}(\gamma_t)
+ o_\e(1).$$ Finally, since $\Phi_{1-\sigma}(u) < \Phi(u)$ for any
$u \neq 0$, we have that
$$ \max_{t \in [0,1]} \Phi_{1-\sigma}(\gamma_t) < \max_{t \in [0,1]} \Phi(\gamma_t)=m. $$

\end{proof}

We now give a first estimate on the min-max values:

\begin{proposition}\label{estima}
We have that
\[
\limsup_{\eps\to 0} m_{\e} \le m.
\]
\end{proposition}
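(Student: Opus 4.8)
The plan is to bound $m_\eps$ using the simplest admissible deformation, the identity. Since $\Id\in\Gamma_\eps$ (it fixes all of $\Ce$, hence in particular $\partial\Ce\cup\tilde{I}_\eps^{m/2}$), one has $m_\eps\le\max_{u\in\Ce}\tilde{I}_\eps(u)$, so it suffices to show
\[
\sup_{t\in[0,1],\ \xi\in\overline{B_0^\eps}\cap E}\tilde{I}_\eps\big(\gamma_t(\cdot-\xi)\big)\le m+o_\eps(1).
\]
The argument follows the lines of Proposition \ref{pr:bb}: translate by $\xi$ and exploit the exponential decay of $\gamma_t$, which is uniform in $t\in[0,1]$ (it is inherited from the decay of $U$, of its dilations $U_s$ with $s$ in a fixed compact interval, and from the bounded multiplicative factors in the $N=2$ construction recalled after Lemma \ref{l:gamma}); the curve is also uniformly bounded in $\H$ since $[0,1]$ is compact.

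First I would dispose of the truncation. From $G_\eps(x,s)=\chi_\eps(x)F(s)+(1-\chi_\eps(x))\tilde{F}(s)$ with $\supp(1-\chi_\eps)\subset(B_1^\eps)^c$, one gets $0\le\irn F(\gamma_t(\cdot-\xi))-\irn G_\eps(x,\gamma_t(\cdot-\xi))\le\int_{(B_1^\eps)^c}F(\gamma_t(\cdot-\xi))$; as $\xi\in\overline{B_0^\eps}$ and $R_0<R_1$, after the translation $y=x-\xi$ this last integral runs over $\{|y|\ge(R_1-R_0)/\eps\}$, so by $F(s)\le C(s^2+|s|^{p+1})$ (from \ref{en:fg}) and the uniform decay it is $o_\eps(1)$, uniformly in $(t,\xi)$. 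Using also the translation invariance of the kinetic and $L^2$ terms, this leaves
\[
\tilde{I}_\eps(\gamma_t(\cdot-\xi))\le\frac12\irn|\n\gamma_t|^2+\frac12\irn V(\eps y+\eps\xi)\,\gamma_t(y)^2\,dy-\irn F(\gamma_t)+o_\eps(1).
\]
For the potential term I would split at $|y|=\eps^{-1/2}$: on $\{|y|>\eps^{-1/2}\}$ bound $V\le\alpha_2$ by \ref{V0} and use the uniform decay and $L^2$-bound; on $\{|y|\le\eps^{-1/2}\}$ one has $|\eps y|\le\sqrt{\eps}$ while $\eps\xi\in\overline{B_0}\cap E$, so — choosing $R_0$ small enough that $V$ is $C^1$ on $\overline{B(0,2R_0)}$ and $0$ is a maximum of $V|_E$ on $\overline{B_0}\cap E$ — a first-order estimate gives $V(\eps y+\eps\xi)\le V(\eps\xi)+C|\eps y|\le1+C\sqrt{\eps}$, uniformly in $\xi$. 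Hence $\frac12\irn V(\eps y+\eps\xi)\gamma_t(y)^2\,dy\le\frac12\irn\gamma_t^2+o_\eps(1)$, and therefore
\[
\tilde{I}_\eps(\gamma_t(\cdot-\xi))\le\Phi(\gamma_t)+o_\eps(1)\le\max_{t\in[0,1]}\Phi(\gamma_t)+o_\eps(1)=m+o_\eps(1),
\]
the last equality being Lemma \ref{l:gamma} with $k=1$.

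Taking the supremum over $\Ce$ and then $\limsup_{\eps\to0}$ yields the claim. The step I expect to need the most care is the uniformity of all the $o_\eps(1)$ terms in the pair $(t,\xi)$: specifically the first-order estimate on $V$ near $0$, which relies on $\eps\xi$ remaining in the region where $V|_E\le V(0)=1$ (whence the smallness of $R_0$) and on the $C^1$ regularity of $V$ — available in all of cases \ref{V1}--\ref{V3} — together with the uniform-in-$t$ exponential decay of $\gamma_t$. Everything else is routine.
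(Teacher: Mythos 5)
Your proof is correct and follows essentially the same route as the paper: both bound $m_\eps$ by $\max_{\Ce}\tilde I_\eps$ via the identity deformation, split the integrals at radius $\eps^{-1/2}$, use the uniform exponential decay of $\gamma_t$ to discard the truncation and the far-field terms, and exploit $V\le 1$ on $\overline{B_0}\cap E$ together with Lemma \ref{l:gamma} to land on $m$. The only (harmless) difference is that you make the estimate uniform in $(t,\xi)$ directly through a first-order bound on $V$, whereas the paper extracts a maximizing pair $(t_\eps,\xi_\eps)$, passes to a subsequence and applies dominated convergence to reach $\Phi_{V(x_0)}(\gamma_{t_0})\le m$.
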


\begin{proof}
By definition,
$$ m_{\e} \le \max_{u \in \Ce}\tilde{I}_\e(u).$$
So, let us estimate this last term. In the following we take a
sequence $\e=\e_n \to 0$, but we drop the sub-index $n$ for the
sake of clarity. For any $\e>0$ sufficiently small, there exists
$t_{\e} \in [0,1]$, $\xi_\e \in \overline{B_0^\e} \cap E$ such
that:
\begin{align*}
\max_{u \in \Ce}\tilde{I}_\e(u) &=
\tilde{I}_\e(\gamma_{t_\e}(\cdot - \xi_\e))
\\
&\le  \int_{B(0, \sqrt{\e})} \left ( \frac{1}{2} |\n
\gamma_{t_\e}(x)|^2 + \frac{1}{2} V(\eps
(x+\xi_\e))\gamma_{t_\e}(x)^2 - F(\gamma_{t_\e}(x)) \right )\,
\\
&\quad + \int_{B(0, \sqrt{\e})^c} \left ( \frac{1}{2} |\n
\gamma_{t_\e}(x)|^2 + \frac{1}{2} \alpha_2 \gamma_{t_\e}^2
\right)\, .
\end{align*}
Up to a subsequence we can assume that $t_\e \to t_0 \in [0,1]$
and $\e \xi_\e \to x_0 \in \overline{B_0} \cap E$. Therefore, by
the uniform exponential decay of $\gamma_t$ and dominated
convergence theorem, we get:
$$\tilde{I}_\e(\gamma_{t_\e}(\cdot - \xi_\e)) \to \Phi_{V(x_0)}(\gamma_{t_0}). $$
\\
Observe now that $V(x_0) \le 1$ and then
$$ \Phi_{V(x_0)}(\gamma_{t_0}) \le \max_{t \in [0,1]}
\Phi(\gamma_t) = m.$$

\end{proof}

The following proposition yields a fundamental estimate in our
min-max argument:

\begin{proposition}\label{fund}
There holds
\[
\liminf_{\eps\to 0} m_{\e} \ge m.
\]
\end{proposition}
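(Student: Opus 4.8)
The goal is a lower bound $\liminf_{\eps\to 0} m_\eps \ge m$, which must come from a careful analysis of an arbitrary deformation $\eta\in\G_\eps$ together with a topological obstruction guaranteeing that the deformed cone $\eta(\Ce)$ always meets a set on which the energy is at least $m-o_\eps(1)$. The natural strategy is: (1) build a "center of mass" map $\beta:\H\setminus\{0\}\to\R^N$ (with values suitably renormalized), well-defined on functions of energy close to $m$ and concentrated enough, such that $\beta(\gamma_t(\cdot-\xi)) = \xi$ for the functions on the boundary of the cone; (2) use a Brouwer-degree / Leray–Schauder continuation argument to show that for any $\eta\in\G_\eps$ there is a connected set of points $u\in\Ce$ along which $\beta(\eta(u))$ stays in $\overline{B_0^\eps}\cap E$ and which connects the vertex region to the base, forcing the existence of some $u^\ast$ with $\eta(u^\ast)$ having center of mass $0$ (or rather $\eps\beta\to 0$); (3) on that $u^\ast$, compare $\tilde I_\eps(\eta(u^\ast))$ with the autonomous level $m$ via a concentration-compactness / splitting analysis, using $V(0)=1$ and the fact that $G_\eps(x,s)\le F(s)$, to conclude $\tilde I_\eps(\eta(u^\ast))\ge m - o_\eps(1)$.

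More concretely, I would first fix $\eta\in\G_\eps$ and consider the composition $u\mapsto \eta(u)$ on $\Ce$. On $\partial\Ce$ we have $\eta=\Id$, and the boundary consists of the base $\{\gamma_1(\cdot-\xi)\}$ (where $\tilde I_\eps<0<m/2$ by Proposition \ref{pr:bb}, so these lie in the sublevel where $\eta$ is also the identity) and the lateral/vertex part $\{\gamma_t(\cdot-\xi):\xi\in\partial B_0^\eps\cap E\}$ together with the vertex $\gamma_0=0$. The key point: define on a suitable neighborhood of $\S$ (translated) the barycenter map, then consider the finite-dimensional reduction $\Psi_\eps:\Ce\to E$, $\Psi_\eps(\gamma_t(\cdot-\xi)) := \eps\,\beta(\eta(\gamma_t(\cdot-\xi)))$ restricted/projected to $E$, whenever $\eta(\gamma_t(\cdot-\xi))$ is close enough to the set of translated ground states for $\beta$ to make sense. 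The topological input is that $\Psi_\eps$ agrees with $(t,\xi)\mapsto \eps\xi$ on the relevant part of $\partial\Ce$, so by the degree argument (the classical Leray–Schauder property that the connected component of the zero set of a parametrized map joins the two ends — cf. \cite{leray-s, mawhin}) there must exist $(t^\ast,\xi^\ast)$, with $\gamma_{t^\ast}(\cdot-\xi^\ast)$ an interior point of the cone, such that $\eta(\gamma_{t^\ast}(\cdot-\xi^\ast))$ has (renormalized) center of mass converging to $0$. The argument must also rule out that $\eta(u)$ escapes the domain of definition of $\beta$ for all interior $u$ — this is where one uses that $\max_{u\in\Ce}\tilde I_\eps(\eta(u))$ would otherwise be controlled and combined with Proposition \ref{estima}; if $\limsup m_\eps < m$ we may assume along a subsequence the max stays below $m-2\delta$ and derive the contradiction.

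Having produced $u^\ast_\eps = \eta(\gamma_{t^\ast_\eps}(\cdot-\xi^\ast_\eps))$ with $\tilde I_\eps(u^\ast_\eps) = \max_{u\in\Ce}\tilde I_\eps(\eta(u)) + o_\eps(1)$ (one arranges this by taking $\eta$ nearly optimal) and with center of mass $\eps y_\eps \to 0$, the final step is the energy comparison. Translate $u^\ast_\eps$ to be centered at the origin; using $V(\eps x)\to V(0)=1$ uniformly on bounded sets, $\alpha_1\le V\le\alpha_2$, and $G_\eps(x,s)\le F(s)$, one shows $\tilde I_\eps(u^\ast_\eps) \ge \Phi(v_\eps) + o_\eps(1)$ for the translated profile $v_\eps$, after a concentration-compactness analysis (the truncation makes $\tilde I_\eps$ coercive away from $B_2^\eps$, so no mass leaks to infinity in the $\eps x$-variable; vanishing is excluded because the energy is bounded below by a positive constant near the cone's base region via Proposition \ref{pr:bb}; dichotomy would split energy into pieces each $\ge m$, already giving the bound, or into a piece $\ge m$ plus lower-order remainder). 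Since $v_\eps$ is then (close to) an admissible competitor for the autonomous mountain-pass level, $\Phi(v_\eps)\ge m - o_\eps(1)$, and letting $\eps\to 0$ yields $\liminf_\eps m_\eps\ge m$.

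\textbf{Main obstacle.} The hard part is Step (2): making the center-of-mass map genuinely well-defined and continuous along the whole deformed cone — a priori $\eta(u)$ need not be concentrated or close to a single ground state bump, so $\beta$ may be meaningless there. The resolution is to argue by contradiction (assume $\liminf m_\eps < m$), which forces the energies along $\eta(\Ce)$ to stay strictly below $m$, and then to combine this with a careful Lions-type splitting to show that such low-energy functions are in fact close (in $\H$) to a \emph{single} translated element of $\S$, so that $\beta$ is well-defined exactly where we need it; the degree/continuation argument then runs and contradicts Proposition \ref{estima}. Managing this interplay — that the degree argument needs $\beta$ defined, and $\beta$ is defined only because the contradiction hypothesis bounds the energy — is the delicate core of the proof.
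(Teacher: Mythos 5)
Your topological step is essentially the paper's Lemma \ref{le:grado}: the barycentre map projected onto $E$ via $h_\eps$, the Brouwer degree computation on $\xi\mapsto\b_\eps(\eta(\g_t(\cdot-\xi)))$, and the Leray--Schauder connectedness property. The genuine gap is in how you exploit it. You extract a single $u^\ast$ with $\b_\eps(\eta(u^\ast))=0$ and then claim $\tilde I_\eps(\eta(u^\ast))\ge m-o_\eps(1)$ because the translated profile is ``close to an admissible competitor for the autonomous mountain-pass level''. No such pointwise lower bound holds: $\Phi(v)\ge m$ is false for a generic function $v$ (small radial functions have barycentre zero and arbitrarily small energy), and a point of $\eta(\Ce)$ is in no way almost-critical, so neither your dichotomy claim (``pieces each $\ge m$'') nor your fallback contradiction scheme (``energy below $m$ forces closeness to a single translated ground state'') is justified. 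Moreover, even granting concentration, you never explain why the concentration point should be one where $V\ge 1$; concentration at a point of $E$ near $0$, where $V<1$, would give a limit level $m_{V(\bar y)}<m$ by the strict monotonicity of $k\mapsto m_k$, and your appeal to ``$V(0)=1$'' does not rule this out.

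What the paper actually does after the degree step is to turn the connected zero set $\Upsilon$ into an admissible set for an auxiliary \emph{constrained} min-max value $b_\eps$, taken over connected compact sets on which $\b_\eps\equiv 0$ and which join a small-norm function to a function of negative energy; this yields $m_\eps\ge b_\eps$, and the whole analytic burden is then to prove $\liminf_\eps b_\eps\ge m$ (Proposition \ref{pr:b>=m}). That requires: producing a constrained critical point $u_\eps$ at level $b_\eps$, i.e.\ a solution of $-\Delta u_\eps+V(\eps x)u_\eps=g_\eps(x,u_\eps)+\l_\eps\cdot h_\eps(x)u_\eps$ (Lemma \ref{le:lambda}); the key Lagrange multiplier estimate $\l_\eps=O(\eps)$ obtained from translation/Pohozaev-type identities (Lemma \ref{le:Oeps}); and a concentration--compactness splitting, including a half-space version with reflections when $\bar\l=\lim\l_\eps/\eps\neq 0$ (Proposition \ref{pr:uuu}), followed by a two-case energy accounting. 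Only then does the constraint pay off: when a single bump survives, $\b_\eps(u_\eps)=0$ forces the concentration point $\bar y_1$ into $E^\perp$, where $V(\bar y_1)\ge 1$ by the saddle geometry, so the limit profile is a nontrivial solution of the limit problem and its energy is at least $m_{V(\bar y_1)}\ge m$ by Lemma \ref{le:cm}. These ingredients --- the constrained critical point and multiplier control, the $\bar\l\neq 0$ case, and the identification of the concentration point in $E^\perp$ --- are the core of the proof and are missing from your proposal; they cannot be replaced by the single-point energy comparison you describe.
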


Before proving Proposition \ref{fund}, let us show how it is used
to provide existence of a solution. The following theorem is the
main result of this section:

\begin{theorem} There exists $\e_0>0$ such that for $\e \in
(0,\e_0)$ there exists a positive solution $u_{\e}$ of the problem
\eqref{eq:truncated}. Moreover, $\tilde{I}_{\e}(u_\e)=m_\e$.

\end{theorem}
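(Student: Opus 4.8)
The plan is to obtain $u_\e$ as a critical point at level $m_\e$ by a standard deformation (or linking-type) argument, using the two key facts already established: the Palais--Smale condition for $\tilde I_\e$ (Proposition \ref{pr:PS}) and the sandwich $m_\e \to m$ coming from Propositions \ref{estima} and \ref{fund}. The essential separation needed is $m_\e > m/2$ and $\tilde I_\e|_{\partial\Ce} \le m-\delta < m_\e$ for $\e$ small, which follows from Proposition \ref{pr:bb} together with $\liminf m_\e \ge m$. Thus the topological boundary $\partial\Ce$ together with the sublevel $\tilde I_\e^{m/2}$ — precisely the set fixed by every $\eta \in \G_\e$ — stays strictly below the min-max level, so the min-max class is nontrivial in the usual sense.

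First I would fix $\e \in (0,\e_0)$ small enough that $m-\delta < m_\e$ and $m_\e < m+1$ (say), using Propositions \ref{pr:bb}, \ref{estima}, \ref{fund}. Then I would argue by contradiction: suppose $m_\e$ is not a critical value of $\tilde I_\e$. Since $\tilde I_\e$ satisfies (PS) (Proposition \ref{pr:PS}), there are no (PS) sequences at level $m_\e$, so by the quantitative deformation lemma there exist $\bar\e > 0$ (with $m-\delta < m_\e - \bar\e$) and a homeomorphism $\Psi$ of $\H$ such that $\Psi$ is the identity outside $\tilde I_\e^{-1}([m_\e - \bar\e, m_\e + \bar\e])$ and $\Psi(\tilde I_\e^{m_\e + \bar\e}) \subset \tilde I_\e^{m_\e - \bar\e}$. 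Taking an $\eta \in \G_\e$ with $\max_{\Ce} \tilde I_\e \circ \eta < m_\e + \bar\e$ and composing, $\Psi \circ \eta$ again lies in $\G_\e$: indeed on $\partial\Ce \cup \tilde I_\e^{m/2}$ the map $\eta$ is the identity and the values there are $\le \max\{m-\delta, m/2\} < m_\e - \bar\e$, so $\Psi$ acts as the identity there too, whence $\Psi\circ\eta|_{\partial\Ce \cup \tilde I_\e^{m/2}} = \Id$. But then $\max_{\Ce} \tilde I_\e(\Psi(\eta(u))) \le m_\e - \bar\e$, contradicting the definition of $m_\e$ as an infimum. Hence $m_\e$ is a critical value; let $u_\e$ be a corresponding critical point, so $\tilde I_\e(u_\e) = m_\e$ and $\tilde I_\e'(u_\e) = 0$, i.e. $u_\e$ solves \eqref{eq:truncated}.

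It remains to check that $u_\e$ can be taken positive. Since $f$ (hence $g_\e$) has been extended by $0$ for negative arguments, testing the equation with the negative part $u_\e^-$ gives $\|u_\e^-\|^2 = \int g_\e(x,u_\e)u_\e^- = 0$ (using $\alpha_1 > 0$ in \ref{V0}), so $u_\e \ge 0$; then $-\Delta u_\e + V(\e x) u_\e = g_\e(x,u_\e) \ge 0$ and the strong maximum principle forces $u_\e > 0$ unless $u_\e \equiv 0$. The latter is excluded because $\tilde I_\e(u_\e) = m_\e \ge m - \delta > 0$ for $\e$ small, so $u_\e \not\equiv 0$ and $u_\e$ is a positive solution.

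The only delicate point, beyond invoking the deformation lemma correctly, is the bookkeeping that guarantees $\Psi \circ \eta \in \G_\e$ — that is, that the deformation does not move the part of $\Ce$ where members of $\G_\e$ are required to be the identity. This is exactly why one needs both halves of the estimate $m_\e \to m$: Proposition \ref{pr:bb} pins $\tilde I_\e|_{\partial\Ce}$ strictly below $m$, and Proposition \ref{fund} pushes $m_\e$ up to $m$, so the gap is genuinely positive for small $\e$ and one may choose $\bar\e$ small enough to respect it. I expect no real obstacle here; the content of the section is entirely in Proposition \ref{fund}, and this theorem is its harvest.
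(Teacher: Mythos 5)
Your argument is correct and is essentially the paper's own proof: the paper likewise combines Propositions \ref{estima} and \ref{fund} (so $m_\e\to m$), Proposition \ref{pr:bb} (so $m_\e>\max_{\partial\Ce}\tilde I_\e$ for small $\e$), and the Palais--Smale property of Proposition \ref{pr:PS}, invoking ``classical min-max theory'' for the step you spell out via the quantitative deformation lemma, and the maximum principle for positivity. Your explicit verification that $\Psi\circ\eta$ stays in $\G_\e$ (using $m_\e-\bar\e>\max\{m-\delta,\,m/2\}$) is exactly the bookkeeping the paper leaves implicit, so there is nothing to add.
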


\begin{proof}
 By Propositions \ref{estima} and \ref{fund}, we deduce that $m_{\e} \to m$ as $\e \to 0$. From Proposition \ref{pr:bb} we get that for small
 values of $\e$, $m_\e > \max_{\partial \Ce} \tilde{I}_{\e}$.
 Moreover, recall that $\tilde{I}_\e$ satisfies the (PS)
 condition, see Proposition \ref{pr:PS}. Therefore, classical min-max theory implies that $m_\e$ is a critical value
 of $\tilde{I}_\e$; let us denote $u_\e$ a critical point.
 Finally the fact that $u_\e$ is positive follows from
 the maximum principle.

\end{proof}

\subsection{Proof of Proposition \ref{fund}}

\

The rest of the section is devoted to the proof of Proposition
\ref{fund}. This proof will be divided in several lemmas and
propositions.

First we define $\pi_E$ as the orthogonal projection on $E$ and we
set $h_\eps : \RN \to E$ defined as $h_\eps (x)=\pi_E(x)
\chi_{B_3^\eps}(x)$, where $\chi_{B_3^\eps}$ is the characteristic
function related to $B_3^\eps$. Let us define a barycentre type
map $\b_\eps: \H\setminus \{0\} \to E$ such that for any $u\in
\H\setminus \{0\}$
\[
\b_\eps (u) =\frac{\irn h_\eps(x) u^2 \,d x}{\irn u^2 \, d x }.
\]

For a fixed $\d>0$ sufficiently small, let us define
\[
\Xi_\eps=\left\{ \Sigma \subset \H\setminus\{0\} \left|
\begin{array}{l}
\Sigma \hbox{ is connected and compact}
\\
\exists u_0,u_1 \in \Sigma \hbox{ s.t. }  \|u_0\|\le \d , \tilde
I_\eps(u_1)<0
\\
\forall u \in \Sigma,\  \b_\eps(u)=0
\end{array}
\right. \right\}.
\]

Let us observe that we have to require that $0 \notin \Sigma$
because the barycentre $\b_\eps$ is not well defined in $0$. We
also define the corresponding min-max value:
$$b_\eps=\inf_{\Sigma \in \Xi_\eps} \max_{u\in \Sigma}\tilde I_\eps
(u).$$

Observe that, since $\tilde{I}_\e \ge \Phi_{\alpha_1}$, we have:
$$ b_\e \ge m_{\alpha_1}>0.$$

\begin{lemma}\label{le:grado}
There exists $\e_0>0$ such that for any $\eps \in (0 ,\e_0)$ and
for any $\eta \in \G_\eps$ there exists $\Sigma\in \Xi_\eps$ such
that $\Sigma \subset \eta(\Ce)$.
\end{lemma}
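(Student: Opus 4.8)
The goal is to show that every deformation $\eta \in \G_\eps$ produces, inside its image $\eta(\Ce)$, a set $\Sigma$ belonging to the constrained class $\Xi_\eps$; in particular, a connected compact set on which the barycentre $\b_\eps$ vanishes and which connects a small function (near the vertex) to a function of negative energy. The natural strategy is to parametrize the cone $\Ce$ by $(t,\xi) \in [0,1] \times (\overline{B_0^\eps} \cap E)$ and to look at the composite map
\[
\Psi_\eps : [0,1] \times (\overline{B_0^\eps} \cap E) \longrightarrow E, \qquad \Psi_\eps(t,\xi) = \b_\eps\bigl(\eta(\gamma_t(\cdot - \xi))\bigr),
\]
and to find, for each $t$ in a suitable sub-interval, a zero $\xi(t)$ of $\Psi_\eps(t,\cdot)$, varying continuously (or at least in a connected fashion) with $t$. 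Taking the image under $\eta$ of the corresponding curve of functions $\gamma_t(\cdot - \xi(t))$ will give a candidate $\Sigma$. The existence of such a connected set of zeros is exactly where the classical Brouwer-degree property on connected branches of solutions (Leray--Schauder continuation, cf.\ \cite{leray-s, mawhin}) enters.

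\textbf{Key steps.} First I would note that on $\partial \Ce$ the deformation $\eta$ acts as the identity: by Proposition \ref{pr:bb}, $\tilde I_\eps|_{\partial \Ce} \le m - \delta < m/2$ is false in general, so one must be careful — rather, the points of $\partial\Ce$ with $t=1$ lie below level $0$ hence in $\tilde I_\eps^{m/2}$ only after checking; in any case $\eta$ fixes $\partial\Ce$ by definition. Hence for $\xi \in \partial B_0^\eps \cap E$ one has $\eta(\gamma_t(\cdot-\xi)) = \gamma_t(\cdot - \xi)$, and I would compute $\b_\eps(\gamma_t(\cdot - \xi))$ for such $\xi$: since $\gamma_t$ is (a dilation of) a radial ground state concentrated near the origin and $h_\eps$ essentially reads off $\pi_E$ of the center of mass, one gets $\b_\eps(\gamma_t(\cdot-\xi)) \approx \pi_E(\xi) = \xi$ for $\xi$ on the sphere $\partial B_0^\eps \cap E$ (using $R_3 > R_0$ so the cutoff $\chi_{B_3^\eps}$ is inactive there, plus the exponential decay estimates). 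Thus $\Psi_\eps(t,\cdot)$ is, on the boundary sphere of $\overline{B_0^\eps}\cap E$, a small perturbation of the identity and in particular non-vanishing and of Brouwer degree $1$. Second, the degree property then guarantees for each fixed $t$ a zero; but to get a \emph{connected} set I would invoke the standard result (Leray--Schauder / Mawhin) that the set
\[
\mathcal Z = \{ (t,\xi) : \Psi_\eps(t,\xi) = 0 \} \subset [0,1] \times (\overline{B_0^\eps}\cap E)
\]
contains a connected component $\mathcal C_0$ meeting both $\{t=0\}$ and $\{t=1\}$ — this is exactly the continuation-of-solutions principle, valid because the degree of $\Psi_\eps(t,\cdot)$ on the boundary sphere is $1 \ne 0$ for all $t$. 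Third, I would set $\Sigma = \eta(\{\gamma_t(\cdot - \xi) : (t,\xi) \in \mathcal C_0\})$. Compactness and connectedness of $\Sigma$ follow from continuity of $\eta$ and of $(t,\xi) \mapsto \gamma_t(\cdot-\xi)$ together with compactness of $\mathcal C_0$. The barycentre condition $\b_\eps(u) = 0$ for all $u \in \Sigma$ is built in. For the endpoint conditions: at $t = 0$, $\gamma_0 = 0$, so a neighboring point on $\mathcal C_0$ gives $\eta(\gamma_t(\cdot-\xi))$ with $\tilde I_\eps$-energy small, hence (by continuity and $\eta = \mathrm{id}$ near the vertex region, or by the a priori smallness of such functions) $\|u_0\| \le \delta$; at $t = 1$, $\gamma_1$ has $\Phi_\nu(\gamma_1) < 0$, and by Proposition \ref{pr:bb} (estimate \eqref{1}) $\tilde I_\eps(\gamma_1(\cdot-\xi)) < 0$, and since $\eta$ fixes such a point (it lies in $\partial\Ce$ or in $\tilde I_\eps^{m/2}$ — here one checks it is in the sublevel set so $\eta$ fixes it), $\tilde I_\eps(u_1) < 0$. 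So $\Sigma \in \Xi_\eps$.

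\textbf{Main obstacle.} The delicate point is not the degree computation but making the boundary behavior of $\b_\eps$ genuinely match the identity uniformly in $t \in [0,1]$: one needs that $\b_\eps(\gamma_t(\cdot-\xi))$, for $\xi \in \partial B_0^\eps \cap E$, stays close to $\xi$ \emph{uniformly in $t$}, which requires the uniform exponential decay of the family $\{\gamma_t\}$ (available from Lemma \ref{l:gamma}'s construction) and a careful treatment of the scale: $B_0^\eps = \eps^{-1} B_0$ is a large ball, the functions $\gamma_t$ are $O(1)$-concentrated, and one must verify that the mass of $\gamma_t(\cdot - \xi)$ that escapes $B_3^\eps$ (where $h_\eps$ is truncated) is $o_\eps(1)$ uniformly — this is where $R_3 > R_0$ and the exponential decay combine. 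A secondary subtlety is ensuring that the relevant endpoint function at $t=1$ is genuinely fixed by $\eta$, i.e.\ that it lies in $\tilde I_\eps^{m/2} \cup \partial\Ce$; this follows from \eqref{1} in Proposition \ref{pr:bb} since it has negative energy, hence lies in the sublevel $\tilde I_\eps^{m/2}$. Once these uniformities are in hand, the Leray--Schauder connected-branch argument applies verbatim and yields the claimed $\Sigma$.
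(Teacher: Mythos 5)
Your proposal is correct in substance and follows essentially the same route as the paper: compose the barycentre with $\eta$ to get $\psi^\e_t(\xi)=\b_\eps\big(\eta(\g_t(\cdot-\xi))\big)$, check via the uniform exponential decay that on $\partial B_0^\eps\cap E$ this is a small perturbation of the identity uniformly in $t$, conclude $\deg(\psi^\e_t,B_0^\eps\cap E,0)=1$, invoke the Leray--Schauder/Mawhin connected-branch property to obtain a compact connected set of zeros spanning the $t$-interval, and take its image under $\eta$ as $\Sigma$, the endpoint functions being fixed by $\eta$ because they lie in $\tilde I_\eps^{m/2}$. The only point to clean up is the vertex: since $\g_0=0$ and $\b_\eps$ is undefined there, the paper works on $[t_0,1]$ with $t_0>0$ small rather than on all of $[0,1]$, and the condition $\|u_0\|\le\d$ is obtained directly from $\|\g_{t_0}\|$ being small (so that its energy is below $m/2$ and $\eta$ fixes it), not from ``small energy implies small norm,'' which is not a valid implication; your parenthetical remark about the a priori smallness of such functions is the right argument and should replace the energy-based one.
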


\begin{proof}
Let us take $t_0>0$ sufficiently small, and $\eta \in \G_\eps$.
For any $t\in [t_0,1]$, we define $\psi^\e_t: \overline{B_0^\eps}
\cap E\to E$ such that
\[
\psi^\e_t(\xi)=\b_\eps\Big(\eta\big(\g_t(\cdot - \xi)\big)\Big).
\]
Let us observe that, by the properties of $\eta \in \G_\eps$,
$\eta\big(\g_t(\cdot - \xi)\big)\neq 0$, for all $t\in [t_0,1]$
and for all $\xi \in \overline{B_0^\eps} \cap E$, and so
$\psi^\e_t$ is well defined. Moreover, $\|\g_{t_0}\|$ can be made
arbitrary small by taking smaller $t_0$.
\\

Moreover, by the exponential decay of $\g_t$,
$$ \psi^\e_t(\xi) \to \xi \mbox{ uniformly in $\de B_0^\eps \cap E$ and $t\in [t_0,1]$, as } \e \to 0.$$

Therefore we can choose $\e$ small enough so that
\[
\deg(\psi^\e_t,B_0^\eps\cap E,0)=\deg(\Id,B_0^\eps\cap E,0)=1, \
\hbox{ for all }t\in [t_0,1].
\]

We can conclude that for every $t\in [t_0,1]$, there exists
$\xi\in B_0^\eps\cap E$ such that $\psi^\e_t(\xi)=0$. Moreover
there exists a connected and compact set $\Upsilon \subset [t_0,
1] \times (B_0^\e \cap E)$ that takes all values in $[t_0,1]$ and
such that $\psi^\e_t(\xi)=0$ for all $(t,\xi) \in \Upsilon$, (see
\cite{leray-s, mawhin}).

Then, it suffices to define
$$\Sigma = \{ \eta(\g_t(\cdot - \xi)) \mid (t,\xi) \in \Upsilon \}.$$

\end{proof}

As a consequence of the previous lemma, we obtain the following
inequality:
\begin{equation*} 
m_\eps \ge b_\eps.
\end{equation*}

Hence, the proof of Proposition \ref{fund} is completed if we
prove the following result:

\begin{proposition}\label{pr:b>=m}
We have that
\[
 \liminf_{\eps\to 0} b_\eps \ge m.
\]
\end{proposition}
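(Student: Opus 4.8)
The plan is to argue by contradiction: suppose that, along some sequence $\e = \e_n \to 0$, one has $b_\e \le m - 2\tau$ for a fixed $\tau > 0$. By definition of $b_\e$, for each $n$ there is an admissible set $\Sigma_n \in \Xi_{\e_n}$ with $\max_{u \in \Sigma_n} \tilde I_{\e_n}(u) \le m - \tau$. The goal is to extract from the family $\Sigma_n$ a single function $u_n \in \Sigma_n$ whose mass concentrates, up to translation, into a profile that is (asymptotically) an element of $\mathcal{S}$, and then to show that the constraint $\b_{\e_n}(u_n) = 0$ is incompatible with the energy bound $\tilde I_{\e_n}(u_n) \le m - \tau$. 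The heuristic reason is that a near-ground-state profile sitting at a point $y$ with $\e_n y$ far from $0$ would feel the potential $V$ at a value strictly larger than $V(0) = 1$ (by \ref{V0} together with isolatedness of the critical point), hence would carry energy bounded below by $m_{V(\e_n y)} > m$ by Lemma \ref{le:cm} — contradicting $\le m - \tau$; whereas if $\e_n y \to 0$, the profile sits on $E^\perp$ (since $V|_{E^\perp}$ has a strict minimum at $0$, moving off $0$ within $E^\perp$ again raises the energy past $m$), and there the barycentre map $\b_{\e_n}$, which projects onto $E$ and is essentially the identity near $0$, cannot vanish unless the mass sits exactly over $0$.

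First I would establish a \emph{splitting/concentration lemma} for the family $\Sigma_n$: since each $\Sigma_n$ is connected and compact, contains a point $u_0$ with $\|u_0\| \le \d$ (small energy, below the mountain-pass threshold) and a point $u_1$ with $\tilde I_{\e_n}(u_1) < 0$, a standard continuity/intermediate-value argument along $\Sigma_n$ produces $u_n \in \Sigma_n$ at which $\tilde I_{\e_n}(u_n)$ equals some fixed level in $(m_{\alpha_1}/2,\, m)$ and, more importantly, at which an associated quantity (e.g. the value of a suitable functional, or membership in a Nehari-type set for the frozen problem) is pinned; combined with $\tilde I_{\e_n}(u_n) \le m - \tau$ and the (PS)-type compactness machinery already available, a concentration-compactness / Lions-type dichotomy forces, up to translation by $y_n \in \R^N$ and up to subsequence, $u_n(\cdot + y_n) \weakto w \neq 0$ with $w$ a solution of a limit problem $-\Delta w + V(x_0) w = g(x_0, w)$ or $= \tilde f(w)$ according to whether $\e_n y_n \to x_0$ stays in $\overline{B_2}$ or escapes it. Vanishing is excluded because the energy stays above $m_{\alpha_1}>0$; splitting into two or more bumps is excluded (or reduced to the one-bump case) because two bumps would already cost energy $\ge 2 m_{\alpha_1} > m$ for $R_i$ chosen small, using $\tilde I_{\e_n} \ge \Phi_{\alpha_1}$ and $a < \alpha_1$.

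Next I would do the \emph{location analysis}. Case (i): $\e_n y_n$ does not converge to $0$. Then $w$ is a nontrivial solution of a limit problem whose linear coefficient is, in the liminf, at least $V(x_0) $ with $x_0 \neq 0$ a regular point or a point with $V(x_0) > 1$ — here one uses that $0$ is an \emph{isolated} critical point, so that either $V(x_0) > 1 = V(0)$ (maximum/saddle cases) giving ground-state level $\ge m_{V(x_0)} > m$ by Lemma \ref{le:cm}, or the truncation is active and $\tilde f$-ground states have even higher energy because $\tilde f \le f$ and the linear term is $\ge \alpha_1$; either way $\liminf \tilde I_{\e_n}(u_n) \ge m$, a contradiction. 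Case (ii): $\e_n y_n \to 0$. Then $w \in \mathcal{S}$ up to the usual arguments (the truncation is inactive near $0$ by \ref{en:g2}, since $B_1$ is a neighbourhood of $0$ and $\e_n y_n \to 0$ puts the bump inside $B_1^{\e_n}$ eventually), and $\tilde I_{\e_n}(u_n) \to m$, so the energy inequality is \emph{not} directly contradicted — the contradiction must come from the barycentre constraint. Since $\b_{\e_n}(u_n) = 0$ and $h_{\e_n}(x) = \pi_E(x)\chi_{B_3^{\e_n}}(x)$, and since the mass of $u_n$ concentrates at $y_n$ with $\e_n y_n \to 0 \in B_3$, one computes $\b_{\e_n}(u_n) = \pi_E(y_n) + o(1)\cdot(\text{scale})$; normalizing, $\e_n \b_{\e_n}(u_n) \to \pi_E(x_0) = \pi_E(0) = 0$ is automatically satisfied, so I need the finer information that $x_0 \in E^\perp$: this is forced because along the curve $\gamma_t(\cdot - \xi)$ with $\xi \in \overline{B_0^{\e}} \cap E$, and its deformations $\eta$, the barycentre being zeroed kills the $E$-component, leaving the profile centered over a point of $E^\perp$; but $V|_{E^\perp}$ has a \emph{strict local minimum} at $0$, so $x_0 \in E^\perp$, $x_0 \neq 0$ would give $V(x_0) > 1$ and hence energy $\ge m_{V(x_0)} > m$, contradiction — and $x_0 = 0$ is the only remaining possibility, which is consistent and yields exactly $\liminf b_{\e_n} \ge m$, contradicting the standing assumption $b_{\e_n} \le m - 2\tau$.

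\textbf{Main obstacle.} The delicate point is Case (ii) and, more precisely, making rigorous the claim that the barycentre constraint forces the concentration point onto $E^\perp$ rather than merely onto $\{0\}$ in the rescaled limit. A naive barycentre computation only controls $\pi_E(\e_n y_n) = \e_n \pi_E(y_n)$, which tends to $0$ for \emph{any} $x_0$; one needs a quantitative lower bound of the form $|\pi_E(y_n)| \le C$ (not just $o(\e_n^{-1})$), i.e. the $E$-component of the center of mass is bounded on the $\e$-scale, so that after rescaling by $\e_n$ it genuinely lands in $E^\perp$ in the limit and one can invoke the strict-minimum property of $V|_{E^\perp}$. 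Handling this requires carefully tracking how the barycentre $\b_{\e_n}$ localizes the mass (using the exponential decay of ground states to control the tails against $\chi_{B_3^{\e_n}}$) and coupling it with the energy expansion $\tilde I_{\e_n}(u_n) = m + \tfrac12(V(\e_n y_n) - 1)\|w\|_2^2 + o(1)$ to rule out the escape. A secondary technical nuisance is excluding multi-bump splitting cleanly with the truncated nonlinearity $g_{\e}$, which I would handle exactly as in \cite{dpf1, dpf2, dpf3} via the choice $a < (1 - 2/\mu)\alpha_1$ ensuring each nontrivial bump of a limit problem carries energy $\ge m_{\alpha_1}$ with $2 m_{\alpha_1} > m$ after shrinking the radii.
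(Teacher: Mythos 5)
Your overall strategy diverges from the paper's at the decisive point, and the divergence opens a genuine gap. The paper does not analyse arbitrary near-optimal sets $\Sigma\in\Xi_\eps$; it first uses the Palais--Smale condition and classical min-max theory on the \emph{constrained} problem to produce (Lemma \ref{le:lambda}) an actual constrained critical point $u_\eps$ at level $b_\eps$, i.e.\ a solution of $-\Delta u_\eps+V(\eps x)u_\eps=g_\eps(x,u_\eps)+\l_\eps\cdot h_\eps(x)u_\eps$ with a Lagrange multiplier $\l_\eps\in E$. It is only because $u_\eps$ solves an equation that the concentration-compactness decomposition (Proposition 4.2 of \cite{jt-cv}, and its variant Proposition \ref{pr:uuu}) applies and yields limit profiles solving frozen limit equations whose energies can be compared with $m_{V(\bar y_i)}$ via Lemma \ref{le:cm}. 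In your scheme the function $u_n\in\Sigma_n$ selected by an ``intermediate-value'' argument is just an element of a connected compact set with an energy bound; it is not an approximate critical point, so there is no reason its translates converge weakly to a \emph{solution} of any limit problem, and the splitting into bumps with quantized energies has no justification. The ``pinning'' you invoke (a Nehari-type constraint for the frozen problem) is precisely what is unavailable here, since hypothesis \ref{f4} is dropped --- that is the raison d'\^etre of the paper. Without either the equation or a Nehari structure, the core step of your argument (``concentration-compactness forces $u_n(\cdot+y_n)\weakto w\neq0$ with $w$ a solution'') fails.

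Two further remarks. First, the obstacle you single out as ``main'' is not one: once a one-bump decomposition with concentration point $y^1_\eps$ and $\e y^1_\eps\to\bar y_1\in\overline{B_2}$ is available, the constraint $\b_\eps(u_\eps)=0$ gives, after multiplying by $\e$ and passing to the limit, $\pi_E(\bar y_1)\irn u_1^2=0$, i.e.\ $\bar y_1\in E^\perp$; one then needs only $V(\bar y_1)\ge1$ (local minimum of $V|_{E^\perp}$, with $B_2$ small), not any quantitative bound like $|\pi_E(y_n)|\le C$ on the $\e$-scale. What you miss instead is the real difficulty the paper spends most of its effort on: the Lagrange multiplier. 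One must show $\l_\eps=O(\eps)$ (Lemma \ref{le:Oeps}, via an Esteban--Lions type identity), and when $\bar\l=\lim\l_\eps/\eps\neq0$ the limit equations acquire the extra term $\bar\l\cdot\bar y_i\,u_i$, the decomposition only holds on half-spaces $H_\eps$, and delicate sub-cases (2a) and (2b) are needed to recover the energy comparison. Second, your multi-bump exclusion via ``$2m_{\alpha_1}>m$'' is not correct as stated: $m_{\alpha_1}$ depends on the global bound $\alpha_1$ and cannot be improved by shrinking the radii; the paper instead uses that each $\bar y_i\in B_2$ and Lemma \ref{le:cm}, so each bump costs at least $m_{\inf_{B_2}V}$, which is close to $m$ when $B_2$ is small.
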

In order to prove this proposition, we will need some midway
lemmas.

\begin{lemma}\label{le:lambda}
There exists $\eps_0>0$ such that, for any $\eps\in (0,\eps_0)$,
there exist $u_\eps\in \H$, with $\b_\eps(u_\eps)=0$, and
$\l_\eps\in E$ such that
\begin{equation}\label{eq:lambda}
-\Delta u_\eps+V(\eps x) u_\eps=g_\eps( x, u_\eps) +\l_\eps \cdot
h_\eps(x) u_\eps,
\end{equation}
and
\[
\tilde I_\eps (u_\eps)=b_\eps.
\]
Moreover, the sequence $\{u_\eps\}$ is bounded in $\H$.
\end{lemma}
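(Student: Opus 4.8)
The plan is to obtain $u_\eps$ as a minimizer of $\tilde I_\eps$ over the constraint set of connected compact sets appearing in the definition of $b_\eps$, by first passing to a minimizing sequence of \emph{sets} and then extracting from them a minimizing \emph{sequence of functions} to which a deformation/Ekeland-type argument applies. More precisely, I would proceed as follows. First, fix $\eps \in (0,\eps_0)$ with $\eps_0$ small enough that Lemma \ref{le:grado} (and the estimates used there) hold. Consider the constrained functional obtained by restricting $\tilde I_\eps$ to the manifold $\mathcal{M}_\eps = \{ u \in \H \setminus \{0\} \mid \b_\eps(u) = 0\}$; note $\b_\eps$ is $C^1$ on $\H\setminus\{0\}$ since $h_\eps$ is bounded, so $\mathcal{M}_\eps$ is a $C^1$ submanifold of codimension $\dim E$ away from the origin. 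A critical point of $\tilde I_\eps|_{\mathcal{M}_\eps}$ satisfies, by the Lagrange multiplier rule, exactly equation \eqref{eq:lambda}, the multiplier $\l_\eps \in E$ being dual to the $\dim E$ constraints $\b_\eps(u)=0$; computing $\b_\eps'(u)$ shows the constraint gradient is (a multiple of) $h_\eps(x) u$, which accounts for the term $\l_\eps \cdot h_\eps(x) u_\eps$.

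The heart of the matter is then to show that $b_\eps$ is attained as a critical value of $\tilde I_\eps|_{\mathcal{M}_\eps}$. For this I would run a standard deformation argument adapted to the class $\Xi_\eps$: suppose $b_\eps$ is \emph{not} a critical value; by Proposition \ref{pr:PS} the functional $\tilde I_\eps$ satisfies (PS), and this restricts to a (PS) condition for $\tilde I_\eps|_{\mathcal{M}_\eps}$ (using that the constraint manifold is "uniformly non-degenerate" away from $0$ on sub-levels where $\|u\|$ is bounded below — which holds because $b_\eps \ge m_{\alpha_1} > 0$ forces elements of near-optimal sets to stay away from $0$). Hence there is a deformation $\Theta$ of $\H\setminus\{0\}$, leaving $\mathcal{M}_\eps$ invariant and fixing $\tilde I_\eps^{m_{\alpha_1}/2}$ and the low-energy part, which pushes $\tilde I_\eps^{b_\eps+\mu}\cap\mathcal{M}_\eps$ into $\tilde I_\eps^{b_\eps-\mu}\cap\mathcal{M}_\eps$. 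Applying $\Theta$ to an almost-optimal $\Sigma \in \Xi_\eps$ produces a new admissible set (connectedness, compactness, the anchor points $u_0$ with $\|u_0\|\le\d$ and $u_1$ with $\tilde I_\eps(u_1)<0$, and the constraint $\b_\eps=0$ are all preserved, the first two trivially, the anchors because the deformation fixes the extreme sublevels and $\d$ is chosen small, the constraint because $\mathcal{M}_\eps$ is invariant) with max below $b_\eps - \mu$, contradicting the definition of $b_\eps$. Thus $b_\eps$ is a critical value and we get $u_\eps$ with $\tilde I_\eps(u_\eps)=b_\eps$, $\b_\eps(u_\eps)=0$, and \eqref{eq:lambda}.

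Finally, boundedness of $\{u_\eps\}$ in $\H$ follows from the same Ambrosetti–Rabinowitz computation as in Proposition \ref{pr:PS}, provided one controls the extra term $\l_\eps\cdot h_\eps(x)u_\eps^2$. Testing \eqref{eq:lambda} with $u_\eps$ and combining with $\tilde I_\eps(u_\eps)=b_\eps$, one forms $\mu\tilde I_\eps(u_\eps)-\tilde I_\eps'(u_\eps)[u_\eps]$, where the critical-point relation now reads $\tilde I_\eps'(u_\eps)[u_\eps] = \l_\eps\cdot\int h_\eps(x)u_\eps^2$; but $\b_\eps(u_\eps)=0$ means precisely $\int h_\eps(x)u_\eps^2 = 0$, so this spurious term vanishes identically and the estimate $\mu b_\eps \ge c\|u_\eps\|^2$ goes through verbatim, using \eqref{eq:a} and $\tilde F \le \frac12 a s^2$ to absorb the truncated part. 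Since $b_\eps$ is bounded above (by Proposition \ref{estima} together with $m_\eps \ge b_\eps$, or directly), $\{u_\eps\}$ is bounded. The main obstacle I expect is the deformation step: one must verify carefully that the pseudo-gradient flow for the \emph{constrained} functional stays in $\H\setminus\{0\}$ and does not destroy compactness/connectedness of the sets in $\Xi_\eps$ — i.e. that the class $\Xi_\eps$ is genuinely stable under the relevant deformations — and that the (PS) condition survives restriction to $\mathcal{M}_\eps$ uniformly on the relevant energy range.
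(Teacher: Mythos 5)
Your proposal is correct and follows essentially the same route as the paper: a constrained min-max at level $b_\eps$ on $\{\b_\eps=0\}$, with equation \eqref{eq:lambda} coming from a Lagrange multiplier in $E$ and boundedness from the Ambrosetti--Rabinowitz computation of Proposition \ref{pr:PS}, the multiplier term disappearing exactly as you say because $\b_\eps(u_\eps)=0$. The only difference is packaging: instead of a constrained deformation lemma (the step you flag as delicate), the paper takes a constrained Palais--Smale sequence at level $b_\eps$ provided by classical min-max theory and proves its strong convergence directly, the multiplier causing no harm since it vanishes when testing with $u_n$ (by the constraint) and in the tail estimate one chooses the cut-off $\phi_R$ so that $\phi_R h_\eps=0$.
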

\begin{proof}
Let $\eps>0$ be fixed. By classical min-max theory, there exists a
sequence $\{u_n\} \subset \H$ which is a constrained (PS) sequence
at level $b_\eps$, namely, there exists $\{\l_n\}\subset E$ such
that
\begin{align}
\tilde I_\eps(u_n) \to b_\eps, &\ \hbox{ as }n\to +\infty,
\label{eq:psl1}
\\
\tilde I_\eps'(u_n)-  \frac{\l_n \cdot h_\eps(x)u_n}{\irn u_n^2}
\to 0, &\ \hbox{ as }n\to +\infty. \label{eq:psl2}
\end{align}
Since $\b_\eps(u_n)=0$, by \eqref{eq:psl1} and \eqref{eq:psl2}
repeating the arguments of Proposition \ref{pr:PS}, we get that
$\{u_n\}$ is bounded in the $H^1-$norm, (uniformly with respect to
$\eps$) and, therefore, up to a subsequence, it converges weakly
to some $u\in \H$. This convergence is actually strong arguing as
in the proof of Proposition \ref{pr:PS} and choosing $R$ big
enough such that $\phi_R h_\eps=0$.

\end{proof}

\begin{lemma}\label{le:L2uL2}
There holds $u_\eps \chi_{B_2^\eps}\nrightarrow 0$ in $L^2(\RN)$
as $\eps \to 0$.
\end{lemma}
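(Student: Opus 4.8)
The plan is to argue by contradiction, assuming $u_\eps \chi_{B_2^\eps} \to 0$ in $L^2(\RN)$ along some sequence $\eps \to 0$, and to show that this forces $\tilde I_\eps(u_\eps) = b_\eps$ to drop below the lower bound $m_{\alpha_1} > 0$ (or at least to produce a contradiction with $b_\eps \ge m_{\alpha_1}$), hence the assumption is untenable. The key observation is that, by \ref{en:g2}, the truncated nonlinearity $g_\eps(x,s)$ coincides with the "small" nonlinearity $\tilde f(s)$ outside $B_2^\eps$, and by \ref{en:ft2} we have $\tilde F(s) \le \frac12 a s^2$ with $a < (1-2/\mu)\alpha_1$. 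So away from $B_2^\eps$ the functional $\tilde I_\eps$ is essentially the coercive quadratic form $\frac12 \int |\nabla u|^2 + \frac12 \int (V(\eps x) - a) u^2$, while on $B_2^\eps$ the full nonlinearity acts but the $L^2$ mass there is going to zero.

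First I would use the equation \eqref{eq:lambda} tested against $u_\eps$ itself, together with the energy identity, to write $\mu \tilde I_\eps(u_\eps) - \tilde I_\eps'(u_\eps)[u_\eps]$ (accounting for the Lagrange multiplier term, which is harmless since $\b_\eps(u_\eps)=0$ makes $\int h_\eps u_\eps^2 = 0$). As in Proposition \ref{pr:PS} this gives a bound $\mu b_\eps \ge c\|u_\eps\|^2$ from below, but more usefully I would extract, from the splitting of the integrals over $B_2^\eps$ and its complement, that
\[
\tilde I_\eps(u_\eps) \ge \Big(\tfrac{\mu}{2}-1\Big)\Big(\int_{(B_2^\eps)^c}|\nabla u_\eps|^2 + (\alpha_1 - a)\int_{(B_2^\eps)^c} u_\eps^2\Big) - C\int_{B_2^\eps}\big(|\nabla u_\eps|^2 + u_\eps^2\big) + \text{(full nonlinearity on } B_2^\eps).
\]
Next I would control the contribution of $B_2^\eps$: using the local $H^1$ bound on $u_\eps$ (which is uniform in $\eps$ by Lemma \ref{le:lambda}), Gagliardo–Nirenberg/Sobolev on the ball $B_2^\eps$, interpolation, and the growth estimate \ref{en:fg} for $F$, the quantity $\int_{B_2^\eps} F(u_\eps)$ and $\int_{B_2^\eps} u_\eps^2$ are estimated by powers of $\|u_\eps\chi_{B_2^\eps}\|_{L^2}$ times the bounded factor $\|u_\eps\|_{H^1(B_2^\eps)}^{\,\theta}$; hence both tend to zero under the contradiction hypothesis. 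Combining, $\tilde I_\eps(u_\eps)$ would then be bounded below by a coercive expression in $u_\eps$ restricted to $(B_2^\eps)^c$ plus a vanishing error.

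The point that closes the argument is that $\Sigma = $ a minimizing set for $b_\eps$ contains $u_1$ with $\tilde I_\eps(u_1) < 0$, so $b_\eps$ is attained at $u_\eps$ with $\tilde I_\eps(u_\eps) = b_\eps \ge m_{\alpha_1} > 0$; but if $u_\eps\chi_{B_2^\eps}\to 0$ then the gradient and $L^2$ norms of $u_\eps$ must carry all the energy from $(B_2^\eps)^c$, where $\tilde I_\eps$ is a positive-definite quadratic form with no negative (nonlinear) contribution and no mountain-pass geometry — so the only way to keep $\tilde I_\eps(u_\eps)$ bounded below by a positive constant while having a path/set reaching negative values through such a $u_\eps$ leads to a contradiction. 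More concretely, I expect to show $u_\eps \to 0$ strongly in $H^1$ under the contradiction hypothesis (since the full nonlinearity, the only source of negativity, lives on $B_2^\eps$ where the mass vanishes), which contradicts $\tilde I_\eps(u_\eps) \ge m_{\alpha_1} > 0$. The main obstacle will be handling the gradient term $\int_{B_2^\eps}|\nabla u_\eps|^2$: vanishing of the $L^2$ mass on $B_2^\eps$ does not immediately give vanishing of the gradient there, so one must use the equation \eqref{eq:lambda} locally (e.g. testing with a cutoff of $u_\eps$ supported near $B_2^\eps$, as in the Caccioppoli-type estimate in the proof of Proposition \ref{pr:PS}) to trade the gradient for lower-order terms and the right-hand side, which in turn are small. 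Controlling the Lagrange multiplier $\lambda_\eps$ in these local estimates will require knowing $\lambda_\eps$ stays bounded, which should follow by testing \eqref{eq:lambda} against a suitable function adapted to $h_\eps$.
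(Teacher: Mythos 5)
Your core mechanism is the right one and is essentially the paper's own argument run in contrapositive form: test \eqref{eq:lambda} with $u_\eps$ itself (the multiplier term drops exactly because $\b_\eps(u_\eps)=0$ gives $\irn h_\eps u_\eps^2=0$), use that $g_\eps(x,s)s\le a s^2$ outside $B_2^\eps$ while $g_\eps(x,s)s\le \d s^2+C_\d|s|^{p+1}$ on $B_2^\eps$, and $a+\d<\a_1$, to get $c\|u_\eps\|^2\le C\int_{B_2^\eps}u_\eps^{p+1}$; then interpolate the $L^{p+1}(B_2^\eps)$ norm between $L^2(B_2^\eps)$ and $L^{s}$, $s>p+1$, using the uniform $H^1$ bound of Lemma \ref{le:lambda}. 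The paper concludes directly (Sobolev gives $\|u_\eps\|^2_{L^{p+1}(B_2^\eps)}\le C\|u_\eps\|^2\le C\|u_\eps\|^{p+1}_{L^{p+1}(B_2^\eps)}$, hence a positive lower bound on $\|u_\eps\|_{L^{p+1}(B_2^\eps)}$ since $b_\eps\ge m_{\a_1}>0$ rules out $u_\eps\to0$, and interpolation transfers it to $L^2(B_2^\eps)$), whereas you argue by contradiction; these are the same computation.

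The genuine problem in your write-up is the ``main obstacle'' you identify and the fix you propose for it. The obstacle is self-created: in the global identity obtained by testing with $u_\eps$, the gradient appears over all of $\RN$ on the left with a positive sign, so under your contradiction hypothesis $c\|u_\eps\|^2\le C\int_{B_2^\eps}u_\eps^{p+1}\to0$ already yields $u_\eps\to0$ in $H^1$, hence $\tilde I_\eps(u_\eps)\to 0<m_{\a_1}\le b_\eps$, with no need to estimate $\int_{B_2^\eps}|\n u_\eps|^2$ separately (relatedly, the displayed inequality in your plan with the term $-C\int_{B_2^\eps}(|\n u_\eps|^2+u_\eps^2)$ is not what the $\mu\tilde I_\eps(u_\eps)-\tilde I_\eps'(u_\eps)[u_\eps]$ computation produces: the gradient enters with the positive factor $\frac{\mu}{2}-1$ everywhere). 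If instead you insist on local tests with cut-offs supported near $B_2^\eps$, the multiplier term no longer vanishes, and your plan to first prove that $\l_\eps$ is bounded is circular at this stage: in the paper the bound $\l_\eps=O(\eps)$ is Lemma \ref{le:Oeps}, whose proof relies precisely on Lemma \ref{le:L2uL2} (one needs a lower bound on $\int_{B_3^\eps}u_\eps^2$ to divide out $|\l_\eps|$), and testing against functions ``adapted to $h_\eps$'' runs into the same need for such a lower bound. So keep the global test, drop the local Caccioppoli step and the $\l_\eps$ bound, and your argument closes.
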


\begin{proof}
Since $u_\eps$ is a solution of (\ref{eq:lambda}) with
$\b_\eps(u_\eps)=0$, multiplying (\ref{eq:lambda}) by $u_\eps$,
integrating and using \ref{en:fg}, for a fixed sufficiently small
$\d>0$, we have
\begin{equation*}
\irn |\n u_\eps|^2+V(\eps x)u_\eps^2 =\irn g_\eps( x,u_\eps)u_\eps
\le \irn (a+\d) u_\eps^2 +C\int_{B_2^\eps}u_\eps^{p+1}.
\end{equation*}
Then
\begin{equation*}
\|u_\eps\|_{L^{p+1}(B_2^\eps)}^2 \le C \|u_\eps\|^2 \le
C\|u_\eps\|_{L^{p+1}(B_2^\eps)}^{p+1},
\end{equation*}
and
\[
u_\eps \chi_{B_2^\eps}\nrightarrow 0,\quad \hbox{in }L^{p+1}(\RN).
\]
Now, by the boundedness of $\{u_\eps\}$ in $\H$ and so in
$L^s(\RN)$, for a certain $s>p+1$, we can conclude by
interpolation, indeed, for a suitable $\a<1$:
\[
0<c\le \|u_\eps\|_{L^{p+1}(B_2^\eps)} \le
\|u_\eps\|_{L^{2}(B_2^\eps)}^{\a}
\|u_\eps\|_{L^{s}(B_2^\eps)}^{1-\a} \le C
\|u_\eps\|_{L^{2}(B_2^\eps)}^{\a}.
\]
\end{proof}

\begin{lemma}\label{le:L4c}
We have that $\|u_\eps\|_{H^1((B_4^\eps)^c)}\to 0$.
\end{lemma}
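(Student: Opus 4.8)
The statement says that the solutions $u_\eps$ of \eqref{eq:lambda} (with $\tilde I_\eps(u_\eps) = b_\eps$ and $\b_\eps(u_\eps) = 0$) carry no $H^1$-mass outside the ball $B_4^\eps$ in the limit $\eps \to 0$. The plan is to run the same localized-testing argument used in the proof of Proposition \ref{pr:PS}, i.e.\ multiply \eqref{eq:lambda} by $\phi u_\eps$ for a suitable cutoff $\phi$ supported away from the origin, and exploit the fact that on the complement of $B_2^\eps$ the nonlinearity $g_\eps(x,\cdot)$ has been truncated so that $g_\eps(x,s)s \le a s^2$ with $a$ small (see \eqref{eq:a} and the definition of $\tilde f$). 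Crucially, once the support of $\phi$ is inside $(B_3^\eps)^c$ the term $\l_\eps \cdot h_\eps(x)$ also drops out, since $h_\eps$ is supported in $B_3^\eps$ — so the Lagrange multiplier plays no role here.

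Concretely, first I would fix $\rho > R_3$ and choose a cutoff $\phi = \phi_{\eps}$ with $\phi = 0$ on $B_{\rho}^\eps$ (equivalently $B(0,\rho/\eps)$), $\phi = 1$ on $(B_4^\eps)^c$ — wait, this needs rescaling: better to keep it in the original variable, $\phi = 0$ on $B(0, \rho)$, $\phi = 1$ on $B(0, 2\rho)^c$, $0 \le \phi \le 1$, $|\n \phi| \le C/\rho$, and then use $\phi(\eps \cdot)$ as a cutoff in the $x$-variable. For $\eps$ small, $B_3^\eps \subset B(0,\rho/\eps)$, so $\phi(\eps x) h_\eps(x) \equiv 0$ and also $\chi_\eps \equiv 0$ on the support of $\phi(\eps\cdot)$, whence $g_\eps(x, u_\eps) = \tilde f(u_\eps)$ there. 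Testing \eqref{eq:lambda} with $\phi(\eps\cdot) u_\eps$ and using $\tilde f(s)s \le a s^2$, $V(\eps x) \ge \a_1$, together with $\int \n u_\eps \cdot \n(\phi(\eps\cdot)) u_\eps = O(\eps/\rho)\|u_\eps\|^2 = O(\eps/\rho)$ by the uniform $H^1$-bound from Lemma \ref{le:lambda}, one gets
\[
\int_{B(0,2\rho/\eps)^c} \!\!\big(|\n u_\eps|^2 + \a_1 u_\eps^2\big) \le a \int_{\RN} u_\eps^2 + o_\eps(1).
\]
This alone is not enough: the right-hand side does not vanish. The extra ingredient is Lemma \ref{le:L2uL2}, which combined with the $H^1$-bound and standard compactness/translation arguments should give that $u_\eps$ (after translating by a point in $B_2^\eps$) converges to a nonzero limit and that the tails $\int_{(B_R^\eps)^c} u_\eps^2 \to 0$ as $R$, then $\eps$, are controlled. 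More precisely, one should first establish that $\int_{(B_4^\eps)^c} u_\eps^2 \to 0$ and then bootstrap via the inequality above (replacing $a\int u_\eps^2$ by $a \int_{(B(0,2\rho/\eps))^c} u_\eps^2$, which one can absorb since $\phi(\eps\cdot) u_\eps$ is tested — actually the truncation already gives $\tilde f(u_\eps)u_\eps \le a u_\eps^2$ only, so absorbing requires the $L^2$-smallness of the tail).

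The main obstacle, then, is upgrading the crude bound above to genuine decay, i.e.\ proving $\int_{(B_4^\eps)^c} u_\eps^2 \to 0$. I expect this to follow from the energy identity $\tilde I_\eps(u_\eps) = b_\eps$ together with $\liminf b_\eps \ge m$ being \emph{not yet available} (that is what we are proving), so one must argue more carefully — presumably by a concentration-compactness dichotomy for $\{u_\eps\}$: the $H^1$-bound plus Lemma \ref{le:L2uL2} rules out vanishing, and the barycentre constraint $\b_\eps(u_\eps)=0$ together with the geometry of $h_\eps$ prevents the mass from escaping to $(B_3^\eps)^c$, forcing concentration in a bounded (rescaled) region, which yields the $L^2$-tail smallness; feeding this back into the Caccioppoli-type inequality closes the argument and gives the $H^1$-decay. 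I would carry out the steps in the order: (1) cutoff estimate eliminating $\l_\eps$ and reducing to $\tilde f$; (2) $L^2$-tail control via compactness + Lemma \ref{le:L2uL2} + the barycentre constraint; (3) substitute back to get $\|u_\eps\|_{H^1((B_4^\eps)^c)} \to 0$.
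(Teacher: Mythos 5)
Your step (1) is exactly the paper's proof, but you mishandle the absorption and then conclude, wrongly, that it is ``not enough''. When you test \eqref{eq:lambda} with $\phi u_\eps$, where $\phi$ vanishes on $B_3^\eps$ and equals $1$ outside $B_4^\eps$ (so that both $h_\eps\phi\equiv 0$ and $\chi_\eps\phi\equiv 0$), the nonlinear term comes with the cutoff weight: $\irn g_\eps(x,u_\eps)u_\eps\phi\le a\irn u_\eps^2\phi$, \emph{not} $a\irn u_\eps^2$ as in your displayed inequality. Since $V(\eps x)\ge\a_1>a$ by \ref{V0} and \eqref{eq:a}, this term is absorbed immediately into the left-hand side, giving
\begin{equation*}
\irn\bigl(|\n u_\eps|^2+(\a_1-a)u_\eps^2\bigr)\phi\le\Bigl|\irn u_\eps\,\n u_\eps\cdot\n\phi\Bigr|\le C\eps\|u_\eps\|^2=O(\eps),
\end{equation*}
using $|\n\phi|\le C\eps$ and the uniform $H^1$-bound of Lemma \ref{le:lambda}; hence $\|u_\eps\|^2_{H^1((B_4^\eps)^c)}\le C\eps\to 0$. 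No $L^2$-tail smallness is needed to absorb, and neither Lemma \ref{le:L2uL2} nor any concentration-compactness dichotomy enters.

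Your proposed steps (2)--(3) are therefore superfluous, and step (2) contains a genuine error: you claim the barycentre constraint $\b_\eps(u_\eps)=0$ ``prevents the mass from escaping to $(B_3^\eps)^c$''. But $h_\eps(x)=\pi_E(x)\chi_{B_3^\eps}(x)$ vanishes identically outside $B_3^\eps$, so the constraint is completely blind to any mass located there (and, being $E$-valued, it gives no information in the $E^\perp$ directions in any case). What actually confines the mass is the truncation itself, i.e.\ $\tilde f(s)s\le as^2$ with $a<\a_1$, which makes the functional coercive outside $B_2^\eps$ -- precisely the mechanism your step (1) already exploits once the cutoff weight is kept on the right-hand side.
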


\begin{proof}
Let $\phi_\eps :\RN\to \R$ be a smooth function such that
\[
\phi_\eps(x)= \left\{
\begin{array}{ll}
0 & \hbox{in }B_3^\eps,
\\
1 & \hbox{in }(B_4^\eps)^c,
\end{array}
\right.
\]
and with $0\le  \phi_\eps \le 1$ and $|\n \phi_\eps|\le C\eps$.
\\
By Lemma \ref{le:lambda}, since $\phi_\eps h_\eps=0$, we have that
\[
\tilde I_\eps'(u_\eps)[\phi_\eps u_\eps]=0,
\]
namely, by definition of $g_\e$,
\[
\irn (|\n u_\eps|^2+V(\eps x)u_\eps^2)\phi_\eps +\irn u_\eps \n
u_\eps \cdot \n \phi_\eps =\irn g_\eps( x,u_\eps)u_\eps \phi_\eps
\le  \irn a u_\eps^2 \phi_\e,
\]
and so we can conclude observing that
\[
\int_{(B_4^\eps)^c}|\n u_\eps|^2+u_\eps^2 \le C\eps.
\]
\end{proof}

\begin{lemma}\label{le:Oeps}
We have that $\l_\eps=O(\eps)$.
\end{lemma}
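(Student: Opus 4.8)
\textbf{Proof strategy for Lemma \ref{le:Oeps}.} The plan is to test equation \eqref{eq:lambda} with well-chosen functions in order to solve for $\l_\eps$ and extract the order $O(\eps)$. Since $\l_\eps \in E$, it is determined by the system of equations obtained by multiplying \eqref{eq:lambda} by $u_\eps$ times each coordinate function of the barycentre; concretely, for a basis $\{e_j\}$ of $E$, one tests with $\varphi = (e_j \cdot x)\,\chi_{B_3^\eps}(x)\, u_\eps$, or more naturally with $\de_{x_i} u_\eps$ truncated appropriately, exploiting that $h_\eps(x) = \pi_E(x) \chi_{B_3^\eps}(x)$ is linear on $B_3^\eps$. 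The key structural fact is that $\b_\eps(u_\eps)=0$ forces the ``source'' terms coming from $g_\eps$ and the potential to cancel at leading order, so that $\l_\eps$ is governed by the remainder terms, which are of size $\eps$.

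First I would fix an orthonormal basis $e_1, \dots, e_d$ of $E$ and, for each $j$, test \eqref{eq:lambda} against $\phi_j := (e_j \cdot x) u_\eps \, \chi_{B_3^\eps}(x)$ — or a smoothed version with gradient bounded by $C\eps$ to avoid regularity issues at $\de B_3^\eps$, using Lemma \ref{le:L4c} to control the error from the cutoff region $B_4^\eps \setminus B_3^\eps$. This yields
\[
\l_\eps \cdot \irn h_\eps(x) (e_j \cdot x) u_\eps^2 \,dx = \irn \bigl(\n u_\eps \cdot \n \phi_j + V(\eps x) u_\eps \phi_j - g_\eps(x,u_\eps)\phi_j\bigr)\,dx.
\]
On the left, $\irn h_\eps(x)(e_j\cdot x) u_\eps^2 = \irn (e_j \cdot x)^2 \chi_{B_3^\eps} u_\eps^2\, e_j + (\text{off-diagonal})$, and by Lemma \ref{le:L2uL2} together with Lemma \ref{le:L4c} the diagonal term $\irn (e_j\cdot x)^2 \chi_{B_3^\eps} u_\eps^2$ is bounded below by a positive constant (the mass does not escape $B_2^\eps$, where $|e_j \cdot x|$ is comparable to $|x|$ away from the origin — one may need to first rule out concentration exactly at $0$, which follows from $\b_\eps(u_\eps)=0$ combined with the lower bound of Lemma \ref{le:L2uL2}); so the matrix coupling $\l_\eps$ to the left-hand sides is uniformly invertible. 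Hence it suffices to show the right-hand side is $O(\eps)$.

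The right-hand side: integrating by parts, $\irn \n u_\eps \cdot \n \phi_j = \irn (e_j \cdot x)\chi_{B_3^\eps} |\n u_\eps|^2 + \irn u_\eps \n u_\eps \cdot \n\bigl((e_j\cdot x)\chi_{B_3^\eps}\bigr)$; the delicate point is that $(e_j\cdot x)$ is \emph{not} bounded on $B_3^\eps$ — it is of size $O(\eps^{-1})$. This is where the barycentre constraint must be used again, more carefully: the combination appearing is exactly $\tilde I_\eps'(u_\eps)[(e_j \cdot x)u_\eps \chi_{B_3^\eps}]$ up to the $\l_\eps$ term, and one rewrites it so that the large factor $e_j \cdot x$ only multiplies quantities that are themselves $O(\eps^2)$ in the relevant norm, namely $\n V(\eps x) = O(\eps)$ after a translation centering at a concentration point and using $\b_\eps(u_\eps)=0$ to kill the $O(1)$ contribution. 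I expect the main obstacle to be precisely this bookkeeping: controlling the interaction between the unbounded weight $e_j \cdot x$ on $B_3^\eps$ and the solution $u_\eps$, and showing that after using the barycentre condition and the exponential/$H^1$ decay of $u_\eps$ outside $B_2^\eps$ (Lemma \ref{le:L4c}), every surviving term carries at least one factor of $\eps$ — the cleanest route being to write the test function as a difference centered at the (essentially unique, by the mass estimates) concentration point $y_\eps$ with $\eps y_\eps$ bounded, so that $V(\eps x) - V(\eps y_\eps) = O(\eps |x - y_\eps|)$ on the support where $u_\eps$ is non-negligible.
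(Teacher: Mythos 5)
Your proposal has a genuine gap, and it sits exactly where you defer the ``bookkeeping''. Testing \eqref{eq:lambda} with $\varphi_j=(e_j\cdot x)\,\chi_{B_3^\eps}(x)\,u_\eps$ leaves on the non-multiplier side essentially $\int_{B_3^\eps}(e_j\cdot x)\bigl(|\n u_\eps|^2+V(\eps x)u_\eps^2-g_\eps(x,u_\eps)u_\eps\bigr)$ plus cutoff errors, and there is no identity turning this into something carrying a factor $\n V(\eps x)=O(\eps)$: the three terms do not cancel against each other, and the barycentre condition only kills $\irn \pi_E(x)\chi_{B_3^\eps}u_\eps^2$, not weighted energy integrals. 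Splitting $e_j\cdot x=e_j\cdot y_\eps+e_j\cdot(x-y_\eps)$ around a concentration point, the best available information is: the full-space integral of $|\n u_\eps|^2+V(\eps x)u_\eps^2-g_\eps(x,u_\eps)u_\eps$ vanishes (test the equation with $u_\eps$ and use $\b_\eps(u_\eps)=0$), its restriction to $B_3^\eps$ is small only through tail estimates as in Lemma \ref{le:L4c}, and it is then multiplied by $e_j\cdot y_\eps$, which can be of order $\eps^{-1}$; in the regime where $y_\eps$ stays bounded, the local weighted integral is merely $o(1)$ with no rate. So your scheme yields at best $\l_\eps=O(1)$, not $O(\eps)$, unless you first prove a quantitative ($O(\eps)$) rate of convergence of $u_\eps$ to a limit profile --- which is not available at this point; note also that the splitting of Proposition \ref{pr:uuu} is proved \emph{after}, and uses, this lemma, and that the uniform invertibility of your coupling matrix itself leans on that yet-unproved concentration structure.

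The device that makes the lemma work --- which you mention only in passing (``or more naturally with $\de_{x_i}u_\eps$'') and then abandon --- is the Esteban--Lions trick: multiply \eqref{eq:lambda} by $\phi_\eps\,\de_{\tilde\l_\eps}u_\eps$, where $\tilde\l_\eps=\l_\eps/|\l_\eps|$ and $\phi_\eps$ is a cutoff equal to $1$ on $B_2^\eps$, supported in $B_3^\eps$, with $|\n\phi_\eps|\le C\eps$. Then every term except the multiplier term is a total $\de_{\tilde\l_\eps}$-derivative up to remainders explicitly carrying $\n\phi_\eps$, $\eps\,\n V(\eps\,\cdot)$ or $\eps\,\n\chi(\eps\,\cdot)$, hence is $O(\eps)$ with no convergence rate needed; the multiplier term, because $x\mapsto\l_\eps\cdot\pi_E(x)$ is linear on $B_3^\eps$ with $\de_{\tilde\l_\eps}(\l_\eps\cdot x)=|\l_\eps|$, produces exactly $-\tfrac12|\l_\eps|\int_{B_3^\eps}u_\eps^2\phi_\eps+O(\eps)$, and Lemma \ref{le:L2uL2} keeps $\int_{B_2^\eps}u_\eps^2$ away from zero, so $|\l_\eps|=O(\eps)$. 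Only the single direction $\tilde\l_\eps$ is needed: no linear system, no basis of $E$, no concentration points. As written, your argument does not reach the conclusion.
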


\begin{proof}
In the sequel we can suppose that $\l_\eps\neq 0$, otherwise the
lemma is proved. Let us denote $\tilde \l_\eps=\l_\eps /|\l_\eps
|$.
\\
Let $\phi_\eps:\RN\to \R$ be a smooth function such that
\[
\phi_\eps(x)=\left\{
\begin{array}{ll}
1 & \hbox{in }B_2^\eps,
\\
0 & \hbox{in }(B_3^\eps)^c,
\end{array}
\right.
\]
with $0\le \phi_\eps \le 1$ and $|\n \phi_\eps|\le C\eps$.
\\
We follow an idea of \cite{esteban,esteban2}. By regularity
arguments $u_\eps \in H^2(\RN)$ and then we are allowed to
multiply \eqref{eq:lambda} by $\phi_\eps \de_{\tilde \l_\eps}
u_\eps$ and to integrate by parts. Then
\begin{align}
&\int_{B_3^\eps}\big[\n u_\eps\cdot \n(\de_{\tilde \l_\eps} u_\eps
)\big]\phi_\eps +\int_{B_3^\eps\setminus B_2^\eps}(\n u_\eps\cdot
\n \phi_\eps )(\de_{\tilde \l_\eps} u_\eps ) \nonumber
\\
&\qquad +\int_{B_3^\eps}V(\eps x)u_\eps(\de_{\tilde \l_\eps}
u_\eps )\phi_\eps -\int_{B_3^\eps} g_\eps( x, u_\eps)(\de_{\tilde
\l_\eps} u_\eps )\phi_\eps \nonumber
\\
&\quad =\int_{B_3^\eps}(\l_{\eps} \cdot h_\eps(x))
u_\eps(\de_{\tilde \l_\eps} u_\eps )\phi_\eps. \label{eq:**}
\end{align}
Let us evaluate each term of the previous equality. We have
\[
0=\irn \de_{\tilde \l_\eps} \big[|\n u_\eps|^2 \phi_\eps\big]
=2\irn\big[\n u_\eps\cdot \n(\de_{\tilde \l_\eps} u_\eps
)\big]\phi_\eps +\irn |\n u_\eps|^2 \de_{\tilde \l_\eps} \phi_\eps
,
\]
and so
\begin{equation}\label{eq:lnu}
\int_{B_3^\eps}\big[\n u_\eps\cdot \n(\de_{\tilde \l_\eps} u_\eps
)\big]\phi_\eps =-\frac{1}{2}\int_{B_3^\eps\setminus B_2^\eps}|\n
u_\eps|^2 \de_{\tilde \l_\eps} \phi_\eps =O(\eps).
\end{equation}
Easily we have
\begin{equation}\label{eq:lnchi}
\int_{B_3^\eps\setminus B_2^\eps}(\n u_\eps\cdot \n \phi_\eps
)(\de_{\tilde \l_\eps} u_\eps )=O(\eps).
\end{equation}
Analogously, we have
\begin{align*}
0&=\irn \de_{\tilde \l_\eps} \big[V(\eps x)u_\eps^2 \phi_\eps\big]
\\
&=\eps \irn (\de_{\tilde \l_\eps} V(\eps x))u_\eps^2 \phi_\eps +2
\irn  V(\eps x)(\de_{\tilde \l_\eps} u_\eps )u_\eps\phi_\eps +\irn
V(\eps x) u_\eps^2 (\de_{\tilde \l_\eps} \phi_\eps ),
\end{align*}
and so
\begin{equation}\label{eq:lv}
\int_{B_3^\eps}  V(\eps x)u_\eps(\de_{\tilde \l_\eps} u_\eps
)\phi_\eps =-\frac{\eps}2 \int_{B_3^\eps} (\de_{\tilde \l_\eps}
V(\eps x))u_\eps^2 \phi_\eps -\frac 12\int_{B_3^\eps\setminus
B_2^\eps} V(\eps x) u_\eps^2 (\de_{\tilde \l_\eps} \phi_\eps )
=O(\eps).
\end{equation}
Moreover, since by the definition of $G_\eps$,
\[
\de_{\tilde \l_\eps}  G_\eps( x,u_\eps) = \eps \de_{\tilde
\l_\eps}  \chi(\eps x)(F(u_\eps)-\tilde F(u_\eps)) +g_\eps(
x,u_\eps)\de_{\tilde \l_\eps}  u_\eps,
\]
we have
\begin{align*}
0&=\irn \de_{\tilde \l_\eps} \big[G_\eps( x,u_\eps) \phi_\eps\big]
\\
&=\eps\irn (F(u_\eps)-\tilde F(u_\eps))(\de_{\tilde \l_\eps} \chi
(\eps x)) \phi_\eps +\irn g_\eps( x,u_\eps)(\de_{\tilde \l_\eps}
u_\eps ) \phi_\eps +\irn G_\eps (x,u_\eps) (\de_{\tilde \l_\eps}
\phi_\eps )
\end{align*}
and so
\begin{equation}\label{eq:lg}
\int_{B_3^\eps}  g_\eps( x,u_\eps)(\de_{\tilde \l_\eps} u_\eps )
\phi_\eps =-\eps\int_{B_3^\eps}  (F(u_\eps)-\tilde
F(u_\eps))(\de_{\tilde \l_\eps} \chi (\eps x)) \phi_\eps
-\int_{B_3^\eps\setminus B_2^\eps}  G_\eps (x,u_\eps) (\de_{\tilde
\l_\eps} \phi_\eps ) =O(\eps).
\end{equation}
Finally
\begin{align*}
0&=\int_{B_3^\eps}\de_{\tilde \l_\eps} \big[(\l_{\eps} \cdot
h_\eps(x)) u^2_\eps \phi_\eps\big]
\\
&= |\l_\eps|\int_{B_3^\eps}u^2_\eps \phi_\eps +2\int_{B_3^\eps}
(\l_{\eps} \cdot h_\eps(x)) u_\eps(\de_{\tilde \l_\eps} u_\eps
)\phi_\eps +\int_{B_3^\eps\setminus B_2^\eps} (\l_{\eps} \cdot
h_\eps(x)) u_\eps^2(\de_{\tilde \l_\eps} \phi_\eps )
\end{align*}
and so
\begin{equation}\label{eq:lh}
\int_{B_3^\eps}  (\l_{\eps} \cdot h_\eps(x)) u_\eps(\de_{\tilde
\l_\eps} u_\eps )\phi_\eps =-\frac{1}2|\l_{\eps}|\int_{B_3^\eps}
u^2_\eps \phi_\eps +O(\eps).
\end{equation}
By (\ref{eq:**})--(\ref{eq:lh}) and by Lemma \ref{le:L2uL2}, we
conclude.
\end{proof}

Therefore, we can suppose that there exists $\bar \l\in E$ such
that
\[
\bar \l=\lim_{\eps\to 0} \frac{\l_\eps}{\eps}.
\]

\begin{proof}[Proof of Proposition \ref{pr:b>=m}]
We will consider separately the case $\bar \l=0$ and $\bar \l\neq
0$.
\\
\
\\
{\bf Case 1)}: $\bar \l=0$.

We consider a sequence $\e_k\to 0$, that we still denote by $\e$.
\\
By \cite[Proposition 4.2]{jt-cv}, there exists $n\in \N$, $\bar
c>0$ and, for all $i=1,\ldots,n$,  there exist $y_\eps^i\in
B_2^\eps $, $\bar y_i \in B_2$ and $u_i\in \H \setminus \{0\}$
such that
\begin{align*}
&\eps y_\eps^i\to \bar y_i, 
\\
&|y_\eps^i -y_\eps^j|\to \infty, \ \hbox{ if }i\neq j ,
\\
&u_\eps (\cdot +y_\eps^i )\rightharpoonup u_i,\ \hbox{ weakly in }\H,  
\\
&\|u_i\|\ge \bar c,  
\\
& u_\eps -\sum_{i=1}^n u_i (\cdot -y_\eps^i ) \to 0 ,\ \hbox{
strongly in }\H ,
\end{align*}
and $u_i$ is a positive solution of
\[
-\Delta u_i+V(\bar y_i) u_i=g(\bar y_i,u_i).
\]
Moreover
\[
\lim_{\eps\to 0} b_\eps=\lim_{\eps\to 0} \tilde I_\eps (u_\eps) =
\sum_{i=1}^n J_{\bar y_i}(u_i).
\]
Since, by \ref{en:g1}, we have that $ J_{\bar y_i}(u_i)\ge
\Phi_{V(\bar y_i)}(u_i)$, for all $i=1,\ldots,n$, to conclude the
proof, we have only to show that
\begin{equation*}
\sum_{i=1}^n \Phi_{V(\bar y_i)}(u_i)\ge m.
\end{equation*}
This is trivially true if $n\ge 2$ by Lemma \ref{le:cm}, since
$\bar y_i\in B_2$. If, otherwise, $n=1$, since
$\b_\eps(u_\eps)=0$,
\[
0=\e \int_{B_3^\e-y^1_\e} \pi_E(x+y^1_\eps ) u_\e^2(x+y^1_\e) =
\int_{B_3^\e-y^1_\e} \pi_E(\e x+\e y^1_\e) u_\e^2(x+y^1_\e) \to
\pi_E(\bar{y}_1) \irn u_1^2.
\]
Therefore, $\bar y_1\in E^\perp$ and then Lemma \ref{le:cm}
implies:
\[
\Phi_{V(\bar y_1)}(u_1)\ge  m_{V(\bar y_1)}\ge m.
\]

\
\\
{\bf Case 2)}: $\bar \l\neq 0$.
\\
In this case we cannot conclude simply as in the previous one
because of the interference of the Lagrange multiplier. Some
technical work is needed here.

Let
\[
H_\eps=\left\{x\in \RN \mid \bar \l \cdot x \le
\frac{\a_1}{2\eps}\right\}.
\]
\begin{lemma}\label{le:L2LpH}
We have that $u_\eps \chi_{H_\eps} \nrightarrow 0$ in the
$L^2$-norm and in the $L^{p+1}$-norm.
\end{lemma}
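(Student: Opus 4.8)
I would argue by contradiction, reducing the statement to Lemma~\ref{le:L2uL2} and exploiting that the single scalar relation hidden in the constraint $\beta_\eps(u_\eps)=0$ controls the $L^2$-mass of $u_\eps$ in the ``far away'' half-space $H_\eps^c$. Suppose first that, along a subsequence, $u_\eps\chi_{H_\eps}\to 0$ in $L^2(\RN)$. Since $\bar\lambda\in E$, the identity $\beta_\eps(u_\eps)=0$, i.e. $\int_{B_3^\eps}\pi_E(x)\,u_\eps^2\,dx=0$, gives after taking the scalar product with $\bar\lambda$ that $\int_{B_3^\eps}(\bar\lambda\cdot x)\,u_\eps^2\,dx=0$. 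Split $B_3^\eps$ into $B_3^\eps\cap H_\eps$ and $B_3^\eps\cap H_\eps^c$: on the first set $|\bar\lambda\cdot x|\le|\bar\lambda|\,|x|<|\bar\lambda|R_3/\eps$ and $u_\eps$ is small in $L^2$ by assumption, while on the second $\bar\lambda\cdot x>\alpha_1/(2\eps)$, so
\begin{align*}
\frac{\alpha_1}{2\eps}\int_{B_3^\eps\cap H_\eps^c}u_\eps^2\,dx
&\le\int_{B_3^\eps\cap H_\eps^c}(\bar\lambda\cdot x)\,u_\eps^2\,dx
=-\int_{B_3^\eps\cap H_\eps}(\bar\lambda\cdot x)\,u_\eps^2\,dx\\
&\le\frac{|\bar\lambda|R_3}{\eps}\,\|u_\eps\chi_{H_\eps}\|_{L^2}^2 .
\end{align*}
Hence $\int_{B_3^\eps\cap H_\eps^c}u_\eps^2\le\frac{2|\bar\lambda|R_3}{\alpha_1}\|u_\eps\chi_{H_\eps}\|_{L^2}^2=o_\eps(1)$, so that $\int_{B_3^\eps}u_\eps^2\le\|u_\eps\chi_{H_\eps}\|_{L^2}^2+o_\eps(1)=o_\eps(1)$; since $B_2^\eps\subset B_3^\eps$, this forces $u_\eps\chi_{B_2^\eps}\to 0$ in $L^2(\RN)$, contradicting Lemma~\ref{le:L2uL2}. (Note that this part uses no information on $\lambda_\eps$ beyond $\bar\lambda\ne 0$.)

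For the $L^{p+1}$ assertion I would run the same reduction. If $u_\eps\chi_{H_\eps}\to 0$ in $L^{p+1}$, testing \eqref{eq:lambda} against $u_\eps$ removes the Lagrange term $\lambda_\eps\cdot\int h_\eps(x)u_\eps^2$ (precisely because $\beta_\eps(u_\eps)=0$), so, arguing exactly as in the proof of Lemma~\ref{le:L2uL2} (via \ref{en:fg} and the Sobolev embedding), one obtains a uniform lower bound $\|u_\eps\|_{L^{p+1}(B_2^\eps)}\ge c>0$; since the $L^{p+1}$-mass on $B_2^\eps\cap H_\eps$ is negligible, $u_\eps$ must keep non-negligible $L^{p+1}$-mass inside $B_2^\eps\cap H_\eps^c$, and this has to be excluded along the same lines.

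The delicate point is precisely the region $B_3^\eps\cap H_\eps^c$: there $|x|$ is of order $1/\eps$ and, since $\lambda_\eps\sim\eps\bar\lambda$, the effective potential $V(\eps x)-\lambda_\eps\cdot h_\eps(x)$ need not be coercive, so no pointwise decay of $u_\eps$ on that set is available a priori. The whole proof rests on the fact that the sign of $\bar\lambda\cdot x$ on $H_\eps^c$, together with the single linear relation $\bar\lambda\cdot\int_{B_3^\eps}x\,u_\eps^2=0$ coming from the barycentre constraint, already forces the mass of $u_\eps$ there to be dominated by its (assumed small) mass on $H_\eps$. This is handed to us immediately for the $L^2$-norm; for the $L^{p+1}$-norm one has to combine the same estimate with subcritical elliptic estimates for $u_\eps$, and I expect this conversion to be the only genuinely technical step.
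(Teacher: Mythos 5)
Your $L^2$ part is correct and is essentially the argument of the paper: you dot the constraint $\int_{B_3^\eps}\pi_E(x)u_\eps^2=0$ with $\bar\l$, use that $\bar\l\cdot x\ge \alpha_1/(2\eps)$ outside $H_\eps$ and $|\bar\l\cdot x|\le|\bar\l|R_3/\eps$ inside $B_3^\eps$, and contradict Lemma \ref{le:L2uL2}. The only difference is that the paper runs this on the slightly smaller half-space $H'_\eps=\{\bar\l\cdot x\le \alpha_1/(3\eps)\}$, obtaining the stronger statement $\int_{H'_\eps}u_\eps^2\nrightarrow 0$; this is not cosmetic, as it is exactly what the second half of the proof needs.

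For the $L^{p+1}$ assertion you do not give a proof: the step ``this has to be excluded along the same lines'' is precisely the missing content, and the route you sketch runs into a genuine obstruction. Assuming $\|u_\eps\|_{L^{p+1}(H_\eps)}\to 0$, the barycentre identity only yields $\int_{B_3^\eps\cap H_\eps^c}u_\eps^2\le C\int_{B_3^\eps\cap H_\eps}u_\eps^2$, and by the first half of the lemma the right-hand side is \emph{not} small, so no smallness on $H_\eps^c$ can be extracted; moreover $L^{p+1}$-smallness on $H_\eps$ does not imply $L^2$-smallness there (interpolation $\|u\|_{L^{p+1}}\le\|u\|_{L^2}^{\a}\|u\|_{L^{s}}^{1-\a}$ goes the wrong way), so the $L^2$ trick cannot be repeated, and the ``subcritical elliptic estimates'' you invoke would only convert bounded $L^2$-mass on $H_\eps^c$ into bounded, not small, $L^{p+1}$-mass. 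The paper avoids a second contradiction argument altogether: it tests \eqref{eq:lambda} with $u_\eps\phi_\eps$, where $\phi_\eps$ is a cut-off equal to $1$ on $H'_\eps$, supported in $H_\eps$, with $|\n\phi_\eps|\le C\eps$. On $H_\eps\cap B_3^\eps$ one has $\l_\eps\cdot h_\eps(x)\le \alpha_1/2+o(1)$ (here Lemma \ref{le:Oeps} and the defining inequality of $H_\eps$ enter), so the Lagrange term is absorbed by $V(\eps x)\ge\alpha_1$, and \ref{en:fg} gives
$\int_{H'_\eps}|\n u_\eps|^2+\int_{H'_\eps}\bigl(V(\eps x)-\tfrac{\alpha_1}{2}-\d\bigr)u_\eps^2\le O(\eps)+C_\d\int_{H_\eps}u_\eps^{p+1}$;
the left-hand side is bounded below by $c\int_{H'_\eps}u_\eps^2$, which is bounded away from zero by the ($H'_\eps$-version of the) first part, whence $\int_{H_\eps}u_\eps^{p+1}\nrightarrow 0$. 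This cut-off mechanism, which works entirely inside the region where the effective potential $V(\eps x)-\l_\eps\cdot h_\eps(x)$ is coercive, is the ingredient your proposal lacks, and it is also the reason the paper proves the $L^2$ statement on $H'_\eps$ rather than on $H_\eps$.
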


\begin{proof}
Let $H'_\eps=\left\{x\in \RN \mid \bar \l  \cdot x\le
\frac{\a_1}{3 \eps}\right\}\subset H_\eps$.  We will prove that
\begin{equation}\label{eq:L2H1}
\int_{H'_\eps} u_\eps^2 \nrightarrow 0, \ \hbox{ as }\eps\to 0.
\end{equation}
Suppose by contradiction that
\begin{equation}\label{eq:absH}
\int_{H'_\eps} u_\eps^2 \to 0, \ \hbox{ as }\eps\to 0.
\end{equation}
Since $\b_\eps(u_\eps)=0$ and $\bar \l\in E$, we have
\[
0= \irn \bar{\l} \cdot h_\eps(x) u_\eps^2 =\int_{(H'_\eps)^c \cap
B_3^\e}\!\!\! \!\!\! \bar{\l} \cdot x \ u_\eps^2 +\int_{H'_\eps
\cap B_3^\e} \!\!\! \!\!\bar{\l} \cdot x \  u_\eps^2 \ge
\frac{\a_1}{3 \eps}\int_{(H'_\eps)^c \cap B_3^\e} \!\!\!\!\!\!
u_\eps^2 +\int_{H'_\eps \cap B_3^\e} \!\!\!\!\! \bar{\l} \cdot x \
u_\eps^2
\]
Therefore
\[
\frac{\a_1}{3 \eps} \int_{(H'_\eps)^c \cap B_3^\e}u_\eps^2 \le
\left|\int_{H'_\eps \cap B_3^\e} \bar{\l} \cdot x \
u_\eps^2\right| \le \frac{|\bar \l|R_3}{\eps}\int_{H'_\eps \cap
B_3^\e}u_\eps^2
\]
and so
\[
\int_{(H'_\eps)^c \cap B_3^\e}u_\eps^2 \to 0, \ \hbox{ as }\eps\to
0.
\]
This last formula, together with (\ref{eq:absH}), implies that
$u_\eps \chi_{B_3^\e}\to 0$ in $L^2(\RN)$ but we get a
contradiction with Lemma \ref{le:L2uL2} and so the first part of
the lemma is proved.
\\
Let us now consider the second part of the statement.
\\
Let $\phi_\eps :\RN\to \R$ be a smooth function such that
\[
\phi_\eps(x)= \left\{
\begin{array}{ll}
1 & \hbox{in }H'_\eps,
\\
0 & \hbox{in }(H_\eps)^c,
\end{array}
\right.
\]
and with $0\le\phi_\eps\le 1$ and $|\n \phi_\eps|\le C\eps$.
Multiplying (\ref{eq:lambda}) by $u_\eps \phi_\eps$ and
integrating, we have
\[
\int_{H_\eps} |\n u_\eps|^2 \phi_\eps +\int_{H_\eps\setminus
H'_\eps} \n u_\eps \cdot \n \phi_\eps u_\eps +\int_{H_\eps} V(\eps
x)u_\eps^2 \phi_\eps -\int_{H_\eps} g_\eps(x,u_\eps)u_\eps
\phi_\eps =\int_{H_\eps} \l_\eps\cdot h_\eps(x) u_\eps^2
\phi_\eps.
\]
Therefore, by \ref{en:fg}, if $\d>0$ is sufficiently small, there
exists $C_\d>0$, such that
\[
\int_{H'_\eps} |\n u_\eps|^2 +\int_{H'_\eps} \left(V(\eps
x)-\frac{\a_1}{2}-\d \right) u_\eps^2 \le
O(\eps)+C_\d\int_{H_\eps} u_\eps^{p+1},
\]
and so the conclusion follows by (\ref{eq:L2H1}).
\end{proof}

We consider a sequence $\e_k\to 0$, that we still denote by $\e$.

\begin{proposition}\label{pr:uuu} There exist $n\in \N$, $\bar c>0$ and, for all $i=1,\ldots,n$,
there exist $y_\eps^i\in B_2^\eps \cap H_\eps$, $\bar y_i \in B_2$
and $u_i\in \H \setminus \{0\}$ such that
\begin{align*}
&\eps y_\eps^i\to \bar y_i, 
\\
&|y_\eps^i -y_\eps^j|\to \infty, \ \hbox{ if }i\neq j ,
\\
&u_\eps (\cdot +y_\eps^i )\rightharpoonup u_i,\ \hbox{ weakly in }\H,  
\\
&\|u_i\|\ge \bar c,  
\\
&\| u_\eps -\sum_{i=1}^n u_i (\cdot -y_\eps^i )\|_{H^1(H_\e)} \to
0,
\end{align*}
and $u_i$ is a positive solution of
\[
-\Delta u_i+V(\bar y_i) u_i=g(\bar y_i,u_i)+\bar \l\cdot \bar y_i
u_i.
\]
\end{proposition}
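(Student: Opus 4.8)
The plan is to adapt the concentration-compactness / profile-decomposition argument of \cite[Proposition 4.2]{jt-cv} (which is exactly what was invoked in Case 1) to the present situation, where the extra Lagrange-multiplier term $\l_\eps\cdot h_\eps(x)u_\eps$ is present but is $O(\eps)$ by Lemma \ref{le:Oeps}. The point of restricting attention to the half-space $H_\eps$ is that on $H_\eps$ one has $\bar\l\cdot(\eps x)\le\a_1/2$, so that when we translate by a point $y_\eps^i\in H_\eps$ and pass to the limit, the rescaled potential term $\l_\eps\cdot h_\eps(x)u_\eps=\eps\,\bar\l\cdot x\,u_\eps+o(\eps)$ converges (after the translation $x\mapsto x+y_\eps^i$, multiplying $\eps$ by the bounded-in-$\eps x$ region) to the \emph{constant} coefficient $\bar\l\cdot\bar y_i$ times $u_i$; this is what produces the limiting equation $-\Delta u_i+V(\bar y_i)u_i=g(\bar y_i,u_i)+\bar\l\cdot\bar y_i\,u_i$.

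Concretely, first I would observe that by Lemma \ref{le:L4c} the mass of $u_\eps$ outside $B_4^\eps$ is negligible, and by Lemma \ref{le:L2LpH} the mass of $u_\eps$ inside $H_\eps$ is bounded below away from $0$ in $L^{p+1}$; hence there is a sequence of points $y_\eps^1\in B_4^\eps\cap H_\eps$ along which $u_\eps(\cdot+y_\eps^1)\rightharpoonup u_1\neq 0$ (a standard Lions-type argument, using the bound \ref{en:fg} and the uniform boundedness of $\{u_\eps\}$ from Lemma \ref{le:lambda}). One can moreover force $y_\eps^1\in B_2^\eps$: outside $B_2^\eps$ the nonlinearity is the subcritical truncation $\tilde f$ with $\tilde f(s)\le as$ and $a$ as in \eqref{eq:a}, so no nontrivial bubble can form there (the argument is the one already used in the proof of Proposition \ref{pr:PS} and Lemma \ref{le:L2uL2}). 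Passing to the limit in \eqref{eq:lambda} translated by $y_\eps^1$, using that $\eps y_\eps^1\to\bar y_1\in B_2$, that $\chi_\eps(\cdot+y_\eps^1)\to\chi_{\{\text{const}\}}$ in the appropriate pointwise sense so $g_\eps(x+y_\eps^1,\cdot)\to g(\bar y_1,\cdot)$, and that $\l_\eps\cdot h_\eps(x+y_\eps^1)=\eps\bar\l\cdot(x+y_\eps^1)\chi_{B_3^\eps}(x+y_\eps^1)+o(\eps)\to\bar\l\cdot\bar y_1$ locally uniformly, yields that $u_1>0$ solves $-\Delta u_1+V(\bar y_1)u_1=g(\bar y_1,u_1)+\bar\l\cdot\bar y_1\,u_1$. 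One then sets $v_\eps^{(1)}=u_\eps-u_1(\cdot-y_\eps^1)$ and iterates: if $v_\eps^{(1)}\chi_{H_\eps}\to 0$ in $L^{p+1}$ we stop; otherwise we extract a new translate $y_\eps^2$, necessarily with $|y_\eps^1-y_\eps^2|\to\infty$ by weak convergence. The uniform lower bound $\|u_i\|\ge\bar c$ (obtained, as in the proof of Lemma \ref{le:cm}, from \ref{en:fg} applied to the limiting equation) together with the $\H$-boundedness of $\{u_\eps\}$ forces the iteration to terminate after finitely many steps $n$, and the Brezis--Lieb lemma gives $\|u_\eps-\sum_{i=1}^n u_i(\cdot-y_\eps^i)\|_{H^1(H_\eps)}\to 0$.

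The main obstacle, and the reason this does not follow verbatim from \cite{jt-cv}, is controlling the Lagrange-multiplier term uniformly through the translations and the iteration. Away from $H_\eps$ the coefficient $\bar\l\cdot(\eps x)$ is not bounded, so the restriction to $H_\eps$ in the conclusion (the splitting is only asserted in $H^1(H_\eps)$, and the centers $y_\eps^i$ lie in $H_\eps$) is essential; one has to check that no mass of $u_\eps\chi_{H_\eps}$ escapes into $H_\eps^c$ during the iteration — this is where Lemma \ref{le:L2LpH} and the half-space test-function computation it contains are used — and that each extracted center indeed stays in $B_2^\eps\cap H_\eps$ rather than drifting to the boundary $\partial H_\eps$, where the coefficient $\eps\bar\l\cdot y_\eps^i\to\a_1/2$ would still give a limit but one must ensure consistency with $y_\eps^i\in B_2^\eps$ (again handled by the truncation argument, since $B_2\subset\{V\text{-region where }f\text{ is untruncated}\}$ is irrelevant outside $B_2^\eps$). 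Once these localization points are settled, the profile decomposition and the energy splitting are routine.
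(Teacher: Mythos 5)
Your overall strategy (translate, pass to the limit in \eqref{eq:lambda} using $\l_\eps=O(\eps)$ so that the multiplier term becomes the constant $\bar\l\cdot\bar y_i$, and iterate the extraction with a uniform lower bound on $\|u_i\|$) is the same as the paper's, and your explanation of why the limit equation has the extra term $\bar\l\cdot\bar y_i\,u_i$ is correct. But there is a genuine gap at the heart of the iteration: your stopping rule is ``stop when the remainder $w_\e^j=u_\e-\sum_k u_k(\cdot-y_\e^k)$ tends to $0$ in $L^{p+1}(H_\e)$,'' and you then claim that Brezis--Lieb yields the asserted convergence $\|w_\e^n\|_{H^1(H_\e)}\to 0$. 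This does not follow: vanishing of the $L^{p+1}(H_\e)$ norm of the remainder does not by itself give vanishing of its $H^1(H_\e)$ norm, and Brezis--Lieb only splits norms/energies, it does not upgrade the convergence. The usual way to close this (test the equation against the remainder) is obstructed here precisely because the relevant region is the half-space $H_\e$: $H^1$ mass of $u_\e$ can sit near, or just beyond, the hyperplane $\partial H_\e$, where the multiplier coefficient $\bar\l\cdot\e x$ is no longer $\le\a_1/2$ and the limit problems are no longer under control. This is exactly the case the paper isolates as assertion \eqref{no} ($\|w_\e^j\|_{H^1(H_\e)}\nrightarrow 0$ while $\|w_\e^j\|_{L^{p+1}(H_\e)}\to 0$) and rules out by a separate argument: a cut-off computation shows the residual $L^{p+1}$ mass must reappear in a slightly enlarged half-space $H^1_\e$ (threshold $a_1\in(\a_1/2,\tfrac{2\a_1}{3})$), the extraction is continued there, and since all thresholds stay below $\tfrac{2\a_1}{3}$ the limit equations remain coercive, the bubbles keep a uniform lower bound, the process ends in finitely many steps with a decomposition in $H^1(H_\e^q)\supset H^1(H_\e)$, contradicting \eqref{no}. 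Your proposal names the difficulty (``no mass of $u_\eps\chi_{H_\eps}$ escapes into $H_\eps^c$'') but defers it to ``the truncation argument,'' which is not relevant: the truncation of $f$ controls what happens outside $B_2^\e$, not the drift of concentration across $\partial H_\e$, which is governed by the Lagrange multiplier and requires the enlarged half-space device.

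A secondary, fixable point: to place the first concentration point in $H_\e$ you cannot apply Lions' lemma to $u_\e\chi_{H_\e}$ (not an $H^1$ function) nor to $u_\e$ itself (the point could fall outside $H_\e$). The paper's device is to apply Lions to the even reflection of $u_\e|_{H_\e}$ across $\partial H_\e$, which is bounded in $\H$, non-vanishing in $L^{p+1}$ by Lemma \ref{le:L2LpH}, and by symmetry allows the concentration point to be chosen in $H_\e$ (and in $B_4^\e$ by Lemma \ref{le:L4c}). Some such argument should be made explicit in your first extraction step as well.
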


\begin{proof}
We define $\tilde{u}_\e$ the even reflection of $u_\e|_{H_\e}$
with respect to $\partial H_{\e}$. Observe that $\{\tilde{u}_\e\}$
is bounded in $\H$ and does not converge to $0$ in $L^{p+1}(\RN)$
(recall Lemma \ref{le:L2LpH}). Then, by concentration-compactness
arguments (see \cite[Lemma 1.1]{lions}), there exists $y^1_\eps
\in \R^N$ such that
\[
\int_{B(y^1_\eps,1)}\tilde u_\eps^2\ge c>0.
\]
By the even symmetry of $\tilde{u}_\e$ and by Lemma \ref{le:L4c},
we can assume that $y^1_\e \in H_\e\cap B^\e_4$. Therefore there
exists $u_1\in \H\setminus\{0\}$ such that $v_\eps^1=u_\eps
(\cdot+ y_\eps^1)\rightharpoonup u_1$, weakly in $\H$.

Observe that $v_\e^1$ solves the equation:
\begin{equation*}
-\Delta v_\eps^1+V(\eps x+\eps y_\eps^1) v_\eps^1=g(\eps x+ \eps
y_\eps^1,v_\eps^1) +\l_\eps\cdot  h_\eps(x+y_\eps^1) v_\eps^1,
\end{equation*}
and so, passing to the limit ,  $u_1$ is a weak solution of
\[
-\Delta u_1+V(\bar y_1) u_1=g(\bar y_1,u_1)+\bar \l\cdot \bar y_1
u_1,
\]
where $\bar{y}_1=\lim_{\e \to 0}\e y_\e^1 $.
\\
Since $y_\eps^1\in H_\eps$, we have that $\bar \l\cdot \bar y_1\le
\alpha_1/2$ and so  $\bar y_1\in B_2$ (otherwise $u_1$ should be
$0$) and, by \ref{en:fg}, we easily get that there exists $c>0$
such that $c \le \|u_1\|$. Moreover, observe that
$$ \|u_\e\| \ge \|u_1\|.$$
Let us define $w_\e^1=u_\eps - u_1(\cdot - y_{\e}^1)$. We consider
two possibilities: either $\| w_\e^1\|_{H^1(H_\e)} \to 0$ or not.
In the first case the proposition should be proved taking $n=1$.
In the second case, there are still two sub-cases: either $\|
w_\e^1 \|_{L^{p+1}(H_\e)} \to 0$ or not.
\\
\
\\
{\bf Step 1: } Assume that $\| w_\e^1 \|_{L^{p+1}(H_\e)}
\nrightarrow 0$.

In such case, we can repeat the previous argument to the sequence
$\{w_\e^1\}$: we take $\tilde{w}_\e^1$ its even reflection with
respect to $\partial H_\e$, and apply \cite[Lemma 1.1]{lions};
there exists $y^2_\eps \in H_\e$ such that
\[
\int_{B(y^2_\eps,1)}(\tilde w_\eps^1)^2\ge c>0.
\]
Therefore, as above, there exists $u_2\in \H\setminus\{0\}$ such
that $v_\eps^2=w_\eps^1 (\cdot+ y_\eps^2)\rightharpoonup u_2$,
weakly in $\H$. Moreover, $|y_\e^1 -y_\e^2 | \to +\infty$ and $\e
y_\e^2 \to \bar{y}_2 \in B_2$, and
\[
-\Delta u_2+V(\bar y_2) u_2=g(\bar y_2,u_2)+\bar \l\cdot \bar y_2
u_2,
\]
and $\|u_2\|\ge c>0$. Moreover, by weak convergence,
$$\|u_\e\|^2 \ge \|u_1\|^2 + \|u_2\|^2.$$
Let us define $w_\e^2:=w_\e^1 - u_2(\cdot - y_{\e}^2)=u_\e -
u_1(\cdot - y_{\e}^1) - u_2(\cdot - y_{\e}^2)$. Again, if $\|
w_\e^2\|_{H^1(H_\e)}\to 0$, the proof is completed for $n=2$.

Suppose now that $\| w_\e^2\|_{H^1(H_\e)}\nrightarrow 0$, $\|
w_\e^2\|_{L^{p+1}(H_\e)} \nrightarrow 0$. In such case we can
repeat the argument again.

Observe that we would finish in a finite number of steps,
concluding the proof.

The only possibility missing in our study is the following:
\begin{equation} \label{no} \mbox{ at a certain step } j,\ \ \| w_\e^j\|_{H^1(H_\e)}\nrightarrow 0, \mbox{ and } \|
w_\e^j\|_{L^{p+1}(H_\e)} \to 0,
\end{equation}
where $w_\e^j=u_\e - \sum_{k=1}^j u_k(\cdot - y_{\e}^k)$.
\\
\
\\
{\bf Step 2: } The assertion \eqref{no} does not hold.

Suppose by contradiction \eqref{no}. Let us define
$$
H_\e^1 = \left\{x\in \RN \mid \bar \l  \cdot x \le
\frac{a_2}{\eps}\right\},
$$
where $\frac{\a_1}{2}< a_1<\frac{2\a_1}{3}$. We claim that
\begin{equation}\label{eq:vu1p}
\| w_\e^j \|_{L^{p+1}(H_\e^1)} \nrightarrow 0.
\end{equation}
By \eqref{no} there exists $\d>0$ such that
\begin{equation}\label{eq:vud}
\| u_\eps \|^2_{H^1(H_\e)} \ge \sum_{k=1}^j  \| u_k(\cdot -
y_\e^k) \|^2_{H^1(H_\e)} +\d.
\end{equation}

Let us fix $R>0$ large enough and choose a cut-off function $\phi$
satisfying the following:
$$ \left \{ \begin{array}{ll} \phi= 0  & \mbox{ in } \left (\cup_{k=1}^j B(y_\e^k, R) \right ) \cup (H_\e^1)^c, \\
\phi =1   & \mbox{ in } H_\e \setminus \left (\cup_{k=1}^j
B(y_\e^j, 2R) \right ), \\0 \le \phi \le 1, & \\ |\nabla \phi |
\le C/R. &
\end{array} \right.$$
We multiply \eqref{eq:lambda} by $\phi u_\e$ and integrate to
obtain:
$$\irn \phi |\n u_\e|^2 + u_\e (\n u_\e \cdot \n \phi) + V(\e x)
\phi u_\e^2 = \irn g_\e(x,u_\e) \phi u_\e^2 + \bar{\l} \cdot
h_\e(x) \phi u_\e^2.$$ Therefore, by using \ref{en:fg} and the
properties of the cut-off $\phi$ we get:
\begin{equation}\label{eq:palle}
\int_{H_\e \setminus \left ( \cup_{k=1}^j B(y_\e^j, 2R)\right
)}\left (|\n u_\e|^2 + c u_\e^2 \right ) - \frac C R \le C
\int_{H_\e^1 \setminus \left ( \cup_{k=1}^j B(y_\e^j, R)\right )}
u_\e^{p+1}.
\end{equation}
Observe moreover that by regularity arguments $u_\e(\cdot +
y_\e^k) \to u_k$ in $H^1_{loc}$. Then \eqref{eq:vud} implies that
the left hand term in (\ref{eq:palle}) is bounded from below: this
finishes the proof of \eqref{eq:vu1p}.

\medskip Then, we can repeat the whole procedure: there exists
$y_\e^{j+1} \in H^1_\e$ such that $u_\e(\cdot + y_\e^{j+1})
\rightharpoonup u_{j+1}$. Define $w_\e^{j+1} = w_\e^j -
u_{j+1}(\cdot - y_\e^{j+1})$. Observe that since $\|
w_\e^j\|_{L^{p+1}(H_\e)} \to 0$, we have that $\dist(y_\e^{j+1},
H_\e) \to +\infty$.

Now we go on as above, replacing $H_\e$ with $H_\e^1$. If for
certain $j' \ge j+1$ we have:
\begin{equation*}
\| w_\e^{j'}\|_{H^1(H^1_\e)}\nrightarrow 0, \mbox{ and } \|
w_\e^{j'}\|_{L^{p+1}(H^1_\e)} \to 0,
\end{equation*}
we argue again as in the beginning of Step 2 to deduce that $\|
w_\e^{j'}\|_{L^{p+1}(H^2_\e)} \nrightarrow 0$, where
$$
H_\e^2 = \left\{x\in \RN \mid \bar \l  \cdot x \le
\frac{a_2}{\eps}\right\},
$$
with $a_1<a_2<\frac{2\a_1}{3}$.

In so doing we can again continue our argument, eventually
introducing
$$
H_\e^l = \left\{x\in \RN \mid \bar \l  \cdot x \le
\frac{a_l}{\eps}\right\},
$$
with $a_{l-1}<a_l<\frac{2\a_1}{3}$.

Since all limit solutions $u_k$ are bounded from below in norm, we
end in a finite number $n$ of steps. Therefore, we obtain
\begin{align*}
& y_\eps^k \in H_\e \ \ \forall  k=1, \dots j,
\\
&\dist (y_\eps^k, H_\e) \to \infty, \ \ \forall  k= j+1, \dots n,  ,
\\
&\| u_\eps -\sum_{k=1}^n u_k (\cdot -y_\eps^k )\|_{H^1(H^q_\e)}
\to 0, \mbox {for a suitable } q.
\end{align*}
This implies that
\[
\|w_\e^j\|_{H^1(H_\e)} \le \| u_\eps -\sum_{k=1}^n u_k (\cdot
-y_\eps^k )\|_{H^1(H_\e)} +o_\e(1)=o_\e(1)
\]
but this is in contradiction with $\|w_\e^j\|_{H^1(H_\e)}
\nrightarrow 0$ assumed in \eqref{no}.

\end{proof}

Our arguments distinguish two possible situations. Let us consider
each of them separately.
\\
\
\\
{\bf Case 2a)}: $\bar \l \cdot \bar y_i \ge 0$, for all
$i=1,\ldots,n$.
\\
Since $\b_\eps(u_\eps)=0$, we have that
\[
0=\int_{H_\eps}\l_\eps \cdot h_\eps(x) u_\eps^2
+\int_{(H_\eps)^c}\l_\eps \cdot h_\eps(x) u_\eps^2.
\]
By Proposition \ref{pr:uuu} and since  $\bar \l \cdot \bar y_i \ge
0$, for all $i=1,\ldots,n$, we know that
\[
\int_{H_\eps}\l_\eps \cdot h_\eps(x) u_\eps^2 \to \sum_{i=1}^n
\bar \l \cdot \bar y_i \irn u_i^2 \ge 0,
\]
whereas $\l_\eps \cdot h_\eps(x) \ge \frac{\alpha_1}{2 \eps}$ in
$B_3^\e \setminus H_\eps$. Therefore we have
\begin{equation*}
\bar \l \cdot \bar y_i = 0, \ \hbox{ for all }i=1,\ldots,n,
\end{equation*}
and
\begin{equation}\label{pora cocca}
\frac{\alpha_1}{2 \eps} \int_{B_3^\e \setminus H_\eps} u_\eps^2
\le \int_{(H_\eps)^c} \l_\e \cdot h_\e(x) u_\eps^2 \to 0, \ \hbox{
as }\eps \to 0.
\end{equation}
With that information in hand, let us estimate the energy
$\tilde{I}_\e(u_\e)$:
\begin{align*}
\tilde I_\eps (u_\eps) &= \int_{B_2^\eps \cap H_\e} \left[\frac 12
\big(|\n u_\eps|^2 +V(\eps x) u_\eps^2\big)
-G_\eps(x,u_\eps)\right] \nonumber
\\
&\quad +\int_{B_2^\eps \setminus H_\e }  \left[\frac 12 \big(|\n
u_\eps|^2 +V(\eps x) u_\eps^2\big) -G_\eps(x,u_\eps)\right]
\nonumber
\\
&\quad + \int_{(B_2^\eps)^c}  \left[\frac 12 \big(|\n u_\eps|^2
+V(\eps x) u_\eps^2\big) -G_\eps(x,u_\eps)\right].
\end{align*}

By Proposition \ref{pr:uuu}, we have that
\begin{equation*}
\int_{ B_2^\eps \cap H_\e}  \left[\frac 12 \big(|\n u_\eps|^2
+V(\eps x) u_\eps^2\big) -G_\eps(x,u_\eps)\right] = \sum_{i=1}^n
J_{\bar y_i}(u_i) +o_\eps(1).
\end{equation*}
Moreover,  since $\{u_\eps\}_{\eps>0}$ is a bounded sequence in
$\H$ and so also in $L^{s}(\RN)$ (for a certain $s>p+1$), we can
use interpolation and \eqref{pora cocca} to get
\begin{equation*}
\int_{B_3^\e \setminus H_\eps} u_\eps^{p+1} \to 0, \ \hbox{ as
}\eps \to 0.
\end{equation*}
Then, we obtain:
\begin{equation*}
\int_{B_2^\e \setminus H_\eps}  \left[\frac 12 \big(|\n u_\eps|^2
+V(\eps x) u_\eps^2\big) -G_\eps(x,u_\eps)\right] \ge o_\e(1).
\end{equation*}

Finally, by the definition of $G_\e(x,u)$, we have:
\begin{equation*}
\int_{(B_2^\eps)^c}  \left[\frac 12 \big(|\n u_\eps|^2 +V(\eps x)
u_\eps^2\big) -G_\eps(x,u_\eps)\right] \ge 0.
\end{equation*}

So, we get the estimate:
\begin{equation*}
\lim_{\eps\to 0} b_\eps=\lim_{\eps\to 0} \tilde I_\eps (u_\eps)
\ge \sum_{i=1}^n J_{\bar y_i}(u_i).
\end{equation*}

Reasoning as in Case 1, we easily conclude whenever $n>1$.
Moreover, if $n=1$,
$$ 0 = \e \int_{B_3^\e} \pi_E(x) u_\e^2(x)= \e  \int_{B_3^\e \cap H_\e} \pi_E(x) u_\e^2(x) + \e \int_{B_3^\e \setminus H_\e} \pi_E(x) u_\e^2(x).$$
By \eqref{pora cocca}, the second right term of the last
expression tends to $0$. By arguing as in Case 1, we conclude:
$$  \e \int_{B_3^\e \cap H_\e} \pi_E(x) u_\e^2(x) \to
\pi_E(\bar{y}_1) \irn u_1^2.$$ Then $\bar{y}_1 \in E^{\perp}$, and
we conclude
$$ J_{\bar y_1}(u_1) \ge m_{V(\bar{y}_1)} \ge m.$$
\\
\
\\
{\bf Case 2b)}: there exists at least an $i=1,\ldots,n$ such that
$\bar \l \cdot \bar y_i<0$.

Without lost of generality, we can assume that $\bar \l \cdot \bar
y_1<0$. Let $s>0$ such that $B(\bar y_1,3s) \subset B_3$, with
$\bar y_i \notin B(\bar y_1,3s)$ for all $\bar y_i\neq \bar y_1$,
and such that $\bar \l \cdot x<0$, for all $x\in B(\bar y_1,3s) $.
We define $B_s^\e=\eps^{-1}B(\bar y_1,s)$ and
$B_{2s}^\e=\eps^{-1}B(\bar y_1,2s)$. By Proposition \ref{pr:uuu},
there exists $c>0$ such that
\begin{equation}\label{eq:ber}
\int_{B_s^\e}u_\eps^2 \ge c>0.
\end{equation}
Let $\phi_\eps$ be a smooth function such that
\[
\phi_\eps(x)=\left\{
\begin{array}{ll}
1& \hbox{if } x\in B_s^\e,
\\
0& \hbox{if }x \in (B_{2s}^\e)^c,
\end{array}
\right.
\]
with $0\le \phi_\eps \le 1$ and $|\n \phi_\eps| \le C \eps$.
Repeating the arguments of the proof of Lemma \ref{le:Oeps}, we
multiply \eqref{eq:lambda} by $(\de_{\tilde \l_\eps} u_\eps
)\phi_\eps$, where $\tilde \l_\eps=\l_\eps/|\l_\eps|$. We have
\begin{align}
&\int_{B_{2s}^\e}\big[\n u_\eps\cdot \n(\de_{\tilde \l_\eps}
u_\eps )\big]\phi_\eps +\int_{B_{2s}^\e\setminus B_s^\e}(\n
u_\eps\cdot \n \phi_\eps )\de_{\tilde \l_\eps} u_\eps \nonumber \\
&\qquad +\int_{B_{2s}^\e}V(\eps x)u_\eps(\de_{\tilde \l_\eps}
u_\eps )\phi_\eps -\int_{B_{2s}^\e} g_\eps( x, u_\eps)(\de_{\tilde
\l_\eps} u_\eps )\phi_\eps \nonumber
\\
&\quad =\int_{B_{2s}^\e}(\l_{\eps} \cdot h_\eps(x))
u_\eps(\de_{\tilde \l_\eps} u_\eps )\phi_\eps . \label{eq:**bis}
\end{align}
Let us evaluate each term of the previous equality. Since
\[
\| u_\eps \|_{H^1(B_{2s}^\e\setminus B_s^\e)}\to 0,
\]
we have
\begin{align}
\label{eq:lnubis} \int_{B_{2s}^\e}\big[\n u_\eps\cdot
\n(\de_{\tilde \l_\eps} u_\eps )\big]\phi_\eps
&=-\frac{1}{2}\int_{B_{2s}^\e\setminus B_s^\e}|\n u_\eps|^2
\de_{\tilde \l_\eps} \phi_\eps =o(\eps).
\\
\label{eq:lnchibis} \int_{B_{2s}^\e\setminus B_s^\e}(\n
u_\eps\cdot \n \phi_\eps )\de_{\tilde \l_\eps} u_\eps &=o(\eps).
\end{align}
Analogously, we have
\begin{align}
\int_{B_{2s}^\e}  V(\eps x)u_\eps(\de_{\tilde \l_\eps} u_\eps
)\phi_\eps &=-\frac{\eps}2 \int_{B_{2s}^\e} (\de_{\tilde \l_\eps}
V(\eps x))u_\eps^2 \phi_\eps -\frac 12\int_{B_{2s}^\e\setminus
B_s^\e} V(\eps x) u_\eps^2 \de_{\tilde \l_\eps} \phi_\eps
\nonumber
\\
&=-\frac{\eps}2 \int_{B_{2s}^\e} (\de_{\tilde \l_\eps} V(\eps
x))u_\eps^2 \phi_\eps +o(\eps).   \label{eq:lvbis}
\end{align}
Observe that $\de_{\tilde \l_\eps} \chi (\eps x)\ge 0$ for all
$x\in B_{2s}^\e$; this is the key point of our estimates in this
case. Then, by \ref{en:ft2} we get that
\begin{align}
\int_{B_{2s}^\e} \! g_\eps( x,u_\eps)(\de_{\tilde \l_\eps} u_\eps
) \phi_\eps &=-\eps\int_{B_{2s}^\e} \!\! (F(u_\eps)-\tilde
F(u_\eps))(\de_{\tilde \l_\eps} \chi (\eps x)) \phi_\eps
-\int_{B_{2s}^\e\setminus B_s^\e}  \!\!\!G_\eps (x,u_\eps)
\de_{\tilde \l_\eps} \phi_\eps \nonumber
\\
&\le o(\eps).   \label{eq:lgbis}
\end{align}
Finally
\begin{equation}
\int_{B_{2s}^\e}  (\l_{\eps} \cdot h_\eps(x)) u_\eps(\de_{\tilde
\l_\eps} u_\eps ) \phi_\eps =
-\frac{1}2|\l_{\eps}|\int_{B_{2s}^\e} u^2_\eps \phi_\eps +o(\eps).
\label{eq:lhbis}
\end{equation}
Therefore,  by (\ref{eq:ber})--(\ref{eq:lhbis}), we obtain the
inequality:
\begin{align*}
c(|\bar \l| +o_\eps(1)) &\le \frac{|\l_{\eps}|}\eps
\int_{B_{2s}^\e} u^2_\eps \phi_\eps \le  \int_{B_{2s}^\e}
(\de_{\tilde \l_\eps} V(\eps x))u_\eps^2 \phi_\eps +o_\eps(1)
\\
&\le C \max_{x\in B_3} |\n V(x)| +o_\eps(1).
\end{align*}
We can choose $B_3$ sufficiently small such that, for a suitable
$\bar \d>0$, we have that $|\bar \l|<\bar \d$ and
\[
B_3^\eps \subset H_\eps.
\]


Now we can estimate $\tilde{I}_\e(u_\e)$ in the following way:

\begin{align}
\tilde I_\eps (u_\eps) &= \int_{B_2^\eps } \left[\frac 12 \big(|\n
u_\eps|^2 +V(\eps x) u_\eps^2\big) -G_\eps(x,u_\eps)\right]
\nonumber
\\
&\quad +\int_{(B_2^\eps)^c}  \left[\frac 12 \big(|\n u_\eps|^2
+V(\eps x) u_\eps^2\big) -G_\eps(x,u_\eps)\right]. \nonumber
\label{eq:sommadiI-bis}
\end{align}

Since $B_3^\e \subset H_\e$, we can apply Proposition \ref{pr:uuu}
to obtain:
\begin{equation*}
\int_{B_2^\eps}  \left[\frac 12 \big(|\n u_\eps|^2 +V(\eps x)
u_\eps^2\big) -G_\eps(x,u_\eps)\right] = \sum_{i=1}^n J_{\bar
y_i}(u_i) +o_\eps(1).
\end{equation*}

Moreover, by the definition of $G_\e(x,u)$, we have:
\[ \int_{(B_2^\eps)^c}  \left[\frac 12 \big(|\n u_\eps|^2
+V(\eps x) u_\eps^2\big) -G_\eps(x,u_\eps)\right]\ge 0.\]

Then, we conclude that:
$$ \tilde I_\eps (u_\eps) \ge \sum_{i=1}^n J_{\bar
y_i}(u_i) +o_\eps(1).$$

As in Case 1, we conclude easily if $n>1$. Assume now that $n=1$;
since $B_3^\e \subset H_\e$, we can argue as in Case 1 to obtain:
$$ 0 = \int_{B_3^\e} \pi_E(x) u_\e^2(x) \to \pi_E(\bar{y}_1) \irn u_1^2.$$
But this is in contradiction with the hypothesis of Case 2b),
namely, $\bar{\l} \cdot \bar{y}_1<0$.

\end{proof}

\section{Asymptotic behavior} \label{Asymptotic behavior}

In this section we will study the asymptotic behavior of the
solution obtained in Section \ref{The min-max argument}. As a
consequence, $u_{\e}$ will be actually a solution of
\eqref{eq:11}: in this way we conclude the proof of Theorem
\ref{teo}.

Let us define $u_{\e}$ the critical point of $\tilde{I}_\e$ at
level $m_{\e}$, that is,
\begin{equation} \label{eq:nose}
-\Delta u_{\e} + V(\e x) u_\e ={g}_{\e}(x,u_\e).
\end{equation}
Moreover, Propositions \ref{estima} and \ref{fund} imply that
$\tilde{I}_{\e}(u_{\e}) \to m$.

The following result gives a description of the behavior of
$u_{\e}$ as $\e \to 0$:

\begin{proposition} \label{hola} Given a sequence $\e=\e_{j} \to 0$, there exists a
subsequence (still denoted by $\e_j$) and a sequence of points
$y_{\e_j} \in \R^N$ such that:

\begin{itemize}
\item $\e_j y_{\e_j} \to 0$. \item $\| u_{\e_j} - U(\cdot -
y_{\e_j})\| \to 0$,
\end{itemize}
where $U \in \mathcal{S}$ (see (\ref{eq:S})).
\end{proposition}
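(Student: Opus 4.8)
The plan is to split $u_\e$ into profiles concentrating around points of $B_2$, to show that a single profile survives, and to identify its limiting position with the critical point $0$ and its limiting shape with a ground state of $-\Delta u+u=f(u)$. Since $u_\e$ solves \eqref{eq:nose} and $\tilde I_\e(u_\e)=m_\e\to m$ by Propositions \ref{estima} and \ref{fund}, the computation in Proposition \ref{pr:PS} shows that $\{u_\e\}$ is bounded in $\H$. I would then run the global-compactness decomposition \cite[Proposition 4.2]{jt-cv}, exactly as in Case~1 of the proof of Proposition \ref{pr:b>=m} but with no Lagrange multiplier present, to obtain $n\in\N$, points $y_\e^i\in\RN$, limits $\bar y_i=\lim_\e\e y_\e^i$ and functions $u_i\in\H\setminus\{0\}$ with $|y_\e^i-y_\e^j|\to\infty$ for $i\ne j$, $u_\e(\cdot+y_\e^i)\rightharpoonup u_i$, and
\[
u_\e-\sum_{i=1}^n u_i(\cdot-y_\e^i)\to0\ \hbox{ in }\H,\qquad m=\lim_\e\tilde I_\e(u_\e)=\sum_{i=1}^n J_{\bar y_i}(u_i),
\]
each $u_i>0$ solving $-\Delta u_i+V(\bar y_i)u_i=g(\bar y_i,u_i)$. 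Testing this equation with $u_i$ and using \ref{en:ft2} and \eqref{eq:a} as in Proposition \ref{pr:PS} rules out $|\bar y_i|\ge R_2$, so $\bar y_i\in B_2$, and in particular $\chi(\bar y_i)>0$, for every $i$.

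Next I would prove $n=1$. For each $i$ the functional $J_{\bar y_i}$ has mountain-pass geometry, since $g(\bar y_i,\cdot)\ge\chi(\bar y_i)f$ is superlinear; by \cite{jt-pams} its least-energy level equals its mountain-pass level, and the latter is $\ge m_{V(\bar y_i)}$ because $G(\bar y_i,\cdot)\le F$ by \ref{en:g1}. As $u_i$ is a nontrivial solution, $J_{\bar y_i}(u_i)\ge m_{V(\bar y_i)}$. Having chosen $R_2$ so small at the outset that $m_{V(x)}>m/2$ on $\overline{B_2}$ — possible since $V(0)=1$ and $k\mapsto m_k$ is continuous by Lemma \ref{le:cm} — one gets
\[
m=\sum_{i=1}^n J_{\bar y_i}(u_i)\ \ge\ \sum_{i=1}^n m_{V(\bar y_i)}\ >\ n\,\frac m2,
\]
which forces $n=1$. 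Writing $y_\e:=y_\e^1$, $\bar y:=\bar y_1$ and $U:=u_1$, we thus have $u_\e-U(\cdot-y_\e)\to0$ in $\H$, $\e y_\e\to\bar y\in B_2$, and $m=J_{\bar y}(U)\ge m_{V(\bar y)}$, hence $V(\bar y)\le1$ by Lemma \ref{le:cm}.

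It remains to see that $\bar y=0$ and $U\in\mathcal S$. First I would argue $\bar y\in B_1$: the identity $m=J_{\bar y}(U)$ together with $V(\bar y)\le1$ and the fact that on $B_2\setminus B_1$ the glued nonlinearity $g(\bar y,\cdot)$ lies strictly below $f$ (so $G(\bar y,\cdot)<F$ where $U$ is not small) should be incompatible with $\bar y\in B_2\setminus\overline{B_1}$, while $\bar y\in\de B_1$ is excluded by \eqref{elenuzza}. Once $\bar y\in B_1$, we have $g(\bar y,\cdot)=f$ near the range of $U$ by \ref{en:g2}, so $U>0$ solves $-\Delta U+V(\bar y)U=f(U)$. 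By elliptic regularity $u_\e\in H^2(\RN)$, so multiplying \eqref{eq:nose} by $\phi_\e\,\de_{x_j}u_\e$ with a cut-off $\phi_\e$ equal to $1$ on a large ball around $y_\e$ and supported in $B_1^\e$ (as in Lemma \ref{le:Oeps}), integrating by parts, dividing by $\e$ and letting $\e\to0$ — using $u_\e\sim U(\cdot-y_\e)$ and the uniform exponential decay — gives $\de_{x_j}V(\bar y)\irn U^2=0$, that is $\n V(\bar y)=0$. Since $0$ is an isolated critical point of $V$ and $B_2$ is so small that it is the only one in $\overline{B_2}$, we conclude $\bar y=0$; then $V(\bar y)=1$ and $U$ is a positive solution of $-\Delta U+U=f(U)$ with $\Phi(U)=J_0(U)=m$, i.e.\ $U\in\mathcal S$. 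Together with $\e y_\e\to0$ and $\|u_\e-U(\cdot-y_\e)\|\to0$ this is the assertion, with $y_{\e_j}=y_\e$.

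The main obstacle is this last step: ruling out concentration in the transition annulus $B_2\setminus B_1$ and at $\de B_1$, and establishing the translation/Pohozaev identity with an honest control of the truncation contributions — this is exactly where the smallness of the radii $R_i$, the transversality condition \eqref{elenuzza}, and the geometric hypotheses \ref{V1}--\ref{V3} on the critical point $0$ are used. The profile decomposition and the energy bound $J_{\bar y_i}(u_i)\ge m_{V(\bar y_i)}$, by contrast, essentially recycle arguments already carried out in Section \ref{The min-max argument}.
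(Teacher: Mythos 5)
Your overall scheme — boundedness, the decomposition of \cite[Proposition 4.2]{jt-cv}, the reduction to $n=1$ via $J_{\bar y_i}(u_i)\ge m_{V(\bar y_i)}$ and Lemma \ref{le:cm} after shrinking $B_2$, and the treatment of the case $\bar y\in B_1$ by multiplying \eqref{eq:nose} by $\phi_\e\,\de_\nu u_\e$ and passing to the limit to get $\n V(\bar y)=0$ — is the same as the paper's. The genuine gap is precisely the point you flag at the end, namely excluding $\bar y\in B_2\setminus\overline{B_1}$ and $\bar y\in\de B_1$, and the mechanisms you propose for these cases do not work as stated. For the annulus, an energy comparison cannot succeed: under \ref{V1} (and along $E$ under \ref{V2}--\ref{V3}) one has $V(\bar y)<1$ there, hence $m_{V(\bar y)}<m$, and the chain $m=J_{\bar y}(U)>\Phi_{V(\bar y)}(U)$ yields no contradiction, since $\Phi_{V(\bar y)}(U)$ can only be bounded below by $m_{V(\bar y)}$ (and even that requires the least-energy/mountain-pass characterization of $J_{\bar y}$, as $U$ is not a critical point of $\Phi_{V(\bar y)}$); nothing a priori forbids a critical point of $J_{\bar y}$ at level exactly $m>m_{V(\bar y)}$. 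The paper's Case 2 excludes the annulus with the same localized translation identity \eqref{esteb0} that you use inside $B_1$, but exploited quantitatively through the truncation: taking $\nu$ radial one has $\de_\nu\chi(\bar y)=-1/(R_2-R_1)$, the Pohozaev identity for $U$ together with \eqref{eq:a} gives $c\irn U^2\le\irn\bigl(F(U)-\tilde{F}(U)\bigr)$, and choosing $R_2-R_1$ small makes this term impossible to balance against the bounded term $\tfrac12\,\de_\nu V(\bar y)\irn U^2$ in \eqref{esteb}.

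Likewise, $\bar y\in\de B_1$ is not ``excluded by \eqref{elenuzza}'' for free: \eqref{elenuzza} only speaks about points of $\de B_1$ where $V=1$, so one must first show $V(\bar y)=1$ (the paper deduces this from $\chi(\bar y)=1$, so that $U$ solves the untruncated equation and $\Phi_{V(\bar y)}(U)=m$, combined with Lemma \ref{le:cm}), and then run the translation identity in the tangential direction $\tau$, with a separate argument showing that the contribution of the merely Lipschitz $\chi$ vanishes in the limit before dividing by $\e$ (the paper's Case 3). Without these two case analyses the identification $\bar y=0$ — which is the heart of the proposition and the only place where \eqref{elenuzza} and the choice of the radii enter — is not established, so the proposal as written is incomplete exactly where the real work lies.
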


\begin{proof}
For the sake of clarity, let us write $\e=\e_j$. Our first tool is
again Proposition 4.2 of \cite{jt-cv}; there exist $l \in \N$,
sequences $\{y_{\e}^k\} \subset \R^N$, $\bar y_k \in B_2$, $U_k
\in H^1(\R^N)\setminus\{0\}$ ($k=1, \dots l$) such that:

\begin{itemize}

\item $|y_{\e}^k - y_{\e}^{k'}| \to + \infty$ if $k \neq k'$,

\item $\e y_{\e}^k \to \bar y_k$,

\item $\displaystyle \left \| u_{\e} - \sum_{k=1}^l U_k(\cdot -
y_{\e}^k) \right \| \to 0,$

\item $J_{\bar y_k}'(U_k)=0$,

\item $\tilde{I}_{\e}(u_{\e}) \to \sum_{k=1}^l J_{\bar y_k}(U_k)$.

\end{itemize}
For the definition of $J_{\bar y_k}$ see \eqref{def:J}. Observe
that $J_{\bar y_k}(U_k) \ge m_{V(\bar y_k)}$ since $J_{\bar y_k}
\ge \Phi_{V(\bar y_k)}$. Moreover, Lemma \ref{le:cm} implies that
$ m_{V(\bar y_k)} \ge m - \delta$ for any $\bar y_k \in B_2$,
where $\delta>0$ can be taken arbitrary small by appropriately
shrinking $B_2$: this implies that $l=1$. So, the only thing that
remains to be proved is that $\bar y_1=0$.

Our argument here has been used already in the previous section,
so we will be sketchy. By regularity arguments, $\{u_\e\} \subset
H^2(\R^N)$ and is bounded. Choose $r>0$ and $\phi_{\e}$ a cut-off
function so that $\phi_{\e}(x)=1$ in $B(y_\e^1,r \e^{-1})$ and
$\phi_{\e}(x)=0$ if $x \in B(y_\e^1,2r\e^{-1} )^c$, with $|\nabla
\phi_{\e}| \le C \e$. By multiplying \eqref{eq:nose} by
$\phi_{\e}(x)
\partial_{\nu} u_{\e}$ and integrating, we obtain:
\begin{equation} \label{esteb0} \frac{1}{2} \e  \int_{B(y_\e^1,\e^{-1} r)} \partial_{\nu} V(\e x) u_{\e}^2(x) \,  -
\e  \int_{B(y_\e^1,\e^{-1} r)} \partial_{\nu} \chi(\e x)
[F(u_{\e}(x))- \tilde{F}(u_{\e}(x))]\,  = o(\e).
\end{equation}

If $\chi$ is $C^1(B(\bar y_1,r))$, we divide by $\e$ and pass to
the limit to obtain:
\begin{equation} \label{esteb} \frac{1}{2} \partial_{\nu}V(\bar y_1) \int_{\R^N} U_1^2(x) \,  -
\partial_{\nu} \chi (\bar y_1) \int_{\R^N} [F(U_1(x))- \tilde{F}(U_1(x))]\,  = 0.
\end{equation}

We consider three different cases:

\medskip

{\bf Case 1:} $\bar y_1 \in B_1$.

Take $r>0$ so that $B(\bar y_1,2r) \subset B_1$. By \eqref{esteb},
we get that $\partial_{\nu} V(\bar y_1)=0$. Since $\nu$ is
arbitrary, $\bar y_1$ is a critical point of $V$ in $B_1$, and
therefore $\bar y_1=0$.

\medskip

{\bf Case 2:} $\bar y_1 \in B_2 \setminus \overline{B_1} $.

In this case we will arrive to a contradiction. Take $r>0$ so that
$B(\bar y_1,2r) \subset B_2 \setminus B_1$ and $\nu =
\frac{1}{|\bar y_1|}\bar y_1$. By the definition of $\chi$ (see
\eqref{def:chi}), $
\partial_{\nu} \chi (\bar y_1)= -1/(R_2-R_1)$.

We now use the Pohozaev identity for $U_1$ to get:
\begin{multline*}
\int_{\R^N} \Big ( \frac{N-2}{2} |\nabla U_1|^2 +
\frac{N}{2}V(\bar y_1) U_1^2 \Big) = N \int_{\R^N} \chi(\bar y_1)
F(U_1) + (1- \chi(\bar y_1)) \tilde{F}(U_1)
\\
\qquad \le a \frac{N}{2} \int_{\R^N} U_1^2(x)\,  + N\chi(\bar y_1)
\int_{\R^N} [F(U_1(x))- \tilde{F}(U_1(x))]
 \end{multline*}
 and so
 \[
 c \int_{\R^N} U_1^2 \le \int_{\R^N} [F(U_1(x))- \tilde{F}(U_1(x))]\, .
\]
So, it suffices to take $R_2-R_1$ smaller, if necessary, to get a
contradiction with \eqref{esteb}.

\medskip

{\bf Case 3:} $\bar y_1 \in \partial B_1$.

Also in this case we obtain a contradiction. Indeed, observe that
here $\chi(\bar y_1)=1$, and so $U^1$ is a solution of:
$$ -\Delta U_1 + V(\bar y_1)U _1= f(U_1). $$

Since $J_{\bar y_1}(U_1)=\Phi_{V(\bar y_1)}(U_1)=m$, Lemma
\ref{le:cm} implies that $V(\bar y_1)=1$. By \eqref{elenuzza},
then, there exists $\tau \in \R^N$ tangent to $\partial B_1$ at
$\bar y_1$ such that $\partial_{\tau} V(\bar y_1) \neq 0$.

We now argue as above, with the exception that here $\chi$ is not
$C^1$. However, it is a Lipschitz map so that \eqref{esteb0}
holds: let us choose $r< R_2-R_1$ and $\nu= \tau$. Now we can
write:
\begin{align*}
&\left | \int_{B(y_\e^1, r/\e)} \partial_{\tau} \chi(\e x)
[F(u_{\e}(x))- \tilde{F}(u_{\e}(x))]\,   \right |
\\
&\quad\le \frac{1}{R_2-R_1}\int_{B(0,r/\sqrt \e)}  \left[\frac{|x
\cdot \tau |}{|x+y_\e^1|} +\frac{|y_\e^1 \cdot \tau |}{|x+y_\e^1|}
\right] [F(u_{\e}(x+y_\e^1))- \tilde{F}(u_{\e}(x+y_\e^1))]
\\
&\qquad+\frac{1}{R_2-R_1}\int_{r/\sqrt \e\le |x|\le r/\e}
\frac{|(x+y_\e^1 )\cdot \tau |}{|x+y_\e^1|}  [F(u_{\e}(x+y_\e^1))-
\tilde{F}(u_{\e}(x+y_\e^1))] \, \to 0.
\end{align*}
In the above limit we have used again the dominated convergence
theorem and the strong convergence of $u_\e(\cdot + y_{\e})^1$.
Then, we can divide by $\e$ and pass to the limit in
\eqref{esteb0} to get:
$$
\frac{1}{2} \partial_{\tau} V(\bar y_1) \int_{\R^N} U_1^2(x) \,=0,
$$
a contradiction.

\end{proof}

\begin{proof}[Proof of Theorem \ref{teo}] It suffices to show that
$u_{\e}$ is a solution of \eqref{eq:Sch}. Let us show that indeed
$u_{\e}(x) \to 0$ as $\e \to 0$ uniformly in $x \in (B_1^{\e})^c$.
By Proposition \ref{hola} we obtain:
$$ \|u_{\e}\|_{H^1((B_0^{\e})^c)} \le \|u_{\e} - U(\cdot - y_{\e})\| + \| U(\cdot -
y_{\e})\|_{H^1((B_0^{\e})^c)} \to 0,$$ as $\e \to 0$. By using
local $L^{\infty}$ regularity of $u_\e$, given by standard
bootstrap arguments, we obtain that for any $x \in (B_1^{\e})^c$,
$$ \|u_{\e}\|_{L^{\infty}(B(x,1))} \le C
\|u_{\e}\|_{H^1(B(x,2))}  \le C \|u_{\e}\|_{H^1((B_0^{\e})^c)}\to
0.$$ This concludes the proof.

\end{proof}

\section{Appendix} \label{appendix}

In this section we prove Proposition \ref{sard}, that has been
used in the definition of the truncation (see \eqref{elenuzza}).
Moreover, we will discuss some possible extensions of our results.

\begin{proposition} \label{sard} Let $V: B(0,R)\subset \R^N \to \R$ be a $C^{N-1}$ function
with a unique critical point at $0$, and assume that $V(0)=1$.
Then, the following assertion is satisfied for almost every $R\in
(0, R_0)$:
\begin{equation} \label{elenuzza2} \forall x \in \partial
B(0,R)\mbox{ with }  V(x)=1, \partial_{\tau}V(x) \neq 0, \mbox{
where } \tau \mbox{ is tangent to } \partial B(0,R) \mbox{ at }x.
\end{equation}
\end{proposition}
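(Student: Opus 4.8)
The plan is to deduce \eqref{elenuzza2} from Sard's theorem applied to the radial-distance function restricted to the level surface $\{V=1\}$; the whole point of the hypothesis ``$V\in C^{N-1}$'' is that this is exactly the regularity under which the relevant version of Sard's theorem applies. First I would use that $0$ is the unique critical point of $V$ and that $V(0)=1$: this gives $\n V(x)\neq 0$ for all $x\in B(0,R_0)\setminus\{0\}$, so $1$ is a regular value of $V$ on the punctured ball and $M:=\{x\in B(0,R_0)\setminus\{0\}\ :\ V(x)=1\}$ is a (possibly empty) embedded $C^{N-1}$ hypersurface, with $T_xM=\n V(x)^{\perp}$ at each of its points. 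The next step is a purely linear-algebraic reformulation: for $x\in\partial B(0,R)$ (so $x\neq 0$) with $V(x)=1$, the tangential gradient of $V$ along the sphere vanishes if and only if $\n V(x)$ is parallel to $x$, and — writing $\rho(x)=|x|$, so that $d\rho_x(v)=x\cdot v/|x|$ — this happens precisely when $x$ is a critical point of the restriction $\rho|_M$. Hence the set of ``bad'' radii for which \eqref{elenuzza2} fails is exactly the set of critical values of $\rho|_M$.

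It then remains to invoke Sard's theorem for $\rho|_M\colon M\to\R$. Since $\rho$ is $C^{\infty}$ away from the origin and $M$ is a $C^{N-1}$ manifold of dimension $N-1$, the map $\rho|_M$ is $C^{N-1}$; because $\dim M-\dim\R+1=N-1$, this meets exactly the smoothness requirement of the manifold version of Sard's theorem (for $N=2$ it reduces to Sard for $C^1$ functions of one real variable). One makes this rigorous by covering $M$ with countably many $C^{N-1}$ charts, applying the Euclidean Sard theorem chart by chart, and using that a countable union of null sets is null. The conclusion is that the critical values of $\rho|_M$ form a set of Lebesgue measure zero inside $(0,R_0)$, and therefore for almost every $R\in(0,R_0)$ no point of $\partial B(0,R)$ lying on $\{V=1\}$ can be a critical point of $\rho|_M$; that is, at every such point the tangential part of $\n V$ is nonzero, which is precisely \eqref{elenuzza2}.

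I do not anticipate a genuine obstacle; the one step that needs real care is the bookkeeping in the paragraph above, namely confirming that the differentiability of $V$ actually provided — and no more — suffices to run Sard's theorem for a hypersurface in $\RN$. A minor additional point is that $M$ need not be closed in $B(0,R_0)$ and may accumulate at $0$, but this is harmless: Sard's theorem holds on an arbitrary $C^{N-1}$ manifold, regardless of how it is embedded in the ball.
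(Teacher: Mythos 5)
Your proposal is correct and follows essentially the same route as the paper: identify the level set $\{V=1\}$ away from the origin as a $C^{N-1}$ hypersurface (possible since $0$ is the only critical point), observe that the bad radii are exactly the critical values of $x\mapsto|x|$ restricted to this hypersurface, and conclude by Sard's theorem, whose $C^{N-1}$ regularity requirement is precisely what the hypothesis provides. The only difference is bookkeeping: you cover the punctured-ball level set by countably many charts, while the paper exhausts by annuli $A(0;\delta_n,R_0)$ and takes a countable union of the resulting null sets — the two devices are interchangeable.
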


\begin{proof}

The proof is an easy application of the Sard lemma. Given $\delta
\in (0, R_0)$, let us define the annulus $A=A(0; \delta, R_0)$.
Let us consider the set:
$$M = \{ x \in A \mid V(x)=1\}.$$

If $M$ is empty, we are done. Otherwise, since $V$ has no critical
points in $A$, the implicit function theorem implies that $M$ is a
$N-1$ dimensional manifold with $C^{N-1}$ regularity and a finite
number of connected components; then, we can decompose $M=
\cup_{i=1}^n M_i$, where $M_i$ are connected.

Let us define the maps:
$$\psi_i: M_i \to \R, \ \psi(x)= |x|.$$
Since $M_i$ is a $C^{N-1}$ manifold, we can apply Sard lemma: if
we denote by $S_i\subset (\delta, R_0)$ the set of critical values
of $\psi_i$, then $S_i$ has $0$ Lebesgue measure in $\R$. Define
$S = \cup_{i=1}^n S_i$. It can be checked that for any $R \in
(\delta, R_0) \setminus S$, \eqref{elenuzza2} holds.

Now, it suffices to take $\delta_n \to 0$ and $S^n$ the
corresponding set of critical values. Clearly, $\cup_{n \in \N}
S^n$ has also $0$ Lebesgue measure, and this finishes the proof.

\end{proof}

Now we discuss some slight extensions of our result. As we shall
see, a couple of hypotheses of Theorem \ref{teo} can be relaxed.
However, we have preferred to keep Theorem \ref{teo} as it is,
because in this form the proof becomes more direct and clear. So,
let us now discuss those extensions of our results, as well as the
modifications needed in the proofs.

\bigskip {\bf 1. Condition \ref{f0}.} The $C^1$ regularity of $f(u)$ implies
that all ground states of \eqref{eq:lp} are radially symmetric
(actually, $C^{0,1}$ regularity suffices). However, this is not
really necessary in our arguments. Indeed, in \cite{jb-arma} it is
proved that the set $\mathcal{S}$ is compact, up to translations,
even for continuous $f(u)$. So, in Section \ref{The min-max
argument} it suffices to take $\gamma(t)$ related to $U \in
\mathcal{S}$ such that:
$$ \int_{\R^N} U(x) x \,  =0.$$
Moreover, we cannot use compact embeddings of $H^1_r(\R^N)$ in the
proof of Lemma \ref{le:cm}: the proof of that lemma must be
finished by making use of concentration-compactness arguments.

\bigskip {\bf 2. Condition \ref{V0}.} The lower bound on $V$ is
strictly necessary in our arguments; the upper bound, though it
has been imposed to make many computations, have a clearer form.
Indeed, condition \ref{V0} can be replaced with:

\begin{enumerate}[label=(V0'), ref=(V0')]

\item \label{V0bis} $0<\a_1 \le V(x)  \quad x \in \RN$.

\end{enumerate}

In such case, some technical work is in order. First, we need to
consider the norm:
$$\| u \|_V = \left ( \int_{\R^N} |\n u|^2 + V(x) u^2
\right)^{1/2},$$ and the Hilbert space $H_V$ of functions $u \in
\H$ such that $\|u\|_V$ is finite. Then, it is not obvious that
the solutions $U \in \mathcal{S}$ belong to $H_V$. Therefore, we
need to define a cut-off function $\psi_\e$ such that $\psi_\e = 1
$ in $B_2^\e$, $\psi_\e =0 $ in $(B_3^\e)^c$ and $|\n \psi_\e| \le
C \e$.

The cone $\Ce$ defined in \eqref{def:cono} is to be replaced with
\begin{equation*}
\overline{\Ce}=\left\{ \psi_\e \gamma_t(\cdot-\xi)\;\vline\;
t\in[0,1], \xi\in \overline{B_0^\eps}\cap E \right\}.
\end{equation*}

The estimates that would follow become more technical, but no new
ideas are required.

\end{document}